\documentclass[12pt]{amsart}
\usepackage{amssymb,amsmath, amsthm,latexsym,verbatim}
\usepackage{a4wide}
\usepackage[hypertex]{hyperref}
\usepackage{enumitem}

\theoremstyle{plain}

\newtheorem{lem}{Lemma}[section]
\newtheorem{theo}[lem]{Theorem}
\newtheorem{prop}[lem]{Proposition}
\newtheorem{corollary}[lem]{Corollary}

\parskip=\medskipamount
\font\k=cmr7

  \newcommand {\pr}{\mbox{\k prime}}

  \newcommand {\sym}{\mbox{\k sym}}
  \newcommand {\C}{{\mathbb C}}
  \newcommand {\bH}{{\mathbb H}}
  \newcommand {\N}{{\mathbb N}}
  \newcommand {\R}{{\mathbb R}}
  \newcommand {\Z}{{\mathbb Z}}
  \newcommand {\Q}{{\mathbb Q}}

  \newcommand {\af}{{\mathfrak a}}
  \newcommand {\gf}{{\mathfrak g}}
  \newcommand {\mf}{{\mathfrak m}}
  \newcommand {\kf}{{\mathfrak k}}

  \newcommand {\hf}{{\mathfrak h}}
  
  \newcommand {\nf}{{\mathfrak n}}
  
  \newcommand {\pg}{{\mathfrak p}}

\renewcommand {\H}{{\mathcal H}}
  
  \newcommand {\Co}{{\mathcal C}}

  \newcommand {\E}{{\mathcal E}}

\newcommand  {\cZ}{{\mathcal Z}}
\newcommand {\bs}{\backslash}

\renewcommand{\Re}{\operatorname{Re}}
\newcommand{\Tr}{\operatorname{Tr}}
\newcommand{\Spec}{\operatorname{Spec}}
\newcommand{\End}{\operatorname{End}}

\newcommand{\tr}{\operatorname{tr}}
\newcommand{\Id}{\operatorname{Id}}

\newcommand{\Ind}{\operatorname{Ind}}
\newcommand{\ch}{\operatorname{ch}}
\newcommand{\I}{\operatorname{I}}

\newcommand{\vol}{\operatorname{vol}}

\newcommand{\SL}{\operatorname{SL}}
\newcommand{\GL}{\operatorname{GL}}
\newcommand{\SO}{\operatorname{SO}}
\newcommand{\SU}{\operatorname{SU}}

\newcommand{\Spin}{\operatorname{Spin}}
\newcommand{\Ad}{\operatorname{Ad}}

\renewcommand{\det}{\operatorname{det}}
\newcommand{\gr}{\mathrm{gr}}
\newcommand{\Sym}{\operatorname{Sym}}
\def\OP#1{\operatorname{#1}}

\setcounter{equation}{0}
\setcounter{section}{0}
\parindent=0.3cm

\begin{document}
\title[]
{The asymptotics of the Ray-Singer analytic torsion of hyperbolic 3-manifolds}

\author{Werner M\"uller}
\address{Universit\"at Bonn\\
Mathematisches Institut\\
Beringstrasse 1\\
D -- 53115 Bonn, Germany}
\email{mueller@math.uni-bonn.de}
\keywords{analytic torsion, hyperbolic manifolds, Ruelle zeta function}
\subjclass{Primary: 58J52, Secondary: 11M36}

\begin{abstract}
In this paper we consider the analytic torsion of a closed hyperbolic
3-manifold associated with the $m$-th symmetric power 
of the standard representation of $\SL(2,\C)$   and
we study its asymptotic behavior if $m$ tends to infinity. The leading 
coefficient of the asymptotic formula is given by the volume of the 
hyperbolic 3-manifold. It follows that the Reidemeister torsion
 associated with the
symmetric powers determines the volume of a closed hyperbolic 3-manifold.
\end{abstract}

\maketitle
\setcounter{tocdepth}{1}
\section{Introduction}

Let $X$ be a closed, oriented hyperbolic 3-manifold. Then there
exists a discrete, torsion free, co-compact  subgroup 
$\Gamma\subset \SL(2,\C)$ such that $X=\Gamma\bs\bH^3$, where 
$\bH^3=\SL(2,\C)/\SU(2)$ is the 3-dimensional hyperbolic space. Let $\rho$ be
a finite-dimensional complex representation of $\Gamma$ and let $E_\rho\to X$
be the associated flat vector bundle. Choose a Hermitian fiber metric $h$ on
$E_\rho$. Let $T_X(\rho;g,h)$ denote the Ray-Singer analytic torsion of the
de Rham complex of $E_\rho$-valued differential forms \cite{RS}, 
where $g$ denotes the hyperbolic metric. If $\rho$ is acyclic, then 
$T_X(\rho;g,h)$ is metric independent \cite[Corollary 2.7]{Mu1}. In this
case we denote it by $T_X(\rho)$. 

For $m\in\N$ let $\tau_m=\Sym^m$ be the $m$-th symmetric power of the 
standard representation of $\SL(2,\C)$ on $\C^2$ and denote by $E_{\tau_m}$ the 
flat vector bundle associated to $\tau_m|_\Gamma$. It is well known that 
$H^*(X,E_{\tau_m})=0$. This follows, for example, from
\cite[Chapt. VII, Theorem 6.7]{BW}. Hence the restriction of 
$\tau_m$ to $\Gamma$ is an acyclic representation of $\Gamma$. Denote by 
$T_X(\tau_m)$ the analytic torsion with respect to $\tau_m|_\Gamma$. The  
purpose of this paper is to study the asymptotic behavior of $T_X(\tau_m)$ 
as $m\to\infty$. Our main result is the following theorem.
\begin{theo}\label{asymptotic}
Let $X$ be a closed, oriented hyperbolic 3-manifold. Then we have
\begin{equation}\label{asymp0}
-\log T_X(\tau_m)=\frac{\vol(X)}{4\pi}m^2+O(m)
\end{equation}
as $m\to\infty$.
\end{theo}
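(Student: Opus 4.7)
The plan is to apply the Selberg trace formula to decompose $-\log T_X(\tau_m)$ into an $L^2$-torsion contribution proportional to $\vol(X)$ (which yields the leading $m^2$ term) and a geometric contribution from the nontrivial conjugacy classes in $\Gamma$ (which must be bounded by $O(m)$). Writing the analytic torsion through the Mellin transform
\[
-2\log T_X(\tau_m)=\frac{d}{ds}\bigg|_{s=0}\frac{1}{\Gamma(s)}\int_0^\infty t^{s-1}K(t,\tau_m)\,dt,\qquad K(t,\tau_m):=\sum_{q=0}^3(-1)^q q\,\tr\bigl(e^{-t\Delta_q(\tau_m)}\bigr),
\]
the Selberg trace formula gives $K(t,\tau_m)=\vol(X)\cdot I(t,\tau_m)+H(t,\tau_m)$, where $I$ is the contribution of the identity (the trace of the heat kernel on $\bH^3$) and $H$ collects the orbital integrals over the loxodromic conjugacy classes $[\gamma]$ of $\Gamma$.

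The first step is to compute the identity contribution, which equals $\vol(X)$ times the $L^2$-analytic torsion $t^{(2)}(\tau_m)$ of $\bH^3$. Using the Plancherel formula on $\SL(2,\C)$---whose tempered dual is supported on the principal series parametrized by $(\lambda,k)\in\R\times\tfrac12\Z$ with density proportional to $\lambda^2+k^2$---and the fact that $\Delta_q(\tau_m)$ acts on each principal series by $\lambda^2+c_{q,m}(k)$ with $c_{q,m}(k)$ involving the Casimir eigenvalue of $\tau_m$ (of order $m^2$), the Mellin integral can be evaluated explicitly. Carrying out the alternating sum over $q$ and the summation in $k$ produces a closed form for $t^{(2)}(\tau_m)$, and its asymptotic expansion yields the leading term $\tfrac{\vol(X)}{4\pi}m^2$ of \eqref{asymp0}.

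The second step is to bound the geometric contribution. For a loxodromic $\gamma\in\Gamma$ with complex length $\ell(\gamma)+i\theta(\gamma)$, one has $|\tr(\tau_m(\gamma))|=O\bigl((m+1)e^{m\ell(\gamma)/2}\bigr)$; combined with the Gaussian factor $e^{-\ell(\gamma)^2/(4t)}$ in the hyperbolic heat kernel on $\bH^3$, and with absolute convergence of the Selberg zeta function $Z(s,\tau_m)$ in a right half plane independent of $m$, this shows that the Mellin transform of $H(t,\tau_m)$ extends holomorphically to a neighborhood of $s=0$ with value and derivative at $0$ of size $O(m)$.

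The principal obstacle is the sharp $O(m)$ error on the identity side: extracting the $m^2$ coefficient with a polynomial remainder requires nontrivial cancellations across the alternating sum in $q$ and the discrete $k$-summation, which must eliminate potential $m^2\log m$ or $m\log m$ contributions and leave only a linear error. This subleading analysis of the Plancherel integral is the most delicate point of the argument, and it is what ensures that the implicit constant in $O(m)$ in \eqref{asymp0} is finite rather than logarithmically divergent.
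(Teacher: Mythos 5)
Your proposal takes a genuinely different route from the paper. The paper does not decompose $-2\log T_X(\tau_m)$ into an identity contribution plus a geometric contribution and estimate each side directly; instead it first converts $T_X(\tau_m)$ into the Ruelle zeta value $|R_{\tau_m}(0)|^{1/2}$ via Theorem \ref{theo-ruelle}, factors $R_{\tau_m}(s)=\prod_{k=0}^m R\bigl(s-(m/2-k),\sigma_{m-2k}\bigr)$ into shifted scalar Ruelle zeta functions using \eqref{restrict5}, and then applies the functional equation \eqref{functequ} to the factors whose argument lies in the left half-plane. The $\frac{\vol(X)}{4\pi}m^2$ term emerges as the sum of the exponential prefactors $\exp\bigl(-4\pi^{-1}\vol(X)\,k\bigr)$ coming from the functional equation, while the remaining factors $|R(k,\sigma_{2k})|$ with $k\ge 3$ are controlled by the uniform Euler-product bound of Lemma \ref{bound1}.

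Your sketch, as written, has a gap that would need to be closed. You invoke ``absolute convergence of the Selberg zeta function $Z(s,\tau_m)$ in a right half plane independent of $m$'' to bound the Mellin transform of the orbital contribution $H(t,\tau_m)$. This uniformity claim is false: because $\|\tau_m(\gamma)\|\asymp e^{m\ell(\gamma)/2}$, the abscissa of absolute convergence of the zeta function attached to $\tau_m$ grows like $m/2+2$ (see the discussion around \eqref{ruelle1} and \eqref{decompo1}, where the constant $c_2$ is of order $m/2$ for $\tau=\tau_m$). Consequently one cannot bound the geometric contribution uniformly in $m$ by quoting convergence at a fixed $s$. The paper's essential device is precisely the factorization into $R(s-(m/2-k),\sigma_{m-2k})$, which moves the $m$-dependence into the argument: the half of the factors with positive shift land inside the fixed region $\Re(s)>2$ and contribute $O(1)$, while the other half are handled by the functional equation and produce the $m^2$ term. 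Your plan has no analogue of this device. A second technical obstruction, which the paper itself records after \eqref{anator4}, is that the explicit trace-formula expression carries prefactors $e^{-t\lambda_{\tau,w}^2}$ with $|\lambda_{\tau,w}|\sim m/2$, which obstruct a direct Mellin transform of $K(t,\tau_m)$; the paper circumvents this by introducing the auxiliary operators $\Delta(w)$ of Section \ref{sec-bochlapl}. The ``most delicate point'' you flag — extracting an $O(m)$ error — is exactly where both obstructions bite, and filling the gap would in effect reproduce the paper's zeta-function reorganization or the auxiliary-operator trick.
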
 
We note that there is an analogous result in the holomorphic setting.
In \cite{BV}  Bismut and Vasserot  studied the asymptotic
behavior of the holomorphic Ray-Singer torsion for symmetric powers
of a positive vector bundle.  

Let $\tau_X(\tau_m)$ denote the 
Reidemeister torsion of $X$ with respect to $\tau_m|_\Gamma$ (see \cite{Mu1}).
Then by \cite[Theorem 1]{Mu1} we have $T_X(\tau_m)=\tau_X(\tau_m)$. Thus we 
obtain the following corollary.
\begin{corollary}\label{asymptotic1}
Let $X$ be a closed, oriented hyperbolic 3-manifold. Then we have
\begin{equation}\label{asymp1}
-\log \tau_X(\tau_m)=\frac{\vol(X)}{4\pi}m^2+O(m)
\end{equation}
as $m\to\infty$.
\end{corollary}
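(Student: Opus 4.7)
The plan is to deduce the corollary directly from Theorem~\ref{asymptotic} by substituting the analytic torsion with the Reidemeister torsion. The identification is provided by \cite[Theorem~1]{Mu1}, quoted in the excerpt, which asserts $T_X(\tau_m) = \tau_X(\tau_m)$ for every $m \in \N$.

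Before invoking this identification I would verify its hypotheses. The representation $\tau_m|_\Gamma$ must be acyclic and unimodular: acyclicity, i.e.\ $H^*(X,E_{\tau_m}) = 0$, has already been noted in the excerpt as a consequence of \cite[Chapt.~VII, Theorem~6.7]{BW}; unimodularity is automatic, since $\Sym^m$ of the standard representation of $\SL(2,\C)$ takes values in $\SL(m+1,\C)$, so $\det \tau_m(\gamma) = 1$ for every $\gamma \in \Gamma$. Acyclicity also ensures that $T_X(\tau_m)$ is independent of the choice of Hermitian metric on $E_{\tau_m}$, so that the notation $T_X(\tau_m)$ is unambiguous.

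With these hypotheses in place, the equality $T_X(\tau_m) = \tau_X(\tau_m)$ allows one to replace $-\log T_X(\tau_m)$ by $-\log \tau_X(\tau_m)$ in \eqref{asymp0}, which yields \eqref{asymp1} at once. There is no real obstacle at this step; the corollary is a formal consequence of Theorem~\ref{asymptotic}, and all the substantive mathematical content is carried by that theorem (whose proof remains to be carried out in the body of the paper) together with the Cheeger--M\"uller-type equality between analytic and Reidemeister torsion for acyclic unimodular coefficients established in \cite{Mu1}.
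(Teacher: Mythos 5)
Your proposal is correct and follows the paper's argument exactly: both deduce the corollary from Theorem~\ref{asymptotic} by invoking the equality $T_X(\tau_m)=\tau_X(\tau_m)$ from \cite[Theorem~1]{Mu1}. Your additional check of acyclicity and unimodularity is a sensible verification of the hypotheses of that theorem, which the paper leaves implicit.
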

This result has applications to the cohomology of arithmetic hyperbolic
3-manifolds. We will discuss this elsewhere.
As an immediate corollary we get
\begin{corollary}\label{volume}
Let $X$ be a closed, oriented hyperbolic 3-manifold. Then $\vol(X)$ is
determined by the set of Reidemeister torsions 
$\{\tau_X(\tau_m)\colon m\in\N\}$.
\end{corollary}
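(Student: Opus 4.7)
The plan is to deduce Corollary \ref{volume} directly from Corollary \ref{asymptotic1}, which has already been established in the excerpt. The statement is that the single real number $\vol(X)$ can be reconstructed from the infinite sequence $\{\tau_X(\tau_m)\}_{m\in\N}$, so the proof is essentially a one-line extraction argument.

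Concretely, I would proceed as follows. Starting from
\[
-\log \tau_X(\tau_m) = \frac{\vol(X)}{4\pi} m^2 + O(m) \qquad (m\to\infty),
\]
I would divide both sides by $m^2$ and pass to the limit. Since the error term $O(m)$ satisfies $O(m)/m^2 \to 0$, this yields
\[
\lim_{m\to\infty} \frac{-\log \tau_X(\tau_m)}{m^2} = \frac{\vol(X)}{4\pi}.
\]
Thus
\[
\vol(X) = 4\pi \lim_{m\to\infty} \frac{-\log \tau_X(\tau_m)}{m^2},
\]
and the right-hand side is manifestly a function of the sequence $\{\tau_X(\tau_m)\}_{m\in\N}$ alone. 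This gives the claimed determination.

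There is no genuine obstacle here: all the work has been done in Theorem \ref{asymptotic} and the Cheeger--M\"uller type identification $T_X(\tau_m)=\tau_X(\tau_m)$ invoked in Corollary \ref{asymptotic1}. The only thing one should perhaps note for clarity is that the limit above exists and is finite precisely because of the quadratic leading term with explicit coefficient; without knowing that this leading coefficient is nonzero and explicit, one could not recover $\vol(X)$ from the sequence. Since $\vol(X)>0$ and the coefficient $\frac{1}{4\pi}$ is a universal constant, the inversion is unambiguous. I would write this up in two or three sentences immediately after the statement of the corollary.
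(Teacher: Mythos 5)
Your proof is correct and is exactly the intended one: the paper presents Corollary \ref{volume} as an immediate consequence of Corollary \ref{asymptotic1}, and the extraction $\vol(X)=4\pi\lim_{m\to\infty}\bigl(-\log\tau_X(\tau_m)\bigr)/m^2$ is precisely how one makes that immediacy explicit. Nothing more is required.
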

Some remarks are in order. The Reidemeister torsion of a compact 3-manifold
is known to be a topological invariant \cite{Ch}. Therefore, it follows from 
Corollary \ref{volume} that the volume of a compact, oriented 
hyperbolic 3-manifold is a topological invariant. This is also
a well known consequence of the Mostow-Prasad rigidity theorem \cite{Mo,Pr}.

There are only finitely many closed, oriented hyperbolic 3-manifolds with the
same volume \cite[Theorem 3.6]{T1}. Therefore we get
\begin{corollary}
A compact, oriented hyperbolic 3-manifold $X$ is determined up to finitely 
many possibilities by the set $\{\tau_X(\tau_m)\colon m\in\N\}$
 of Reidemeister torsion invariants. 
\end{corollary}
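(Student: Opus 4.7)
The plan is to chain together two facts already in hand: Corollary \ref{volume}, which extracts $\vol(X)$ from the Reidemeister torsions $\tau_X(\tau_m)$, and Thurston's finiteness theorem \cite[Theorem 3.6]{T1}, which bounds the number of closed, oriented hyperbolic 3-manifolds of fixed volume. The statement is essentially a bookkeeping consequence of these two inputs; no new analytic or geometric estimate is required.

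Concretely, suppose $Y$ is another closed, oriented hyperbolic 3-manifold with $\tau_Y(\tau_m)=\tau_X(\tau_m)$ for every $m\in\N$. First I would invoke Corollary \ref{volume} applied to both $X$ and $Y$ to conclude that $\vol(Y)=\vol(X)$, since each volume is determined by the common sequence of torsions. Then I would apply Thurston's finiteness theorem to the value $\vol(X)$: this places $Y$ in the finite set of isometry classes of closed, oriented hyperbolic 3-manifolds of volume $\vol(X)$. Hence $X$ itself is pinned down among at most finitely many possibilities by the data $\{\tau_X(\tau_m):m\in\N\}$.

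There is no genuine obstacle here, only the question of whether both inputs are on firm ground. Corollary \ref{volume} has just been established (via the asymptotic formula of Theorem \ref{asymptotic} together with the identification $T_X(\tau_m)=\tau_X(\tau_m)$ of analytic and Reidemeister torsion), and Thurston's finiteness theorem is classical. Combining them delivers the corollary immediately, with no further work required.
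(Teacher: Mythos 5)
Your proof is correct and follows exactly the paper's reasoning: the paper derives this corollary precisely by combining Corollary \ref{volume} (the volume is determined by the Reidemeister torsions) with Thurston's theorem \cite[Theorem 3.6]{T1} on the finiteness of closed oriented hyperbolic 3-manifolds of a given volume. Nothing is missing and nothing is different.
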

It is known \cite{Zi} that the number of closed hyperbolic manifolds with a
given volume can be arbitrarily large. Therefore the proof of the corollary
does not give a uniform bound on the number of closed hyperbolic manifolds with
the same set of Reidemeister torsion invariants.

Our approach to prove Theorem \ref{asymptotic} is based on the expression of 
$T_X(\tau_m)$ in
terms of the twisted Ruelle zeta function attached to $\tau_m$. Recall that
for a finite-dimensional complex representation $\rho$ of $\Gamma$ the twisted 
Ruelle zeta function $R_\rho(s)$ is defined for $\Re(s)\gg 0$ as the infinite
product
\begin{equation}
R_\rho(s)=\prod_{\substack{[\gamma]\neq e\\\pr}}
\det\left(\Id-\rho(\gamma)e^{-s\ell(\gamma)}\right),
\end{equation}
where $[\gamma]$ runs over the nontrivial primitive conjugacy classes of
$\Gamma$ and $\ell(\gamma)$ denotes the length of the unique closed
geodesic associated to $[\gamma]$. It follows from \cite{Fr2} that 
$R_\rho(s)$ admits a meromorphic
extension to the whole complex plane. If $\rho$ is unitary and acyclic
then  $R_\rho(s)$ is regular at $s=0$ and its value at zero satisfies
\[
|R_\rho(0)|=T_X(\rho)^2
\] 
(see \cite{Fr}). For an arbitrary unitary representation (which is not
necessarily acyclic), the coefficient of the leading term of the 
Laurent expansion of 
$R_\rho(s)$ at $s=0$ is given by the analytic torsion. The corresponding 
result holds for any compact, oriented hyperbolic manifold of dimension $n$ 
\cite{Fr}. In his thesis \cite{Br} U. Br\"ocker has established a similar 
result for representations of the fundamental group that are restrictions of 
finite-dimensional irreducible representations of the isometry group 
$\SO_0(n,1)$ of the hyperbolic $n$-space. Unfortunately, his method is
based on elaborate computations which are difficult to verify. This problem
has been rectified by Wotzke in his thesis \cite{Wo}. He gave a different proof
which replaces Br\"ocker's explicite computations by the real version of 
Kostant's Bott-Borel-Weil theorem \cite{Si}.

To state the result for $n=3$ we need to introduce some notation. 
Let $\tau$ be a finite-dimensional, irreducible representation of $\SL(2,\C)$,
which we regard as real Lie group. Let $\theta$ be the Cartan involution of 
$\SL(2,\C)$ with respect to $SU(2)$. Put $\tau_\theta=\tau\circ\theta$. 
Denote by $R_\tau(s)$ the twisted Ruelle zeta function for the restriction of 
$\tau$ to $\Gamma$. Let $E_\tau\to X$ be the flat vector bundle associated to
$\tau|_\Gamma$.  The flat bundle $E_\tau$ can be equipped with a canonical 
Hermitian fiber metric \cite{MM}. Let $\Delta_p(\tau)$ be the corresponding
Laplacian on $E_\tau$-valued $p$-forms and denote by $T_X(\tau)$ the Ray-Singer
analytic torsion associated to $\tau|_\Gamma$. Then the main result of 
\cite{Wo} for $n=3$ is the following theorem.
\begin{theo}\label{theo-ruelle}
Let $\tau$ be a  finite-dimensional, irreducible representation of 
the real Lie group $\SL(2,\C)$. Then we have\\
$\mathrm{1)}$ If $\tau_\theta\ncong\tau$, then $R_\tau(s)$ is regular at $s=0$ 
and
\[
|R_\tau(0)|=T_X(\tau)^2.
\]
$\mathrm{2)}$ Let $\tau_\theta=\tau$. If $\tau\neq 1$, then the order 
$h(\tau)$ of $R_\tau(s)$ at $s=0$ is given by 
\begin{equation}\label{order5}
h(\tau)=2\sum_{p=1}^3(-1)^p\dim\ker\Delta_p(\tau).
\end{equation}
and if $\tau=1$, the order equals $4-2\dim H^1(X,\R)$.
The leading term of
the Laurent expansion of $R_\tau(s)$ at $s=0$ is given by
\[
T_X(\tau)^2 s^{h(\tau)}.
\]
\end{theo}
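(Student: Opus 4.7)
The plan is to follow the strategy of Fried \cite{Fr} and its extension by Br\"ocker \cite{Br} and Wotzke \cite{Wo}, decomposing the Ruelle zeta function as an alternating product of Selberg zeta functions indexed by certain irreducible representations of $K=\SU(2)$, and then reading off the local behavior at $s=0$ from the spectral data of the constituent factors.

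The first step is to establish a factorization of the form
\begin{equation*}
R_\tau(s)=\prod_{\sigma}Z(s+c(\sigma),\sigma,\tau)^{m(\sigma)},
\end{equation*}
where $\sigma$ runs through certain irreducible unitary representations of $K$, the exponents $m(\sigma)\in\Z$ are signs, $c(\sigma)\in\R$ are real shifts, and $Z(s,\sigma,\tau)$ denotes the twisted Selberg zeta function attached to the pair $(\sigma,\tau|_\Gamma)$. Such a factorization arises by applying the Euler-Poincar\'e principle to the Chevalley-Eilenberg complex of the nilpotent subalgebra $\nf\subset\gf$ with coefficients in the representation space of $\tau$: the logarithm of $R_\tau(s)$ rewrites as a sum over conjugacy classes weighted by the character of $H^*(\nf,V_\tau)$ as an $M$-module, where $M=\U(1)$ is the centralizer of a split torus in $K$. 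The essential input is the real version of Kostant's Bott-Borel-Weil theorem in the formulation of Silberger \cite{Si}, which computes this $M$-module structure in a closed form; this structural identification replaces the explicit case-by-case calculations of Br\"ocker.

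The second step analyzes each factor $Z(s+c(\sigma),\sigma,\tau)$ near $s=0$. By the Selberg trace formula, the zeros of $Z(s,\sigma,\tau)$ are located at points determined by the spectrum of a twisted Bochner-Laplace operator on sections of the homogeneous vector bundle associated to $\sigma$ tensored with $E_\tau$, and the order at each zero is the multiplicity of the corresponding eigenvalue. In case 1), the assumption $\tau_\theta\ncong\tau$ implies $H^*(X,E_\tau)=0$, from which one deduces that the signed orders of the factors cancel, so $R_\tau(s)$ is regular at $s=0$. In case 2), a Hodge-theoretic identification of the relevant eigenspaces with $\ker\Delta_p(\tau)$ allows one to sum the signed orders and arrive at \eqref{order5}; for the trivial representation $\tau=1$ the extra contribution $4-2\dim H^1(X,\R)$ to the order comes from the kernels of the $0$- and $3$-form Laplacians together with Poincar\'e duality.

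The third step computes the leading coefficient. Starting from
\begin{equation*}
-2\log T_X(\tau)=\sum_{p=0}^{3}(-1)^p p\,\zeta_p'(0,\tau),
\end{equation*}
one writes each derivative $\zeta_p'(0,\tau)$ as a logarithmic derivative of an appropriate product of Selberg zeta functions at the points $s=c(\sigma)$, invoking the functional equations of $Z(s,\sigma,\tau)$ and the spectral side of the trace formula. Assembling these contributions according to the factorization above identifies $T_X(\tau)^2$ as the leading coefficient of $R_\tau(s)$ at $s=0$ in both cases. The principal obstacle is the bookkeeping: keeping track of the signs $m(\sigma)$ and the shifts $c(\sigma)$, and matching derivatives of the Selberg zeta functions against the derivatives of the spectral $\zeta$-functions. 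Silberger's version of Kostant's theorem is precisely the device that organizes this bookkeeping, which is why Wotzke's proof is cleaner than Br\"ocker's direct computation.
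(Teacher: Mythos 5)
Your high-level strategy matches the paper's: use Kostant's real Bott--Borel--Weil theorem (via Silhan \cite{Si}) to factor $R_\tau(s)$ into an alternating product of Selberg zeta functions, identify the zeros of those factors with the spectrum of Bochner--Laplace type operators, and then obtain the order and leading coefficient via a Mellin-transform/regularized-determinant computation. However, there are several genuine gaps.

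The most serious issue is in case 1). You claim that ``$H^*(X,E_\tau)=0$ implies the signed orders of the factors cancel, so $R_\tau(s)$ is regular at $s=0$.'' This skips the essential spectral input. The operators whose kernels govern the order of the Selberg factors at $s=0$ are the $\Delta(w)=A(\sigma_{\tau,w})+\lambda_{\tau,w}^2$ for $w\in W_G$, and the paper must prove the much stronger statement $\ker\Delta(w)=\{0\}$ for every $w$ (Lemma \ref{kernel}), via a spectral-gap argument (Lemma \ref{spectrum}): $A(\sigma)\ge -1$ in general, $A(\sigma_k)>-1$ for $k\notin\{0,\pm 2\}$, plus exclusion of the complementary series for odd $\sigma_k$ by Frobenius reciprocity. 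Without this, the $t\to\infty$ limit of $\Tr_s(e^{-t\Delta(w)})$ need not even converge, so the identity relating $K(t,\tau)$ to the alternating sum of $\Tr_s(e^{-t\Delta(w)})$ cannot be pushed to $t\to\infty$. Moreover, even if one granted convergence, ``the alternating sum of graded kernel dimensions vanishes'' is strictly weaker than ``each kernel vanishes,'' and only the latter allows one to take the Mellin transform directly and obtain the clean identity $T_X(\tau)^4=\prod_w\det_{\gr}(\Delta(w))^{(-1)^{\ell(w)+1}}$, and to evaluate $\det_{\gr}(s^2-2s\lambda_{\tau,w}+\Delta(w))$ at $s=0$ without encountering a zero or pole. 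You also omit the conjugation step $\overline{R_\tau(s)}=R_{\tau_\theta}(\overline s)$, which is what converts the statement about $R_\tau(s)R_{\tau_\theta}(s)$ into one about $|R_\tau(0)|$ alone.

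A second, smaller, issue: you describe the constituent factors as twisted Selberg zeta functions $Z(s,\sigma,\tau)$ ``attached to the pair $(\sigma,\tau|_\Gamma)$'' and speak of Bochner--Laplace operators on bundles ``tensored with $E_\tau$.'' The paper's approach is deliberately cleaner: via the Matsushima--Murakami isomorphism $E_\tau\cong(\Gamma\backslash G\times V_\tau)/K$, the $\tau$-twisting is absorbed entirely into the shift by the Casimir eigenvalue $\tau(\Omega)$ and the shifted arguments $s-\lambda_{\tau,w}$, so the Selberg zeta functions that appear are only $\sigma$-twisted for $\sigma\in\hat{M}$, and the auxiliary operators $A_\nu$ live on the plain homogeneous bundles $E_\nu$, $\nu\in\hat{K}$, without any tensoring by $E_\tau$. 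This distinction matters because it is what makes the (untwisted) Selberg trace formula and the determinant formula of \cite{BO} directly applicable. Finally, for the leading coefficient you invoke functional equations of the Selberg zeta function, but the actual engine of the computation is the determinant formula (Proposition in Section \ref{detformula}) expressing $S(s,\sigma)$ and $Z(s,\sigma_0)$ as graded (resp.\ ordinary) regularized determinants of $A(\sigma)+s^2$; the functional equation is needed for the asymptotic Theorem \ref{asymptotic}, not for Theorem \ref{theo-ruelle}.
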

The case of the trivial representation is covered by \cite{Fr}. In this case 
the order of $R(s)$ at $s=0$ differs from \eqref{order5}.

It follows from Theorem \ref{theo-ruelle} that in order to prove Theorem 
\ref{asymptotic} it suffices to analyze the asymptotic behavior of 
$R_{\tau_m}(0)$ as $m\to\infty$. For this purpose we consider another type of 
twisted Ruelle 
zeta functions. Let $A$ be the standard split torus of $\SL(2,\C)$ and let
$M$ be the centralizer of $A$ in $\SU(2)$ (see \eqref{tori} for the explicit
description). For $\sigma\in\hat M$ let $R(s,\sigma)$ be the Ruelle zeta
function defined by \eqref{ruellezeta}. Using the decomposition of $\tau_m$ 
under the Cartan subgroup $MA$, it follows that $R_{\tau_m}(s)$ is the product 
of the twisted Ruelle zeta functions with shifted argument 
$R(s-(m/2-k),\sigma_{m-2k})$, $k=0,...,m$. This reduces the study of 
the  asymptotic behavior 
of $T_X(\tau_{2m})$ (resp. $T_X(\tau_{2m+1})$) as $m\to\infty$ to the study of 
the behavior of $|R(k,\sigma_{2k})|$ and $|R(-k,\sigma_{2k})|$, 
(resp. $|R(k+1/2,\sigma_{2k+1})|$ and 
$|R(-k-1/2,\sigma_{-(2k+1})|$), $k>2$, as $k\to\infty$. 
To analyze the behavior of $|R(k,\sigma_{2k})|$ (resp. 
$|R(k+1/2,\sigma_{2k+1})|$) as $k\to\infty$ we simply
use the infinite product defining it. To deal with the remaining cases we use
the functional equation which implies
\[
|R(-s,\sigma_k)|=\exp\left(-4\pi^{-1}\vol(X)\Re(s)\right)|R(s,\sigma_{-k})|.
\]
This is exactly how the volume of $X$ appears in the asymptotic formula 
(\ref{asymp0}). 

For the sake of completeness we include a proof of Theorem \ref{theo-ruelle}
which is based on results of \cite{BO}. The starting point of the method is the
observation that the flat bundle $E_\tau\to X$ is isomorphic to the locally
homogeneous vector bundle defined by the restriction of $\tau$ to the maximal
cocompact subgroup $\SU(2)$ of $\SL(2,\C)$ (see \cite[Propostion 3.1]{MM}).
Using this isomorphism the bundle $E_\tau$ can be equipped with a canonical
Hermitian fiber metric induced from an invariant  metric on the corresponding
homogeneous vector bundle \cite[Lemma 3.1]{MM}. We define the Laplacian 
$\Delta_p(\tau)$ on $E_\tau$-valued $p$-forms with respect to this metric on
$E_\tau$. Then, up to a constant, $\Delta_p(\tau)$ equals $-R(\Omega)$, where
$R(\Omega)$ denotes the action of the Casimir operator on sections of the 
locally homogeneous bundle. This is the key fact which allows us to apply
the Selberg trace formula to the heat kernel.

The paper is organized as follows. In section \ref{prelim} we summerize some 
basic facts
about hyperbolic 3-manifolds and analytic torsion. In section \ref{sec-ruelle}
we consider twisted Ruelle and Selberg zeta functions and establish
some of their basic properties. In section \ref{sec-bochlapl}
we introduce certain auxiliary elliptic operators which are needed to derive
the determinant formula and to prove the functional equation for the Selberg
zeta function. In the next section \ref{sect-funcequ} we establish the 
functional equation for the Selberg and Ruelle zeta functions. 
In section \ref{detformula} we use the determinant formula of
\cite{BO} to express the twisted Ruelle zeta function as a ratio of products
of regularized determinants of the  elliptic operators introduced in section
\ref{sect-funcequ}. The determinant formula is one of main tools to study the 
leading term of the
Laurent expansion of $R_\tau(s)$ at $s=0$. In section \ref{heattrace} we give 
a proof of Theorem \ref{theo-ruelle}. In the final section \ref{mainth} 
we proof Theorem \ref{asymptotic}.

\subsection*{Acknowledgement} I would like to thank Jonathan Pfaff for a
careful reading of the manuscript and for pointing out some minor mistakes.

\section{Preliminaries}\label{prelim}
\setcounter{equation}{0}

\subsection{}

Let $G=\SL(2,\C)$ and $K=\SU(2)$. Then $K$ is a maximal compact subgroup of 
$G$. We regard $G$ as real Lie group and we recall that $G$ is isomorphic to
$\Spin_0(3,1)$ \cite{Be}. Under this isomorphism, $\SU(2)$ is mapped
to $\Spin(3)$. Thus $G$ acts on the hyperbolic 3-space $\bH^3$ and 
$\bH^3\cong G/K$. Let $\Gamma\subset G$ be a cocompact, torsion free discrete
subgroup. Then $X=\Gamma\bs \bH^3$ is a compact, oriented hyperbolic 3-manifold
and any such manifold is of this form.  

Let $G=NAK$ be the standard Iwasawa decomposition of $G$ and
let $M$ be the centralizer of $A$ in $K$. Then
\begin{equation}\label{tori}
A=\left\{\begin{pmatrix}\lambda&0\\ 0&\lambda^{-1}\end{pmatrix}\colon\lambda\in
\R^+\right\},\quad M=\left\{\begin{pmatrix} e^{i\theta}&0\\ 0& e^{-i\theta}
\end{pmatrix}\colon\theta\in [0,2\pi]\right\}.
\end{equation}
We use the natural normalization of the Haar measures for $A,N,K$ and $G$ as in 
\cite[pp. 387-388]{Kn}. In particular, we choose on $K$ the Haar measure $dk$
of total mass $1$. 

Let $\gf$, $\kf$, $\af$, $\mf$ and $\nf$ denote the Lie algebras of 
$G$, $K$, $A$, $M$  and $N$, respectively.  Let
\begin{equation}\label{cartan}
\gf=\kf\oplus \pg
\end{equation}
be the Cartan decomposition of $(\gf,\af)$. Then $\af$ is a maximal abelian 
subspace of $\pg$. Let $\alpha$ be the unique positive root of $(\gf,\af)$. 
Let $H\in\af$ be such that $\alpha(H)=1$. Let $\af^+\subset\af$ be 
the positive Weyl chamber and let $A^+=\exp(\af^+)$. Let $W:=W(\gf,\af)$ 
denote the restricted Weyl group.

Put $\hf=\mf\oplus\af$. Then $\hf$ is a Cartan subalgebra 
of $\gf$. We identify $\hf$ with $\R^2$. Then the
Weyl group $W_G$ of $(\gf_\C,\hf_\C)$ acts on $\C^2$ by 
sign changes. So $W_G$ has order $4$. In a compatible 
ordering on $\hf^*_\C$ the only positive roots of the pair $(\gf_\C,\hf_\C)$
are $\alpha_1$ and $\alpha_2$ where $\alpha_1(H)=\alpha_2(H)=\alpha(H)=1$
and  $\alpha_1(iH)=-\alpha_2(iH)=i$. Let $\rho_G=\frac{1}{2}
(\alpha_1+\alpha_2)$. 

Let $B$ be the Killing form of $\gf$. Define a symmetric bilinear form on $\gf$
by
\begin{equation}\label{biform}
\langle Y_1,Y_2\rangle =\frac{1}{4}B(Y_1,Y_2),\quad Y_1,Y_2\in\gf.
\end{equation}  
Then $\langle\cdot,\cdot\rangle$ is positive definite on $\pg$, negative 
definite on $\kf$ and we have
$\langle\kf,\pg\rangle=0$. The normalization is such that the restriction of 
$\langle\cdot,\cdot\rangle$ to $\pg\cong T_{eK}(G/K)$ induces the $G$-invariant
Riemannian metric on $\bH^3=G/K$ which has constant curvature $-1$.

Let $\{Z_i\}$ be a basis of $\gf$ and let $\{Z^j\}$ be the basis of $\gf$
which is determined by $\langle Z_i,Z^j\rangle=\delta_{ij}$. Then the Casimir
element $\Omega\in\cZ(\gf_\C)$ is given by
\begin{equation}
\Omega=\sum_i Z_iZ^i.
\end{equation}
Let $R(\Omega)$ be the differential operator induced by $\Omega$ on 
$C^\infty(\bH^3)$. Then by Kuga's lemma we have
\[
R(\Omega)=-\Delta,
\]
where $\Delta$ is the hyperbolic Laplace operator on functions.

\subsection{}

Let $\Gamma\subset G$ be a discrete, torsion free, cocompact subgroup. Then
$X=\Gamma\bs\bH^3$ is a closed hyperbolic manifold. Given $\gamma\in \Gamma$,
we denote by $[\gamma]$ the $\Gamma$-conjugacy class of $\gamma$. The set of 
all
conjugacy classes of $\Gamma$ will be denoted by $C(\Gamma)$. Let 
$\gamma\not=1$. Then there exist $g\in G$, $m_\gamma\in M$, and 
$a_\gamma\in A^+$ such that
\begin{equation}\label{conjug}
g\gamma g^{-1}=m_\gamma a_\gamma.
\end{equation}
By \cite[Lemma 6.6]{Wa1}, $a_\gamma$ depends only on $\gamma$ and $m_\gamma$ 
is determined up to conjugacy in $M$.  By definition
there exists $\ell(\gamma)>0$  such that
\begin{equation}\label{length}
a_\gamma=\exp\left(\ell(\gamma)H\right).
\end{equation}
Then $\ell(\gamma)$ is the length of the unique closed geodesic in $X$ 
that corresponds to the conjugacy class $\{\gamma\}$. An element 
$\gamma\in\Gamma-\{e\}$ is called primitive, if it can not be written as 
$\gamma=\gamma_0^k$ for some $\gamma_0\in\Gamma$ and $k>1$. For every 
$\gamma \in\Gamma-\{e\}$ there exist a unique primitive element $\gamma_0\in
\Gamma$ and $n_\Gamma(\gamma)\in\N$ such that $\gamma=\gamma_0^{n_\Gamma(\gamma)}$.

\subsection{}

Denote by $\hat M$ the set of unitary characters of $M$. Then $\hat M\cong\Z$
and the character $\sigma_k$ that corresponds to $k\in\Z$ is given by
\begin{equation}\label{uncharac}
\sigma_k\left(\begin{pmatrix} e^{i\theta}&0\\ 0& e^{-i\theta}
\end{pmatrix}\right)=e^{ik\theta}.
\end{equation}
The finite-dimensional irreducible representations of $G$, regarded as real 
Lie group, are parametrized by pairs of nonnegative integers \cite[p. 32]{Kn}. 
For $m\in\N_0$
let 
\[
\tau_m=\Sym^m\colon G\to\GL(S^m(\C^2))
\]
be the $m$-th symmetric power of the standard representation of $G=\SL(2,\C)$
on $\C^2$. Denote by $\overline \tau_m$ the complex conjugate representation.
Then
\begin{equation}\label{sl2irred}
\tau_{m,n}=\tau_m\otimes\overline\tau_n
\end{equation}
is the irreducible representation with highest weight $(m,n)$.  
The restrictions of $\tau_m$  to $MA$ decomposes as follows:
\begin{equation}\label{restrict5}
\tau_m\big|_{MA}=\bigoplus_{k=0}^m\sigma_{m-2k}\otimes e^{(\frac{m}{2}-k)\alpha}.
\end{equation}

\subsection{}

Let $P=MAN$ be the standard parabolic subgroup of $G$. We identify $\C$ with
$\af_\C^*$ by $z\to zH$. 
For $n\in\Z$ and $\lambda\in\C$ let $\pi_{n,\lambda}$ be the induced 
representation
\begin{equation}
\pi_{n,\lambda}=\Ind_P^G(\sigma_n\otimes e^{i\lambda}\otimes 1). 
\end{equation}
Note that this is the parametrization of the principal series used in 
\cite[Chapt. XI, \S2]{Kn}. The representation $\pi_{n,\lambda}$  
acts in the Hilbert space $\H_{n,\lambda}$ whose subspace of 
$C^\infty$-vectors is given by
\begin{equation}
\H^\infty_{n,\lambda}=\left\{f\in C^\infty(G,V_\sigma)\colon f(gman)=
e^{-(i\lambda+1)(\log a)}\sigma(m)^{-1}f(g),\quad g\in G,\;man\in P\right\}.
\end{equation}
If $\lambda\in\R$, then $\pi_{n,\lambda}$ is unitary. This family of 
representations is the unitary principal series. All $\pi_{n,\lambda}$, $n\in\Z$,
$\lambda\in\R\setminus\{0\}$, are irreducible. They have an  
explicit realization \cite[Chapt. II, \S4]{Kn}. The Casimir eigenvalue 
$\pi_{n,\lambda}(\Omega)$ is given by
\begin{equation}\label{ucasimir}
\pi_{n,\lambda}(\Omega)=-\lambda^2+\frac{n^2}{4}-1.
\end{equation} 
This follows from \cite[Theorem 8.22]{Kn}. It can be also verified using the 
explicit realization of $\pi_{n,\lambda}$. In the latter case one has to take
into account that the identification of $\af_\C$ with $\C$ is differnt from 
ours.

The nonunitarily induced representations
\begin{equation}
\pi_x^c=\Ind_P^G(1\otimes e^{x}\otimes 1),\quad 0<x<2,
\end{equation}
are unitarizable. This is the complementary series. The Casimir eigenvalue is
given by
\begin{equation}\label{casimir8}
\pi_x^c(\Omega)=x^2-1.
\end{equation}
This also follows from \cite[Theorem 8.22]{Kn} or can be verified using the 
explicit realization of $\pi_x^c$ (see \cite[Chapt. II, \S4]{Kn}). Denote by
$\Theta_{n,\lambda}=\tr\pi_{n,\lambda}$ the character of $\pi_{n,\lambda}$.
 
\subsection{}

Let $\tau\colon G\to \GL(V_\tau)$ be an irreducible finite-dimensional 
representation of $G$. 
Let  $E_{\tau}$ be the flat vector bundle associated to the
restriction $\tau|_\Gamma$ of $\tau$ to $\Gamma$. By 
\cite[Proposition 3.1]{MM} $E_\tau$ is canonically isomorphic to the locally 
homogeneous vector bundle associated to $\tau|_K$, i.e.,
\begin{equation}\label{iso}
E_{\tau}\cong (\Gamma\bs G\times V_\tau)/K,
\end{equation}
where $K$ acts on $\Gamma\bs G\times V_\tau$ by $(\Gamma g,v)\cdot k=
(\Gamma gk,\tau(k)^{-1}v)$. 
So we may regard $E_\tau$ as locally homogeneous vector bundle equipped with
a flat connection which, of course, is different from the canonical invariant
connection on the homogeneous bundle.

The vector bundle $E_\tau$ can be equipped with a canonical fiber metric. By 
\cite[Lemma 3.1]{MM} there exists an inner product $\langle\cdot,\cdot\rangle$
on $V_\tau$ which satisfies 
\begin{equation}
\begin{split}
(a)\qquad \langle\tau(Y)u,v\rangle&=-\langle u,\tau(Y)v\rangle\text{ for all
$Y\in\mathfrak k$, $u,v\in V_\tau$;}\\
(b)\qquad \langle\tau(Y)u,v\rangle&=\langle u,\tau(Y)v\rangle
\mspace{13mu}\text{ for all $Y\in\mathfrak p$, $u,v\in V_\tau$.} 
\end{split}
\end{equation}
Such an inner product is called {\it admissible}. It is unique up to scaling.
By (a) the inner product is invariant under $\tau(K)$ and therefore, it defines
via \eqref{iso} a Hermitian fiber metric in $E_\tau$. Denote by $\Delta_p(\tau)$
the Laplacian on $E_\tau$-valued $p$-forms with respect to an admissible metric
on $E_\tau$. 

\subsection{}

Let $P$ be an elliptic differential operator acting on 
$C^\infty$-sections of a smooth Hermitian vector bundle $E$ over a compact 
Riemannian manifold $X$. The metrics $g$ on $X$  and $h$ on $E$ induce an 
inner product 
in $C^\infty(X,E)$. Suppose that with respect to this inner product 
the operator $P$ is symmetric and nonnegative. Then the zeta function 
$\zeta(s;P)$, $s\in\C$, of $P$ is defined as
\[
\zeta(s;P)=\sum_{\lambda\in\Spec(P)\setminus\{0\}} m(\lambda)\lambda^{-s},
\]
where $m(\lambda)$ denotes the multiplicity of the eigenvalue $\lambda$.
The series converges absolutely and uniformly on compact subsets of 
$\Re(s)>\dim(X)/\mathrm{ord}(P)$. Moreover $\zeta(s;P)$ admits a meromorphic 
extension to $s\in\C$ which is holomorphic at $s=0$ 
(see \cite[Chapt. II]{Sh}). Then the regularized determinant $\det P$ of $P$ 
is defined as
\begin{equation}\label{regdet}
\det P=\exp\left(-\frac{d}{ds}\zeta(s;P)\big|_{s=0}\right).
\end{equation}
Assume that $P$ is symmetric and bounded from
below. Let $\lambda\in\R$ be such that
$P+\lambda>0$. Then $\det(P+\lambda)$ is defined by \eqref{regdet}. 
Voros \cite{Vo} has shown that the function 
$\lambda\mapsto \det(P+\lambda)$, defined for $\lambda\gg0$,  extends to
an entire function $\det(P+s)$ of $s\in\C$  with zeros $-\lambda_j$ where
$\lambda_j\in\Spec(P)$.

\subsection{}

Finally we recall the definition of the Ray-Singer analytic torsion \cite{RS}.
Let $\chi$ be a finite-dimensional representation 
of $\pi_1(X)$ and let $E_\chi\to X$ be the associated flat vector bundle over 
$X$. Pick a Hermitian fiber 
metric $h$ in $E_\chi$ and let $\Delta_{p}(\chi)\colon
\Lambda^p(X,E_\chi)\to \Lambda^p(X,E_\chi)$ be the Laplacian on the space of 
$E_\chi$-valued $p$-forms. Then $\Delta_{p}(\chi)$ is a
nonnegative,  second order elliptic differential operator. So it has a well
defined  regularized determinant, defined by \eqref{regdet}. Then the
analytic torsion is defined as the following weighted product of regularized 
determinants
\begin{equation}\label{anator1}
T_X(\chi;g,h)=\prod_{p=1}^3\left(\det \Delta_p(\chi)\right)^{(-1)^{p+1}p/2}.
\end{equation}
By definition $T_X(\chi;g,h)$ depends on $g$ and $h$. However, if $\dim X$ is
odd and $\chi$ is acyclic, 
i.e., $H^*(X,E_\chi)=0$, then $T_X(\chi;g,h)$ is independent of $g$ and $h$ 
\cite[Corollary 2.7]{Mu1}. In this case we denote it simply by $T_X(\chi)$.

In this paper we consider the special case where $X=\Gamma\bs\bH^3$ is a closed
hyperbolic 
3-manifold and $\chi$ is the restriction of a representation 
$\tau\colon G\to \GL(V_\tau)$ to $\Gamma$. Then, as explained above,
 the flat bundle $E_\tau$ carries an admissible metric. 
We denote the analytic torsion attached to $\tau|_\Gamma$ with respect to this
metric by $T_X(\tau)$.

\section{Twisted Ruelle and Selberg zeta functions}\label{sec-ruelle}
\setcounter{equation}{0}

In this section we consider various kinds of twisted geometric zeta functions
which are needed for the proof of our main result. We will use the notation
introduced in section \ref{prelim}.
First we recall the following estimation of the growth of the length spectrum.
For $R>0$  we have
\begin{equation}\label{growth}
\#\big\{[\gamma]\in C(\Gamma)\colon \ell(\gamma)\le R\big\}\ll e^{2R}
\end{equation}
\cite[(1.31)]{BO}.

If $T$ is an endomorphism of a finite-dimensional vector
space, denote by $S^kT$ the $k$-th symmetric power of $T$. Let 
$\overline\nf=\theta\nf$ be the negative root space. Then for 
$\sigma\in\hat M$ and $s\in\C$ 
with $\Re(s)>2$ the twisted Selberg zeta function is defined by
\begin{equation}\label{selzeta}
Z(s,\sigma)=\prod_{\substack{[\gamma]\not=e\\\pr}}\prod_{k=0}^\infty
\det\left(1-\left(\sigma(m_\gamma)\otimes S^k
\left(\Ad(m_\gamma a_\gamma)_{\overline\nf}\right)\right)
e^{-(s+1)\ell(\gamma)}\right),
\end{equation}
where $[\gamma]$ runs over the non-trivial primitive conjugacy classes in 
$\Gamma$. By \cite[(3.6)]{BO} we have
\begin{equation}\label{logzeta}
\log Z(s,\sigma)=-\sum_{[\gamma]\neq e}\frac{\sigma(m_\gamma)e^{-\ell(\gamma)}}
{\det(\Id-\Ad(m_\gamma a_\gamma)_{\overline{\nf}})\mu(\gamma)}\,e^{-s\ell(\gamma)}.
\end{equation}
It follows from
\eqref{growth} that the series converges absolutely and
uniformly in the half-plane $\Re(s)>2$. Therefore the infinite product 
converges absolutely and
uniformly in the half-plane $\Re(s)>2$.  Furthermore
by \cite[Theorem 3.15]{BO} it has a meromorphic extension to the entire 
complex plane and satisfies a functional equation \cite[Theorem 3.18]{BO}. 
To state the functional equation we need some notation.  Let $w\in W_A$ be the
non-trivial element. Then $w$ acts on $\hat M$ by
\[
(w\sigma)(m)=\sigma(m_w^{-1}m m_w),\quad m\in M,\, \sigma\in\hat M,
\]
where $m_w$ is a representative of $w$ in the normalizer of $\af$ in $K$. Thus
$w\sigma_k=\sigma_{-k}$, $k\in\Z$. For each $\sigma\in\hat M$ there is an
associated Dirac operator $D_X(\sigma)$ acting in a Clifford bundle 
$E_\sigma\to X$ \cite[p.29]{BO}. Let $\eta(D_X(\sigma))$ denote the eta 
invariant of $D_X(\sigma)$. Let $P_\sigma$ be the Plancherel polynomial with
respect to $\sigma$. If the Haar measures are normalized as in 
\cite[pp. 387-388]{Kn} and $\af_\C$ is identified with $\C$ by $z\in\C\mapsto
zH\in\af_\C$, then by \cite[Theorem 11.8]{Kn} 
(up to a minor correction) it is given by 
\begin{equation}\label{planch2}
P_{\sigma_k}(z)=\frac{1}{4\pi^2}\left(\frac{k^2}{4}-z^2\right),\quad k\in\Z,
\end{equation}
where $\sigma_k\in\hat M$ is the character defined by \eqref{uncharac}. 
We note that our definition of $P_\sigma(z)$ differs from the definition of 
$P_\sigma(z)$ in \cite[p. 56]{BO}.

Then the functional equation satisfied by 
$Z(s,\sigma)$ is the following equality
\begin{equation}\label{functequ1}
Z(s,\sigma)=e^{i\pi\eta(D_X(\sigma))}
\exp\left\{-4\pi\vol(X)\int_0^sP_\sigma(r)\,dr\right\} Z(-s,w\sigma)
\end{equation}
\cite[Theorem 3.18]{BO}.  Our formula differs from the formula in
\cite[Theorem 3.18]{BO}. This is due to the the different definition of
$P_\sigma(z)$. Since the functional equation
plays an important role in this paper, we will give a separate proof for the
functional equation of the symmetrized Selberg zeta function  
in section \ref{sect-funcequ}.

A related dynamical zeta function is the twisted Ruelle zeta function 
$R(s,\sigma)$ which is defined by
\begin{equation}\label{ruellezeta}
R(s,\sigma)=\prod_{\substack{[\gamma]\not=e\\\pr}}\left(1-\sigma(m_\gamma)
e^{-s\ell(\gamma)}\right),
\end{equation}
where, as above, $[\gamma]$ runs over the non-trivial primitive conjugacy 
classes in $\Gamma$. Note that $R(s,\sigma_0)$ equals the usual Ruelle zeta 
function
\begin{equation}\label{ruellezeta1}
R(s)=\prod_{\substack{[\gamma]\not=e\\\pr}}\left(1-e^{-s\ell(\gamma)}\right).
\end{equation}
Ruelle zeta functions of this type have been studied by Fried, and Bunke
and Olbrich \cite{BO}.  For any $\sigma\in\hat M$ let $\eta(D_X(\sigma))$ be 
the eta invariant occurring in the functional equation \eqref{functequ1}.
The two zeta functions are closely related. Namely the Ruelle zeta function 
can be expressed in terms Selberg zeta functions as follows.
\begin{lem}\label{relru-sel} 
For every $\sigma\in\hat M$ we have
\begin{equation}\label{ruselberg}
R(s,\sigma)=\frac{Z(s+1,\sigma)Z(s-1,\sigma)}
{Z(s,\sigma\otimes\sigma_{2})Z(s,\sigma\otimes\sigma_{-2})}.
\end{equation}
\end{lem}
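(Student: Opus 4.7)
The plan is to verify the identity by comparing logarithmic expansions of both sides on the half-plane $\Re(s)\gg 0$, where all products converge absolutely; the identity then extends to all of $\C$ by meromorphic continuation. The argument reduces to a short algebraic manipulation once the eigenvalues of $\Ad(m_\gamma a_\gamma)$ on $\overline\nf$ are identified.

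\textbf{Step 1: Log expansion of $R(s,\sigma)$.} Expanding $\log(1-x) = -\sum_{n\ge 1} x^n/n$ in the product defining $R(s,\sigma)$, and reindexing via $[\gamma] = [\gamma_0^n]$ with $\gamma_0$ primitive and $n = n_\Gamma(\gamma)$, using $\sigma(m_{\gamma_0^n}) = \sigma(m_{\gamma_0})^n$ and $\ell(\gamma_0^n) = n\ell(\gamma_0)$, I obtain
\begin{equation*}
\log R(s,\sigma) = -\sum_{[\gamma]\neq e} \frac{\sigma(m_\gamma)\, e^{-s\ell(\gamma)}}{n_\Gamma(\gamma)}.
\end{equation*}

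\textbf{Step 2: Eigenvalues on $\overline\nf$.} Set $\mu_1(\gamma) = \sigma_2(m_\gamma)$ and $\mu_2(\gamma) = \sigma_{-2}(m_\gamma)$, so that $\mu_1\mu_2 = 1$. A direct computation in $\mathfrak{sl}(2,\C)$ regarded as a real Lie algebra shows that $M$ acts on the complexification $\overline\nf_\C$ as $\sigma_2 \oplus \sigma_{-2}$, while $A$ acts on both summands by $e^{-\alpha}$. Consequently
\begin{equation*}
\det\bigl(\Id - \Ad(m_\gamma a_\gamma)_{\overline\nf}\bigr) = \bigl(1 - \mu_1 e^{-\ell(\gamma)}\bigr)\bigl(1 - \mu_2 e^{-\ell(\gamma)}\bigr).
\end{equation*}

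\textbf{Step 3: Combining the Selberg logs.} Applying formula \eqref{logzeta} to each of the four Selberg zeta values on the right-hand side of \eqref{ruselberg}, and collecting terms over a common index $[\gamma]\neq e$ with $u = e^{-\ell(\gamma)}$, I obtain
\begin{equation*}
\log\frac{Z(s+1,\sigma)Z(s-1,\sigma)}{Z(s,\sigma\otimes\sigma_2)Z(s,\sigma\otimes\sigma_{-2})}
= -\sum_{[\gamma]\neq e}\frac{\sigma(m_\gamma)\,e^{-s\ell(\gamma)}\bigl[u^2 + 1 - \mu_1 u - \mu_2 u\bigr]}{(1-\mu_1 u)(1-\mu_2 u)\, n_\Gamma(\gamma)}.
\end{equation*}
Since $\mu_1\mu_2 = 1$, the bracketed polynomial satisfies
\begin{equation*}
u^2 + 1 - (\mu_1+\mu_2)u = 1 - (\mu_1+\mu_2)u + \mu_1\mu_2 u^2 = (1-\mu_1 u)(1-\mu_2 u),
\end{equation*}
which cancels the denominator and leaves precisely the expression from Step 1. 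Exponentiating gives \eqref{ruselberg}.

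\textbf{Main obstacle.} The only non-routine point is the representation-theoretic identification in Step 2, namely that the $M$-weights on $\overline\nf_\C$ are exactly $\pm 2$; once this is in place, the remainder is power-series bookkeeping, and the cancellation in Step 3 is forced by the relation $\mu_1\mu_2 = 1$, which ultimately reflects the fact that the restricted root $\alpha$ has multiplicity $2$ for $\SL(2,\C)$ viewed as a real group.
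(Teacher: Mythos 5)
Your proof is correct and follows essentially the same route as the paper's: both expand $\log R(s,\sigma)$ and $\log Z(\,\cdot\,,\,\cdot\,)$ over conjugacy classes (the paper cites \cite[(3.4)]{BO} and \eqref{logzeta}) and then observe that the factor $1 - (\sigma_2(m_\gamma)+\sigma_{-2}(m_\gamma))e^{-\ell(\gamma)}+e^{-2\ell(\gamma)}$ in the numerator cancels $\det(\Id-\Ad(m_\gamma a_\gamma)_{\overline\nf})$. Your Step~2 just makes explicit the weight decomposition $\overline\nf_\C \cong \sigma_2\otimes e^{-\alpha}\oplus \sigma_{-2}\otimes e^{-\alpha}$ that the paper leaves implicit, which is a reasonable thing to spell out.
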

\begin{proof}
By \cite[(3.4)]{BO} we have
\begin{equation}\label{logruelle1}
\log R(s,\sigma)=-\sum_{[\gamma]\neq e}\frac{\sigma(m_\gamma)}{n_\Gamma(\gamma)}
e^{-s\ell(\gamma)}.
\end{equation}
Using \eqref{logzeta} we get
\begin{equation}
\begin{split}
\log &Z(s+1,\sigma)+\log Z(s-1,\sigma)-\log Z(s,\sigma\otimes\sigma_2)
-\log Z(s,\sigma\otimes\sigma_{-2})\\
&=-\sum_{[\gamma]\neq e}\frac{\sigma(m_\gamma)(1-\sigma_2(m_\gamma)e^{-\ell(\gamma)}
-\sigma_{-2}(m_\gamma)e^{-\ell(\gamma)}+e^{-2\ell(\gamma)})}
{\det(\Id-\Ad(m_\gamma a_\gamma)_{\overline{\nf}})n_\Gamma(\gamma)}
e^{-s\ell(\gamma)}\\
&=-\sum_{[\gamma]\neq e}\frac{\sigma(m_\gamma)}{n_\Gamma(\gamma)}
e^{-s\ell(\gamma)}.
\end{split}
\end{equation}
Together with \eqref{logruelle1} the lemma follows.
\end{proof}
Put
\[
\theta_X(\sigma):=2\eta(D_X(\sigma))-\eta(D_X(\sigma\otimes\sigma_2))-
\eta(D_X(\sigma\otimes\sigma_{-2})),\quad \sigma\in\hat M.
\]
We summarize the main properties of $R(s,\sigma)$ by the following proposition.
\begin{prop}\label{prop-ruelle}
For each $\sigma\in\hat M$ we have
\begin{enumerate}
\item[1)]
 The infinite product (\ref{ruellezeta}) is absolutely convergent in the 
half-plane $\Re(s)>2$.
\item[2)] $R(s,\sigma)$ admits a meromorphic extension to whole complex plane.  
\item[3)] $R(s,\sigma)$ satisfies the following functional
equation.
\begin{equation}\label{functequ}
R(s,\sigma)=e^{i\pi\theta_X(\sigma)}\exp\left(4\pi^{-1}
\vol(\Gamma\bs \bH^3)s\right)R(-s,w\sigma).
\end{equation}
\end{enumerate}
\end{prop}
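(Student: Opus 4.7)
The plan is to prove the three parts in order, getting (1) directly from the defining product, deducing (2) from Lemma~\ref{relru-sel}, and obtaining (3) by pushing the functional equation~\eqref{functequ1} through the identity in Lemma~\ref{relru-sel}.

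For (1), I would take the logarithm of \eqref{ruellezeta}, compare with the Taylor series, and bound
\[
\sum_{[\gamma]\neq e}\left|\frac{\sigma(m_\gamma)}{n_\Gamma(\gamma)}e^{-s\ell(\gamma)}\right|
\leq \sum_{[\gamma]\neq e}e^{-\Re(s)\ell(\gamma)},
\]
using that $\sigma$ is unitary, so $|\sigma(m_\gamma)|=1$. The growth estimate \eqref{growth} together with partial summation shows this is finite (and uniformly so on compact subsets) whenever $\Re(s)>2$, which gives the absolute convergence of the defining Euler product in that half-plane. Part (2) is immediate from Lemma~\ref{relru-sel}, since each of the four Selberg factors $Z(s\pm 1,\sigma)$, $Z(s,\sigma\otimes\sigma_{\pm 2})$ has a meromorphic continuation to $\C$ by \cite[Theorem 3.15]{BO}.

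The main work is (3). Substituting the functional equation~\eqref{functequ1} into each Selberg factor in Lemma~\ref{relru-sel}, and using $w(\sigma\otimes\sigma_{\pm 2})=w\sigma\otimes\sigma_{\mp 2}$, I would regroup as
\[
R(s,\sigma)=e^{i\pi\theta_X(\sigma)}\,\exp\bigl(-4\pi\vol(X)\cdot T(s)\bigr)\,R(-s,w\sigma),
\]
where the eta contributions collapse by definition of $\theta_X(\sigma)$, and
\[
T(s)=\int_0^{s+1}P_\sigma+\int_0^{s-1}P_\sigma-\int_0^{s}P_{\sigma\otimes\sigma_2}-\int_0^{s}P_{\sigma\otimes\sigma_{-2}}.
\]
Writing $\sigma=\sigma_k$ and using \eqref{planch2}, a direct computation of $T(s)$ reduces to two elementary algebraic identities: $(k+2)^2+(k-2)^2=2k^2+8$ (which cancels all the $k$-dependent terms against the linear part of the numerator integrals), and $(s+1)^3+(s-1)^3-2s^3=6s$ (which cancels the $s^3$ terms and leaves a linear remainder). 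Putting the pieces together gives $T(s)=-s/\pi^2$, hence $-4\pi\vol(X)\cdot T(s)=4\pi^{-1}\vol(X)s$, which is exactly the exponent asserted in~\eqref{functequ}.

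The only genuine obstacle is bookkeeping the cancellation in $T(s)$ correctly; in particular, one must check that the shifts in the $M$-character appearing in the denominator of Lemma~\ref{relru-sel} conspire with the shifts in the argument appearing in the numerator to leave a purely linear function of $s$ with no dependence on $k$. Since the identification $\af_\C\cong\C$ and the normalization of the Haar measures used here agree with those in \cite[pp.~387--388]{Kn} under which \eqref{planch2} holds, no further normalization constants intervene, and the proposition follows.
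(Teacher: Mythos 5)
Your proposal is correct and follows essentially the same route as the paper: part (1) from the growth estimate \eqref{growth} applied to the logarithmic series, part (2) from Lemma \ref{relru-sel} together with the meromorphic continuation of the Selberg zeta factors, and part (3) by substituting \eqref{functequ1} into the four factors of \eqref{ruselberg}, noting $w(\sigma\otimes\sigma_{\pm2})=w\sigma\otimes\sigma_{\mp2}$ so the $Z$-factors regroup into $R(-s,w\sigma)$ and the eta phases assemble into $\theta_X(\sigma)$, and then computing the combination of integrals of Plancherel polynomials to equal $-s/\pi^2$. Your two algebraic identities $(k+2)^2+(k-2)^2=2k^2+8$ and $(s+1)^3+(s-1)^3-2s^3=6s$ are exactly the cancellations that make $T(s)$ $k$-independent and linear in $s$, matching the paper's single-line computation.
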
 
\begin{proof}
1) follows from the estimation \eqref{growth}.
 The meromorphic extension is established in 
\cite[Chap. 4]{BO} and the functional equation is proved in 
\cite[Theorem 4.5]{BO}. It follows from Lemma \ref{relru-sel} and the 
functional equation  of the Selberg zeta function. Namely
using  \eqref{ruselberg} and  \eqref{functequ1} we get
\[
\begin{split}
\frac{R(s,\sigma)}{R(-s,w\sigma)}=e^{i\pi\theta_X(\sigma)}
\exp\Biggl(-4\pi\vol(X)\Biggl\{\int_0^{s+1}& P_{\sigma_k}(r)\;dr
+\int_0^{s-1} P_{\sigma_k}(r)\\
&-\int_0^{s} P_{\sigma_{k+2}}(r)\;dr -\int_0^{s} P_{\sigma_{k-2}}(r)\;dr 
\Biggr\}\Biggr).
\end{split}
\]
It follows from  (\ref{planch2}) by a simple computation that
\[
\int_0^{s+1} P_{\sigma_k}(r)\;dr
+\int_0^{s-1} P_{\sigma_k}(r)
q-\int_0^{s} P_{\sigma_{k+2}}(r)\;dr -\int_0^{s} P_{\sigma_{k-2}}(r)\;dr =
-\frac{s}{\pi^2}
\]
which implies 3).
\end{proof}

Now let $\tau\colon G\to \GL(V)$ be a representation in a finite-dimensional
complex vector space $V$. We fix a norm $\parallel\cdot\parallel$ in $V$.
The restriction $\tau|_{MA}$ of $\tau$ to $MA$ decomposes into characters:
\begin{equation}\label{decompo}
\tau|_{MA}=\bigoplus_{k\in I}\sigma_k\otimes e^{\nu_k\alpha},
\end{equation}
where $I\subset \Z$ is finite and $\nu_k\in\frac{1}{2}\Z$. Let $c=\max
\{|\nu_k|\colon k\in I\}$. Given $g\in G$, we denote by $a(g)\in A^+$
the $A^+$-component of $g$ with respect to the Cartan decomposition 
$G=KA^+K$. It follows from (\ref{decompo}) that there exists $C_1>0$ such that
\[
\parallel\tau(g)\parallel\le C_1 e^{c\alpha(\log a(g))},\quad g\in G.
\]
This implies that there exists $c_2>0$ such that
\[
\parallel \tau(\gamma)\parallel \le C e^{c_2\ell(\gamma)},\quad \gamma\in
\Gamma\setminus \{1\}.
\]
Therefore, the infinite product
\begin{equation}\label{ruelle1}
R_\tau(s)=\prod_{\substack{[\gamma]\not=e\\\pr}}\det\left(\I-\tau(\gamma)
e^{-s\ell(\gamma)}\right)
\end{equation}
is absolutely convergent in the half-plane $\Re(s)>c_2+2$. By (\ref{decompo})
we have
\[
\det\left(\I-\tau(\gamma)e^{-s\ell(\gamma)}\right)=
\det\left(\I-\tau(m_\gamma a_\gamma)e^{-s\ell(\gamma)}\right)
=\prod_{k\in I}\det\left(1-\sigma_k(m_\gamma)e^{-(s-\nu_k)\ell(\gamma)}\right).
\]
Taking the product of both sides over all non-trivial 
primitive conjugacy classes, we get
\begin{equation}\label{decompo1}
R_\tau(s)=\prod_{k\in I}R(s-\nu_k,\sigma_k),\quad \Re(s)>c_2+2.
\end{equation}
The right hand side is a meromorphic function on $\C$. This implies that
$R_\tau(s)$ admits a meromorphic continuation to $\C$. 

Using \eqref{ruselberg}, it follows that $R_\tau(s)$ can also be expressed in
terms of twisted Selberg zeta functions. This formula can be simplified using
Kostant's Bott-Borel-Weil theorem \cite{Ko} which we recall next. Let
\begin{equation}\label{exterior}
\mu_p\colon MA\to\GL(\Lambda^p\nf_\C),\quad p=0,1,2, 
\end{equation}
be the $p$-th exterior power of the adjoint representation of $MA$ on $\nf_\C$.
It decomposes into characters as follows
\begin{equation}\label{exterior1}
\mu_0=\sigma_0,\quad \mu_1=(\sigma_2\otimes e^{\alpha})\oplus 
(\sigma_{-2}\otimes e^{\alpha}),\quad \mu_2=\sigma_0\otimes e^{2\alpha}.
\end{equation}
Denote by $\tilde\mu_p$ the contragredient representation of the representation 
\eqref{exterior}. Given
$(m,n)\in\Z\times Z$, we define a character $\chi_{(m,n)}\colon MA\to \C^\times$
by
\begin{equation}\label{charact}
\chi_{(m,n)}=\sigma_{m-n}\otimes e^{\frac{m+n}{2}\alpha}.
\end{equation} 
\begin{lem}\label{thmkostant}
Let $\tau$ be an irreducible representation of $G$ with highest weight
$\Lambda_\tau\in\N_0\times \N_0$. We have the following
identity of characters of $MA$.
\begin{equation}\label{kostant1}
\sum_{p=0}^2 (-1)^p\tr\tilde\mu_p\cdot \tr\tau=\sum_{w\in W_G}(-1)^{\ell(w)}
\chi_{w(\Lambda_\tau+\rho_G)-\rho_G},
\end{equation}
where $\ell(w)$ denotes the length of $w$. 
\end{lem}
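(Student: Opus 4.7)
The plan is to derive \eqref{kostant1} as the $MA$-character-level shadow of Kostant's theorem on the $\nf$-cohomology of the irreducible module $V_\tau$. Both sides of the identity represent the same virtual $MA$-module, namely the Euler characteristic of $H^\bullet(\nf_\C, V_\tau)$, computed in two different ways.

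The first computation uses the Chevalley--Eilenberg (Koszul) complex $(\Lambda^\bullet \nf_\C^* \otimes V_\tau, d)$, whose cochain spaces and differentials are all $MA$-equivariant. In the Grothendieck group of finite-dimensional $MA$-modules one therefore has
\[
\sum_{p=0}^2 (-1)^p [\Lambda^p\nf_\C^* \otimes V_\tau] \;=\; \sum_{p=0}^2 (-1)^p [H^p(\nf_\C, V_\tau)].
\]
Taking $MA$-characters of both sides, the left side becomes $\sum_p (-1)^p \tr\tilde\mu_p \cdot \tr\tau$, which is the left-hand side of \eqref{kostant1}. For the right side I would invoke Kostant's algebraic form of the Bott--Borel--Weil theorem \cite{Ko}, which identifies
\[
H^p(\nf_\C, V_\tau) \;\cong\; \bigoplus_{\substack{w \in W_G \\ \ell(w)=p}} F_{w(\Lambda_\tau+\rho_G)-\rho_G}
\]
as modules over the Levi $\mf_\C \oplus \af_\C$, with each $F_\mu$ the one-dimensional weight module (one-dimensional because $\mf\oplus\af$ is an abelian Cartan in our minimal parabolic). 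Its $MA$-character is precisely $\chi_{w(\Lambda_\tau+\rho_G)-\rho_G}$ under the convention \eqref{charact}, so summing over $p$ with signs produces the right-hand side of \eqref{kostant1}.

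The main obstacle is to justify Kostant's theorem for $G=\SL(2,\C)$ treated as a \emph{real} Lie group. Here $\gf_\C$ splits as a direct sum of two commuting copies of $\mathfrak{sl}(2,\C)$, and the irreducible representation with highest weight $(m,n)$ is the external tensor product $\tau_m \otimes \ov\tau_n$ of the standard holomorphic and antiholomorphic irreducibles. One therefore applies the classical Kostant theorem on each simple factor and multiplies the two identities; the full Weyl group $W_G=\Z/2\times\Z/2$, with its length function taking values $0,1,1,2$, arises as the product of the two rank-one Weyl groups. This real-group version is precisely the statement used in \cite{Wo}, cf.\ \cite{Si}. A secondary bookkeeping step is to confirm that the parametrization $(m,n)\mapsto\chi_{(m,n)}$ in \eqref{charact} agrees with the restriction of $\hf_\C^*$-weights to $\mf\oplus\af$, so that Kostant's weights $w(\Lambda_\tau+\rho_G)-\rho_G$ on the right really translate into the characters $\chi_{w(\Lambda_\tau+\rho_G)-\rho_G}$ of $MA$.
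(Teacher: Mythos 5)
Your argument is essentially identical to the paper's: you combine the Euler-characteristic (Poincar\'e principle) identity for the Koszul complex $\Lambda^\bullet\nf_\C^*\otimes V_\tau$ with Kostant's theorem on $H^\bullet(\nf,V_\tau)$, exactly as the paper does via \cite[(7.2.3)]{Ko} and the real analog of \cite[Theorem~5.14]{Ko} from \cite{BW}, \cite{Si}. The only difference is that you sketch a self-contained derivation of the real form of Kostant's theorem by factoring $\gf_\C\cong\mathfrak{sl}(2,\C)\oplus\mathfrak{sl}(2,\C)$ and applying the classical theorem to each factor, whereas the paper simply cites the literature; this is a harmless elaboration, and the paper itself notes that an elementary verification via \eqref{sl2irred} is possible.
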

\begin{proof} 
Let $L=MA$. For a finite-dimensional $L$-module $W$ denote by $\ch_L(W)$ the
element in the character ring $R(L)$. Let $V_\tau$ be an irreducible 
$G$-module with highest weight $\Lambda_\tau$. By the analog of
a result of Kostant \cite[Theorem~5.14]{Ko}, for real Lie algebras
\cite[Theorem III.3.1]{BW}, \cite{Si}, we have
\begin{equation}\label{cohomol1}
\sum_{p=0}^2 (-1)^p\ch_L(H^p(\nf,V_\tau))=\sum_{w\in W_G}(-1)^{\ell(w)}
\chi_{w(\Lambda_\tau+\rho_G)-\rho_G},
\end{equation}
where $H^p(\nf,V_\tau)$ denotes the Lie algebra cohomology. 
By the Poincar\'e principle \cite[(7.2.3)]{Ko} we have
\begin{equation}\label{cohomol2}
\sum_{p=0}^2(-1)^p\ch_L(\Lambda^p\nf^*\otimes V_\tau)=
\sum_{p=0}^2 (-1)^p\ch_L(H^p(\nf,V_\tau)).
\end{equation}
Here $L$ acts on $\nf^*$ via the contragredient representation of the adjoint
representation. Combining \eqref{cohomol1} and \eqref{cohomol2}, the lemma
follows. In fact, in the present case the lemma could also be proved by an
elementary computation, using the parametrization \eqref{sl2irred}. 
\end{proof}
We are now ready to prove the formula which expresses $R_\tau(s)$ as a fraction
of twisted Selberg zeta functions.
For $w\in W_G$ write
\begin{equation}\label{wcharacter}
\chi_{w(\Lambda_\tau+\rho_G)-\rho_G}=\sigma_{\tau,w}\otimes 
e^{(\lambda_{\tau,w}-1)\alpha},
\end{equation}
where $\sigma_{\tau,w}\in\hat M$ and $\lambda_{\tau,w}\in\R$.

\begin{prop}
Let $\tau$ be an irreducible finite-dimensional representation of $G$.
Then we have
\begin{equation}\label{ruesel}
R_\tau(s)=\prod_{w\in W_G}Z(s-\lambda_{\tau.w},\sigma_{\tau,w})^{(-1)^{\ell(w)+1}}.
\end{equation}
\end{prop}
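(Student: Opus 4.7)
The plan is to pass to logarithms on both sides, expand $\log R_\tau(s)$ as a series over conjugacy classes using $\log\det(\I-A)=-\sum_{k\ge 1}k^{-1}\tr(A^k)$, apply Kostant's identity (Lemma \ref{thmkostant}) to rewrite $\tr\tau(m_\gamma a_\gamma)$ as a $W_G$-alternating sum of $MA$-characters divided by a Koszul-type determinant on $\overline\nf$, recognize the resulting expression as a Weyl sum of logarithmic series of shifted Selberg zeta functions via \eqref{logzeta}, and exponentiate.

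More concretely, combining the series expansion of $\log\det$ with the fact that each non-trivial $\gamma\in\Gamma$ is a unique power $\gamma_0^{n_\Gamma(\gamma)}$ of a primitive element, together with $\tau(\gamma)=\tau(m_\gamma a_\gamma)$ from \eqref{conjug}, one obtains
\begin{equation*}
\log R_\tau(s)=-\sum_{[\gamma]\neq e}\frac{\tr\tau(m_\gamma a_\gamma)}{n_\Gamma(\gamma)}\,e^{-s\ell(\gamma)},\qquad \Re(s)>c_2+2.
\end{equation*}
Using the explicit decompositions in \eqref{exterior1}, the universal linear-algebra identity $\sum_p(-1)^p\tr\Lambda^pT=\det(\I-T)$, and the Killing-form identification of $\nf^*$ with $\overline\nf$, one checks by direct computation that
\begin{equation*}
\sum_{p=0}^{2}(-1)^p\tr\tilde\mu_p(m_\gamma a_\gamma)=\det\!\bigl(\I-\Ad(m_\gamma a_\gamma)|_{\overline\nf}\bigr).
\end{equation*}
Dividing Kostant's identity \eqref{kostant1} (evaluated at $m_\gamma a_\gamma$) by this denominator expresses $\tr\tau(m_\gamma a_\gamma)$ as an alternating $W_G$-sum of characters $\chi_{w(\Lambda_\tau+\rho_G)-\rho_G}$ divided by $\det(\I-\Ad(m_\gamma a_\gamma)|_{\overline\nf})$.

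Substituting this into the log-series for $R_\tau(s)$ and using \eqref{wcharacter} to rewrite $\chi_{w(\Lambda_\tau+\rho_G)-\rho_G}(m_\gamma a_\gamma)=\sigma_{\tau,w}(m_\gamma)\,e^{(\lambda_{\tau,w}-1)\ell(\gamma)}$, the factor $e^{(\lambda_{\tau,w}-1)\ell(\gamma)}$ combines with $e^{-s\ell(\gamma)}$ to produce the shifted exponential $e^{-\ell(\gamma)}\,e^{-(s-\lambda_{\tau,w})\ell(\gamma)}$ appearing in \eqref{logzeta}. Interchanging the finite sum over $w\in W_G$ with the sum over $[\gamma]$ matches each Weyl-summand, term-by-term, with a logarithmic series of the form $\log Z(s-\lambda_{\tau,w},\sigma_{\tau,w})$ carrying a sign $(-1)^{\ell(w)+1}$ once the overall minus sign from the $\log\det$ expansion is combined with the Kostant signs; exponentiation then yields the stated product formula. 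The only substantive obstacle is careful sign bookkeeping — in particular, passing from $\mu_p$ to the contragredient $\tilde\mu_p$ and correctly pairing the Kostant signs with the global minus sign in \eqref{logzeta}. A useful sanity check is the trivial representation $\tau=1$, for which the four Weyl contributions should reproduce the $\sigma=\sigma_0$ case of Lemma \ref{relru-sel}, i.e.\ the factors $Z(s\pm 1,\sigma_0)$ and $Z(s,\sigma_{\pm 2})^{-1}$.
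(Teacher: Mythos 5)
Your proposal follows the paper's own argument step for step: expand $\log R_\tau$ via $\log\det(\I-A)=-\sum_k k^{-1}\tr A^k$ and the primitive decomposition of conjugacy classes, identify $\sum_p(-1)^p\tr\tilde\mu_p(ma)=\det(\I-\Ad(ma)_{\overline\nf})$ and divide Kostant's identity by it, substitute, and match each Weyl summand with the log-series \eqref{logzeta} of a shifted Selberg zeta function. The only remark worth making concerns the sign bookkeeping that you yourself single out as the "only substantive obstacle."

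There are in fact \emph{two} global minus signs to account for: one from the log-series expansion of $R_\tau$ (equation \eqref{logruelle}), and a second, independent one from the log-series expansion of $Z$ (equation \eqref{logzeta}, which also carries a leading minus). These two minus signs cancel, and combined with the Kostant sign $(-1)^{\ell(w)}$ one lands on
\[
\log R_\tau(s)=\sum_{w\in W_G}(-1)^{\ell(w)}\log Z(s-\lambda_{\tau,w},\sigma_{\tau,w}),
\]
i.e.\ the exponent in the product formula should be $(-1)^{\ell(w)}$, not $(-1)^{\ell(w)+1}$. Both your write-up and the paper's own final line of the proof keep only one of the two minus signs and arrive at $(-1)^{\ell(w)+1}$, which is consistent with the printed statement of the Proposition but is off by a global reciprocal. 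Your proposed sanity check with $\tau=1$ actually detects this: using the set $\Xi(\tau_{0,0})=\{(0,0),(-2,0),(0,-2),(-2,-2)\}$, the exponent $(-1)^{\ell(w)+1}$ gives $Z(s\pm1,\sigma_0)^{-1}Z(s,\sigma_{\pm2})^{+1}$, which is the \emph{reciprocal} of the $\sigma_0$ case of Lemma \ref{relru-sel}, whereas the exponent $(-1)^{\ell(w)}$ reproduces it exactly (and also matches \eqref{ruelle5} with $m=0$). So the sanity check you suggested is the right one; carry it out and it tells you the correct sign, and it also tells you that \eqref{ruesel} as printed in the paper contains the same typo. (This typo propagates consistently through \eqref{ruellesymm}, \eqref{ruelledet1} and \eqref{anatorvirt}, so it cancels out in the proof of Theorem \ref{theo-ruelle}, but it is a typo nonetheless.)
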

\begin{proof}
Recall that for an endomorphism $W$ of a finite-dimensional vector space we 
have
\begin{equation}\label{endomorph}
\det(\Id-W)=\sum_{k=0}^\infty (-1)^k\tr(\Lambda^k W).
\end{equation}
Let $m\in M$ and $a\in A$. Note that $\tilde\mu_p(ma)=
\Lambda^p\Ad(ma)_{\overline\nf}$. Hence if we apply 
\eqref{endomorph} to $\tilde\mu_p$ we get 
\[
\sum_{p=0}^2(-1)^p\tr\tilde\mu_p(ma)=\det\left(\Id-\Ad(ma)_{\overline\nf}\right).
\]
Using \eqref{kostant1} and \eqref{wcharacter}, we get
\begin{equation}\label{trace4}
\tr\tau(ma)=\sum_{w\in W_G}(-1)^{\ell(w)}\frac{\sigma_{\tau,w}(m)}
{\det\left(\Id-\Ad(ma)_{\overline\nf}\right)}e^{(\lambda_{\tau,w}-1)\alpha(\log a)}.
\end{equation}
Next we have
\begin{equation}\label{logruelle}
\begin{split}
\log R_\tau(s)&=\sum_{\substack{[\gamma]\neq e\\\OP{prime}}}
\tr \log\left(\Id-\tau(\gamma)e^{-s\ell(\gamma)}\right)\\
&=-\sum_{\substack{[\gamma]\neq e\\\OP{prime}}}\sum_{k=1}^\infty
\frac{\tr\left(\tau(\gamma)e^{-s\ell(\gamma)}\right)^k}{k}\\
&=-\sum_{[\gamma]\not= e}\frac{\tr\tau(\gamma)}{n_\Gamma(\gamma)}
e^{-s\ell(\gamma)}.
\end{split}
\end{equation}
Now let $\gamma\in\Gamma\setminus\{e\}$, $\gamma\sim m_\gamma a_\gamma$. Then
$\log a_\gamma=\ell(\gamma)H$. 
Inserting \eqref{trace4} on the right hand side of \eqref{logruelle}, we get
\[
\log R_\tau(s)=\sum_{w\in W_G}(-1)^{\ell(w)+1}\sum_{[\gamma]\neq e}
\frac{\sigma_{\tau,w}(m_\gamma)}
{\det\left(\Id-\Ad(m_\gamma a_\gamma)_{\overline\nf}\right)n_\gamma(\gamma)}
e^{-(s-\lambda_{\tau,w}+1)\ell(\gamma)}
\]
By \cite[(3.6)]{BO}, the right hand side equals
\[
\sum_{w\in W_G}(-1)^{\ell(w)+1}\log Z(s-\lambda_{\tau,w},\sigma_{\tau,w}),
\]
which proves the proposition.
\end{proof}
We also need to consider symmetrized Ruelle and Selberg zeta functions. Recall
that the nontrivial element $w_A\in W_A$ acts on $\hat M$ by $w_A\sigma_k=
\sigma_{-k}$. Let $\sigma\in\hat M\setminus\{\sigma_0\}$. Put
\begin{equation}\label{symmsel}
S(s,\sigma):=Z(s,\sigma)Z(s,w_A\sigma).
\end{equation}
This is the symmetrized Selberg zeta function. Let $\theta\colon G\to G$ be 
the Cartan involution. Put
\begin{equation}\label{thetatau}
\tau_\theta=\tau\circ\theta.
\end{equation}
Note that  $\tau_p=\Sym^p$ satisfies
$\tau_p\circ\theta=\overline\tau_p,\quad\mathrm{and}\quad\overline\tau_p\circ
\theta=\tau_p$. Thus $\theta$ acts on the highest weights by 
\begin{equation}\label{highestw}
\theta(m,n)=(n,m), \quad (m,n)\in\N_0\times\N_0.
\end{equation} 
By \eqref{sl2irred} it follows that an irreducible finite-dimensional
representation $\tau$ of $G$ satisfies $\tau_\theta=\tau$, if and only if 
$\tau=\tau_{m,m}$ for some $m\in\N_0$.
\begin{prop}
Let $\tau$ be an irreducible finite-dimensional representation of $G$.
Then we have
\begin{equation}\label{ruellesymm}
R_\tau(s)R_{\tau_\theta}(s)=\prod_{w\in W_G} S(s-\lambda_{\tau,w},\sigma_{\tau,w})
^{(-1)^{\ell(w)+1}},\quad \tau_\theta\ncong\tau,
\end{equation}
and
\begin{equation}\label{ruelle5}
R_{\tau_{m,m}}(s)=Z(s-(m+1),\sigma_0)Z(s+m+1,\sigma_0)S(s,\sigma_{2m+2})^{-1},
\quad m\in\N_0.
\end{equation}
\end{prop}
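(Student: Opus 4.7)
The plan is to derive both identities by applying the product formula \eqref{ruesel} of the preceding proposition to $\tau$ and to $\tau_\theta$ separately, and then repackaging the resulting $Z$-factors into symmetrized Selberg zeta functions by means of \eqref{symmsel}.

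The first step is to compute the Kostant data $(\sigma_{\tau,w},\lambda_{\tau,w})$ for a general $\tau=\tau_{m,n}$. Parametrizing the characters of $MA$ as $\chi_{(p,q)}=\sigma_{p-q}\otimes e^{((p+q)/2)\alpha}$ and noting that $\rho_G=\chi_{(1,1)}$, while $W_G=\{1,s_1,s_2,s_1s_2\}$ acts by independent sign changes on the pair $(p,q)$, a direct calculation gives the four pairs
\[
(\sigma_{m-n},\tfrac{m+n}{2}+1),\quad(\sigma_{-(m+n+2)},\tfrac{n-m}{2}),\quad(\sigma_{m+n+2},\tfrac{m-n}{2}),\quad(\sigma_{n-m},-\tfrac{m+n}{2}-1),
\]
indexed by $w=1,s_1,s_2,s_1s_2$ respectively. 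In the self-dual case $\tau=\tau_{m,m}$ these specialize to $(\sigma_0,m+1)$, $(\sigma_{-(2m+2)},0)$, $(\sigma_{2m+2},0)$, $(\sigma_0,-(m+1))$; substituting into \eqref{ruesel} and combining the two middle factors $Z(s,\sigma_{\pm(2m+2)})$ (which share the common argument $s$) into $S(s,\sigma_{2m+2})$ via \eqref{symmsel} produces \eqref{ruelle5}.

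For part (1) one runs the same computation for $\tau_\theta=\tau_{n,m}$, using \eqref{highestw} together with \eqref{sl2irred}. An inspection of the four pairs for $\tau_\theta$ shows that passing from $\tau$ to $\tau_\theta$ preserves the set of shifted arguments $\{s-\lambda_{\tau,w}\}_{w\in W_G}$ while interchanging the roles of $s_1$ and $s_2$ and replacing each $\sigma_{\tau,w}$ by its $w_A$-conjugate. Accordingly, every shifted argument appearing in $R_\tau(s)$ is paired in $R_{\tau_\theta}(s)$ with the same shifted argument but with the $w_A$-conjugate character, and the pair collapses via \eqref{symmsel} into a single $S$-factor; summing over $W_G$ then yields \eqref{ruellesymm}. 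The principal piece of care is the bookkeeping: one must verify that the $\theta$-induced permutation of $W_G$ preserves length parity (modulo the external involution $w_A$ on characters), so that the signed exponents of paired factors always coincide. This becomes mechanical once the four pairs $(\sigma_{\tau_\theta,w},\lambda_{\tau_\theta,w})$ are tabulated side by side with those of $\tau$.
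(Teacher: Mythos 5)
Your proof is correct and follows essentially the same route as the paper: compute the Kostant data $(\sigma_{\tau,w},\lambda_{\tau,w})$ from \eqref{characterw1}, observe that passing to $\tau_\theta$ interchanges the two length-one Weyl elements while replacing each $\sigma_{\tau,w}$ by its $w_A$-conjugate at the same $\lambda$, and collapse the paired factors via \eqref{symmsel}. Your explicit remark that one must check length parity is preserved under this permutation is a detail the paper leaves implicit in its multiset identity, but both arguments are the same.
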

\begin{proof}
Put
\[
\Xi(\tau)=\{w(\Lambda_\tau+\rho_G)-\rho_G\colon w\in W_G\}.
\]
Let $\tau=\tau_{m,n}$. Then we have
\[
\Xi(\tau)=\{(m,n),(-(m+2),n),(m,-(n+2)),(-(m+2),-(n+2))\}.
\]
By \eqref{charact} and \eqref{wcharacter}, it follows that
\begin{equation}\label{characterw1}
\begin{split}
\left\{(\sigma_{\tau,w},\lambda_{\tau,w})\colon w\in W_G\right\}=
\bigl\{&\left(\sigma_{m-n},(m+n)/2+1\right),
\left(\sigma_{-(m+n+2)},(n-m)/2\right),\\
&\left(\sigma_{m+n+2},(m-n)/2\right),
\left(\sigma_{n-m},-(m+n)/2-1\right)\bigr\}.
\end{split}
\end{equation}
Assume that $m\neq n$. Using that $(\tau_{m,n})_\theta=\tau_{n,m}$ and
\eqref{characterw1}, it follows  that
\[
\begin{split}
\left\{(\sigma_{\tau,w},\lambda_{\tau,w})\colon w\in W_G\right\}\cup
&\left\{(\sigma_{\tau_\theta,w},\lambda_{\tau_\theta,w})\colon w\in W_G\right\}\\
&\hskip1truecm=\left\{(\sigma_{\tau,w},\lambda_{\tau,w}), 
(w_A\sigma_{\tau,w},\lambda_{\tau,w})\colon w\in W_G\right\}.
\end{split}
\]
By \eqref{ruesel} the first equality follows. 
 Now assume that $\tau_\theta=\tau$ . By \eqref{highestw} there exists
$m\in\N_0$ such that $\tau=\tau_{m,m}$. In this case we get 
\begin{equation}\label{characterw2}
\left\{(\sigma_{\tau,w},\lambda_{\tau,w})\colon w\in W_G\right\}=\left\{
(\sigma_0,m+1),(\sigma_{-2(m+1)},0),(\sigma_{2(m+1)},0),(\sigma_0,-(m+1)
\right\}.
\end{equation}
Using again \eqref{ruesel} and \eqref{symmsel}, we get \eqref{ruelle5}.
\end{proof}

\section{Bochner-Laplace  operators}\label{sec-bochlapl}
\setcounter{equation}{0}

In this section we study certain auxiliary elliptic operators
which are needed to derive the determinant formula and the functional
equation for the Selberg zeta function. These operators were first introduced
by Bunke and Olbrich \cite{BO}.

Let $w_A\in W_A$ be the nontrivial
element. It acts on $\sigma_k\in\hat M$ by $w_A\sigma_k=\sigma_{-k}$. Thus, if
$k\neq 0$, then $\sigma_k$ is not $W_A$-invariant. For $l\in\N_0$ let $\nu_l\in
\hat K$ denote the irreducible representation of $K=\SU(2)$ of highest
weight $l$. Then we have
\begin{equation}\label{restrict1}
\nu_l|_M=\bigoplus_{k=0}^l\sigma_{l-2k}.
\end{equation}
Let $R(K)$ and $R(M)$ denote the representation rings of $K$ and $M$, 
respectively. The inclusion $i\colon M\to K$ induces the restriction map
$i^*\colon R(K)\to R(M)$. From  \eqref{restrict1} we get
\begin{equation}\label{restrict2}
\begin{aligned}
&i^*(\nu_l-\nu_{l-2})=\sigma_l +\sigma_{-l},\quad l\in\N,\; l\ge 2;\\
&i^*(\nu_1)=\sigma_1 +\sigma_{-1},\quad i^*(\nu_0)=\sigma_0.
\end{aligned}
\end{equation}
It follows from \eqref{restrict2} that for every $\sigma\in\hat M$ there 
exists a unique $\xi_\sigma\in R(K)$ such that 
\begin{equation}\label{restrict3}
i^*(\xi_\sigma)=\sigma+w_A\sigma.
\end{equation}
Then  we have
\begin{equation}\label{multipl}
\xi_\sigma=\sum_{\nu\in\hat K} m_\nu(\sigma) \nu.
\end{equation}
with $m_\nu(\sigma)\in\{0,\pm1\}$ for $\sigma\neq \sigma_0$ and 
$m_{\nu_{l}}(\sigma_0)=0$, if $l\neq0$, and $m_{\nu_{0}}(\sigma_0)=2$.

Given  $\nu\in\hat K$, let $\tilde E_\nu$ denote the associated homogeneous
vector bundle over $G/K$ and $E_\nu=\Gamma\bs \tilde E_\nu$ the 
corresponding locally homogeneous bundle over $X$. 
For $\sigma\in\hat M$ and $\nu\in\hat K$ let $m_\nu(\sigma)$ be defined by
\eqref{multipl}.  Put
\begin{equation}\label{bundle}
E(\sigma)=\bigoplus_{\substack{\nu\\m_\nu(\sigma)\neq 0}}E_\nu.
\end{equation}
This bundle has a canonical grading 
\begin{equation}\label{gradedb}
E(\sigma)=E^+(\sigma)\oplus E^-(\sigma)
\end{equation}
defined by the sign of $m_\nu(\sigma)$.

Let $\tilde A_\nu$ be the elliptic $G$-invariant differential operator on
 $C^\infty(G/K,\tilde E_\nu)\cong(C^\infty(G)\otimes V_\nu)^K$ which is induced 
by $-\Omega$, where $\Omega\in\cZ(\gf_\C)$ is the Casimir element. Let 
\[
\tilde\Delta_\nu=(\nabla^\nu)^*\nabla^\nu
\]
be the connection
Laplacian associated to the canonical invariant connection $\nabla^\nu$
of $\tilde E_\nu$. By \cite[Proposition 1.1]{Mia} we have
\begin{equation}\label{bochlapl1}
\tilde A_\nu=\tilde\Delta_\nu-\nu(\Omega_K),
\end{equation}
where $\Omega_K\in\cZ(\kf_\C)$ is the Casimir element of $K$. Being 
$G$-invariant, $\tilde A_\nu$ descends to an elliptic operator 
\begin{equation}\label{aoperator1}
A_\nu\colon C^\infty(X,E_\nu)\to C^\infty(X,E_\nu).
\end{equation}
It follows from \eqref{bochlapl1} that $A_\nu$
is symmetric and bounded from below. For $l\in\Z$ put
\begin{equation}\label{cl}
c(\sigma_l)=\frac{l^2}{4}-1.
\end{equation}
Define the operator $A(\sigma)$ acting on 
$C^\infty(X,E(\sigma))$ by
\begin{equation}\label{aoperator}
A(\sigma):=\bigoplus_{\substack{\nu\\m_\nu(\sigma)\neq 0}}A_\nu+c(\sigma).
\end{equation}
Obviously, $A(\sigma)$  preserves the grading  of $E(\sigma)$.

By \eqref{bochlapl1} the elliptic operator $A_\nu$ is symmetric and bounded 
from below. Therefore the
heat operator $e^{-tA_\nu}$ is well defined and is a trace class operator. 
Given $\sigma\in\hat M$, put
\begin{equation}\label{virtual1}
K(t;\sigma)=\sum_{\nu\in\hat K} m_\nu(\sigma)
\Tr(e^{-tA_\nu}),
\end{equation}
where $m_\nu(\sigma)$ is defined by \eqref{multipl}.
Our next goal is to  use the Selberg trace formula to express
$K(t,\sigma)$ in terms of the length of the closed geodesics.

Let $\tilde A_\nu$ be the lift of $A_\nu$ to the universal covering 
$\tilde X=G/K$. It acts in the
space of smooth sections of the homogeneous vector bundle $\tilde E_\nu$ 
associated to $\nu$. With respect to the isomorphism $C^\infty(G/K,\tilde E_\nu)
\cong (C^\infty(G)\otimes V_\nu)^K$ we have
\[
\tilde A_\nu=-R(\Omega)\otimes \Id_{V_\nu}.
\]
Let $e^{-t\tilde A_\nu}$, $t>0$, the heat semigroup generated by $\tilde A_\nu$.
This is a smoothing operator on $L^2(G/K,\tilde E_\nu)\cong 
(L^2(G)\otimes V_\nu)^K$ which commutes with the action of $G$. Therefore it
is of the form
\[
\left(e^{-t\tilde A_\nu}\phi\right)(g)=\int_G H_t^\nu(g^{-1}g^\prime)\phi(g^\prime)
\;dg^\prime,\quad \phi\in (L^2(G)\otimes V_\nu)^K,\quad g\in G,
\]
where the kernel $H_t^\nu\colon G\to \End(V_\nu)$ is $C^\infty$, $L^2$, and 
satisfies the covariance property
\begin{equation}\label{covar2}
H_t^\nu(k^{-1}gk^\prime)=\nu(k)^{-1}\circ H_t^\nu(g)\circ \nu(k^\prime),\quad 
k,k^\prime\in K,\; g\in G.
\end{equation}
Actually, a much stronger result holds. For $q>0$ let $\Co^q(G)$ be 
Harish-Cahndra's $L^p$-Schwartz space. Then we have
\begin{equation}\label{schwartz}
H_t^\nu \in (\mathcal{C}^q(G)\otimes\End(V_\tau))^{K\times K}
\end{equation}
for all $q>0$. The proof is similar to the proof of Proposition 2.4 in 
\cite{BM}. By standard arguments it
follows that the kernel of the heat operator $e^{-tA_\nu}$ is given by
\begin{equation}\label{eq:15}
  H^{\nu}(t;x,x')=\sum_{\gamma\in\Gamma}H^{\nu}_t(g^{-1}\gamma g'),
\end{equation}
where $x,x'\in X$ and  $x=\Gamma g K$ and $x'=\Gamma g'K$. Therefore the
trace of the heat operator $e^{-tA_\nu}$ is given by
\begin{displaymath}
\Tr\left(e^{-tA_\nu}\right)=\int_X\tr H^{\nu}(t;x,x)\; dx,
  \end{displaymath}
where $\tr$ denotes the trace $\tr\colon\OP{End}(
E_{\nu,x})\to \C$ for $x\in X$. Let 
\[
h^\nu_t(g)=\tr H^{\nu}_t(g).
\] 
Using \eqref{covar2} and \eqref{eq:15}, it follows that
\begin{equation}\label{trace1}
\Tr\left(e^{-tA_\nu}\right)=\int_{\Gamma\bs G}\sum_{\gamma\in \Gamma} 
h^{\nu}_t(g^{-1}\gamma g)\,dg.
\end{equation}
Let $R_\Gamma$ denote the right regular representation of $G$ on
$L^2(\Gamma\bs G)$. Then \eqref{trace1} can be written as
\begin{equation}\label{trace2}
  \Tr\left(e^{-tA_\nu}\right)=\Tr R_\Gamma(h^{\nu}_t).
\end{equation}
Let
\begin{equation}\label{virtual3}
h^\sigma_t=\sum_{\nu\in \hat K}m_\nu(\sigma) h^\nu_t.
\end{equation}
Then by \eqref{virtual1} and  \eqref{trace2} we get
\[
K(t;\sigma)=\Tr R_\Gamma\left(h^\sigma_t\right),\quad t>0.
\]
We can now apply the Selberg trace formula \cite{Wa1} . We use the notation
introduced in section \ref{prelim}. Let $\overline\nf=\theta(\nf)$, where 
$\theta$ is the Cartan involution.  For $\gamma\in\Gamma\setminus\{e\}$ put
\[
 D(\gamma)=e^{\ell(\gamma)}
\det(\Id-\Ad(m_\gamma a_\gamma)_{\overline{\nf}}). 
\]
Then the Selberg trace formula gives
\begin{equation}\label{anator3}
  \begin{split}    
K(t;\sigma)  &=\OP{Vol}(X)h^\sigma_t(e)\\
  &\mspace{30mu} +\frac{1}{2\pi}\sum_{[\gamma]\neq e}
\frac{\ell(\gamma)}{n_\Gamma(\gamma)D(\gamma)}\sum_{n\in\Z}
  \overline{\sigma_n(m_\gamma)}\int_\R\Theta_{n,\lambda}(h^\sigma_t)
e^{-i\ell(\gamma)\lambda}\;d\lambda.
  \end{split}
\end{equation}
Note that $h_t^\sigma(e)$ can also be expressed in terms of characters. By
\eqref{schwartz},  each $h^\nu_t$ belongs to $\mathcal{C}^q(G)$ for
all $q>0$. Therefore $h_t^\sigma$ is in $\mathcal{C}^q(G)$. Hence we can  
apply the Plancherel  formula for
$G$ (see \cite[Theorem 11.2]{Kn}).  With respect to the normalizations of Haar
measures used in \cite{Kn} and the definition of the Plancherel polynom 
\eqref{planch2}, we have
\begin{equation}\label{plachform}
  h^\sigma_t(e)=\sum_{n\in\Z}\int_{\mathbb R}
\Theta_{n,\lambda}(h^\sigma_t)P_{\sigma_n}(i\lambda)\,d\lambda.
\end{equation}

To continue we need to compute the characters $\Theta_{n,\lambda}(h^\sigma_t)$.
First by \eqref{virtual3} we have
\begin{equation}\label{virtual4}
\Theta_{n,\lambda}(h^\sigma_t)=\sum_{\nu\in \hat K}m_\nu(\sigma)
\Theta_{n,\lambda}(h^\nu_t),
\end{equation}
which reduces the problem to the computation of $\Theta_{n,\lambda}(h^\nu_t)$.
For any unitary representation $\pi$  of $G$ on a Hilbert
space $\H_\pi$ set
\[
\tilde \pi(H^{\nu}_t)=\int_G \pi(g)\otimes H^{\nu}_t(g)\,dg.
\]
This defines a bounded operator on 
$\H_\pi\otimes V_\nu$. As in \cite[pp. 160-161]{BM} it follows from 
\eqref{covar2} that relative to the splitting
\[
\H_\pi\otimes  V_\nu=
\left(\H_\pi\otimes  V_\nu\right)^K\oplus
\left[\left(\H_\pi\otimes  V_\nu\right)^K
\right]^\perp,
\]
$\tilde \pi(H^{\nu}_t)$ has the form
\[
\tilde \pi(H^{\nu}_t)=\begin{pmatrix}\pi(H^{\nu}_t)& 0\\ 0& 0
\end{pmatrix}
\]
with $\pi(H^{\nu}_t)$ acting on $\left(\H_\pi\otimes  V_\tau\right)^K$. 
Then it follows as in \cite[Corollary 2.2]{BM} that
\begin{equation}\label{integop}
\pi(H^{\nu}_t)=e^{t\pi(\Omega)}\Id,
\end{equation}
where $\Id$ is the identity on $\left(\H_\pi\otimes  V_\nu\right)^K$.
Let $\{\xi_n\}_{n\in\N}$ and $\{e_j\}_{j=1}^m$ be orthonormal bases of $\H_\pi$
and $V_\nu$, respectively. Then we have
\begin{equation}\label{equtrace}
\begin{split}
\Tr\pi(H_t^\nu)&=\sum_{n=1}^\infty\sum_{j=1}^m\langle\pi(H_t^\nu)
(\xi_n\otimes e_j),(\xi_n\otimes e_j)\rangle\\
&=\sum_{n=1}^\infty\sum_{j=1}^m\int_G\langle\pi(g)\xi_n,\xi_n\rangle
\langle H_t^\nu(g)e_j,e_j\rangle\,dg\\
&=\sum_{n=1}^\infty\int_G h_t^\nu(g)\langle\pi(g)\xi_n,\xi_n\rangle\;dg\\
&=\Tr\pi(h_t^\nu).
\end{split}
\end{equation}
Together with \eqref{integop} we get
\begin{equation}\label{globalch}
\Tr\pi(h_t^\nu)=e^{t\pi(\Omega)}\dim\left(\H_\pi\otimes  
V_\nu\right)^K.
\end{equation}
Now we consider a unitary principal series representation $\pi_{n,\lambda}$
Let $[\nu|_M:\sigma_n]$ denote the multiplicity of $\sigma_n\in\hat M$ in 
$\nu|_M$. It equals $0$ or $1$. For any representation $\pi$ of $G$ denote by
$\pi^\vee$ the contragredient representation of $\pi$. 
By Frobenius reciprocity \cite[p. 208]{Kn}, we 
have
\[
\dim\left(\H_{n,\lambda}\otimes V_\nu\right)^K=[\pi_{n,\lambda}^\vee|_K\colon \nu]=
[\pi_{-n,-\lambda}|_K\colon \nu]=[\nu|_M\colon \sigma_{-n}]=
[\nu|_M\colon \sigma_n].
\]
Combined with \eqref{globalch}, we obtain
\[
\Theta_{n,\lambda}(h^\nu_t)=e^{t\pi_{n,\lambda}(\Omega)}
[\nu|_M:\sigma_n].
\]
Using \eqref{virtual4}, \eqref{multipl} and \eqref{restrict3}, we get
\begin{equation}
\begin{split}\label{globalch1}
\Theta_{n,\lambda}(h^\sigma_t)=e^{t\pi_{n,\lambda}(\Omega)}
\sum_{\nu\in\hat K} m_\nu(\sigma)[\nu|_M:\sigma_n]
=e^{t\pi_{n,\lambda}(\Omega)}[\sigma+w_A\sigma:\sigma_n].
\end{split}
\end{equation}
The Casimir eigenvalue  $\pi_{n,\lambda}(\Omega)$ is given by \eqref{ucasimir}.
Using the definition of $c(\sigma)$ by \eqref{cl} it can be written as 
\begin{equation}\label{ucasimir1}
\pi_{n,\lambda}(\Omega)=-\lambda^2+c(\sigma_n).
\end{equation}
Now we can put our computations together. Let $k\in\N_0$. 
If we insert \eqref{globalch1} in \eqref{anator3} and \eqref{plachform} and
use \eqref{ucasimir1}, we get
\begin{equation}\label{virtual11}
\begin{split}
K(t;\sigma_k)=e^{tc(\sigma_k)}\Biggl(2\vol(X)
&\int_\R e^{-t\lambda^2}P_{\sigma_k}(i\lambda)\;d\lambda\\
&+\sum_{[\gamma]\neq e}\frac{\ell(\gamma)}{n_\Gamma(\gamma)}
L_{\sym}(\gamma;\sigma_k)\frac{e^{-\ell(\gamma)^2/(4t)}}{(4\pi t)^{1/2}}\Biggr),
\end{split}
\end{equation}
where 
\begin{equation}\label{lefschetz}
L_{\sym}(\gamma,\sigma)=\frac{(\sigma(m_\gamma)+(w_A\sigma)(m_\gamma))
e^{-\ell(\gamma)}}
{\det\left(\Id-\Ad(m_\gamma a_\gamma)_{\overline \nf}\right)}.
\end{equation}
Using the definition of $A(\sigma)$ by \eqref{aoperator} together with
\eqref{virtual1}, we finally get
\begin{prop}\label{prop-virtual}
For every $\sigma\in\hat M$ we have
\begin{equation}\label{virtual8}
\Tr_s\left(e^{-tA(\sigma)}\right)=2\vol(X)
\int_\R e^{-t\lambda^2}P_{\sigma}(i\lambda)\;d\lambda
+\sum_{[\gamma]\neq e}\frac{\ell(\gamma)}{n_\Gamma(\gamma)}
L_{\sym}(\gamma;\sigma)\frac{e^{-\ell(\gamma)^2/(4t)}}{(4\pi t)^{1/2}}.
\end{equation}
\end{prop}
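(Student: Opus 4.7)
The plan is to recognize the left hand side of \eqref{virtual8} as $e^{-tc(\sigma)}$ times the quantity $K(t;\sigma)$ already computed via the Selberg trace formula in \eqref{virtual11}. From the definition \eqref{aoperator}, the operator $A(\sigma)$ is the direct sum $\bigoplus_\nu A_\nu$ on $E(\sigma)$ shifted by the scalar $c(\sigma)$, so that $e^{-tA(\sigma)} = e^{-tc(\sigma)}\bigoplus_\nu e^{-tA_\nu}$. The canonical grading \eqref{gradedb} on $E(\sigma)$ is declared to be by the sign of $m_\nu(\sigma)$; since $m_\nu(\sigma)\in\{0,\pm 1\}$ for $\sigma\neq\sigma_0$, and $m_{\nu_0}(\sigma_0)=2$ sits entirely in the positive summand, the supertrace collapses to
\[
\Tr_s\!\left(e^{-tA(\sigma)}\right) \;=\; e^{-tc(\sigma)}\sum_{\nu\in\hat K} m_\nu(\sigma)\,\Tr\!\left(e^{-tA_\nu}\right) \;=\; e^{-tc(\sigma)}\,K(t;\sigma),
\]
where the second equality is just the definition \eqref{virtual1}.

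The remaining step is to insert \eqref{virtual11}. The prefactor $e^{tc(\sigma_k)}$ appearing on the right of \eqref{virtual11} cancels the factor $e^{-tc(\sigma)}$ arising from the shift, leaving exactly the two terms asserted in the proposition: the identity orbital contribution $2\vol(X)\int_\R e^{-t\lambda^2}P_\sigma(i\lambda)\,d\lambda$ and the hyperbolic sum over nontrivial conjugacy classes weighted by $\ell(\gamma) n_\Gamma(\gamma)^{-1} L_{\sym}(\gamma;\sigma)(4\pi t)^{-1/2}e^{-\ell(\gamma)^2/(4t)}$.

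In this formulation essentially all of the analytic work has already been done. The Selberg trace formula \eqref{anator3} applied to the Harish-Chandra Schwartz function $h_t^\sigma = \sum_\nu m_\nu(\sigma) h_t^\nu$, together with the Plancherel formula \eqref{plachform}, the character identity \eqref{globalch1} obtained via Frobenius reciprocity, and the Casimir eigenvalue \eqref{ucasimir1}, already yields \eqref{virtual11}. The proposition is therefore a repackaging, and the only point demanding care is the alignment of the shift $c(\sigma)$ with the exponential extracted in \eqref{virtual11} and the verification that the canonical grading produces the signed sum $\sum_\nu m_\nu(\sigma)\Tr(e^{-tA_\nu})$, including the exceptional multiplicity $2$ for $\sigma=\sigma_0$.
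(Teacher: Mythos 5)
Your argument is correct and is essentially the paper's own proof: the paper derives \eqref{virtual11} by applying the Selberg trace formula and Plancherel formula to $h_t^\sigma$, and then obtains the proposition in one line by combining \eqref{virtual11} with the definitions \eqref{aoperator} and \eqref{virtual1}, exactly the repackaging you describe. One small remark worth keeping in mind: for $\sigma=\sigma_0$ the identification $\Tr_s\bigl(e^{-tA(\sigma)}\bigr)=e^{-tc(\sigma)}\sum_\nu m_\nu(\sigma)\Tr\bigl(e^{-tA_\nu}\bigr)$ requires that $E(\sigma_0)$ carry $E_{\nu_0}$ with multiplicity two in the positive summand (so that the supertrace records the coefficient $m_{\nu_0}(\sigma_0)=2$), a convention which the notation in \eqref{bundle} leaves implicit; this does not affect the paper's use of the proposition, which relies on \eqref{seldet2} rather than \eqref{seldet1} in the $\sigma_0$ case.
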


\section{The functional equation of the Selberg zeta function}
\label{sect-funcequ}
\setcounter{equation}{0}

One of the main ingredients of the proof of Theorem \ref{asymptotic} is the
functional equation \eqref{functequ1} satisfied by the Selberg zeta
function $Z(s,\sigma)$. In particular, it is important to determine the sign 
in the exponential factor.  We include a proof of the functional 
equation for the symmetrized Selberg zeta function which suffices for our
purpose.

Let $\sigma\in\hat M$. Note that $A(\sigma)$ is a second order elliptic 
differential operator on a compact manifold. Therefore it is essentially
self-adjoint and the unique self-adjoint extension of $A(\sigma)$
has pure point spectrum consisting of a sequence of eigenvalues 
$\lambda_1\le\lambda_2\le\cdots\to\infty$ of finite multiplicities. It follows
from Weyl's law that
\begin{equation}\label{weyllaw}
\sum_{\lambda_i>0}\lambda_i^{-2}<\infty.
\end{equation}
Therefore the resolvent $(A(\sigma)+s^2)^{-1}$, $\Re(s^2)\gg 0$, is a 
Hilbert-Schmidt operator. Let $\Re(s_i^2)\gg0$ ,$i=1,2$. By the resolvent 
equation we have
\[
(A(\sigma)+s_1^2)^{-1}-(A(\sigma)+s_2^2)^{-1}=(s_2^2-s_1^2)
(A(\sigma)+s_1^2)^{-1}\circ (A(\sigma)+s_2^2)^{-1}.
\]
Thus the right hand side is a product of Hilbert-Schmidt operators and 
therefore, it is a trace class operator. 
Hence $(A(\sigma)+s_1^2)^{-1}-(A(\sigma)+s_2^2)^{-1}$ is a trace class 
operator. Now observe that
\[
(A(\sigma)+s^2)^{-1}=\int_0^\infty e^{-ts^2}e^{-tA(\sigma)}\;dt.
\]
Furthermore we have the heat expansion 
\begin{equation}\label{heatexp}
\Tr\left(e^{-tA(\sigma)}\right)\sim\sum_{j\ge0} a_jt^{-3/2+j}
\end{equation}
as $t\to+0$. Let $\Re(s^2),\Re(s_0^2)\gg0$. Then it follows from \eqref{heatexp} 
that
\begin{equation}\label{resolvent}
\Tr_s\left((A(\sigma)+s^2)^{-1}-(A(\sigma)+s_0^2)^{-1}\right)=
\int_0^\infty(e^{-ts^2}-e^{-ts_0^2})\Tr_s\left(e^{-tA(\sigma)}\right)\;dt.
\end{equation}
Now we replace $\Tr_s\left(e^{-tA(\sigma)}\right)$ by the right hand side of 
\eqref{virtual8}. First note that for 
$\Re(s)>0$ we have
\begin{equation}\label{lapltr1}
\int_0^\infty e^{-ts^2}\frac{e^{-\ell(\gamma)^2/(4t)}}{\sqrt{4\pi t}}=
\frac{1}{2s}e^{-s\ell(\gamma)}
\end{equation}
Furthermore by Cauchy's theorem we have
\begin{equation}\label{lapltr2}
\begin{split}
\int_0^\infty (e^{-ts^2}-e^{-ts_0^2})
\left(\int_\R e^{-t\lambda^2}P_\sigma(i\lambda)\;d\lambda\right)\;dt
&=\int_\R\frac{s_0^2-s^2}{(\lambda^2+s^2)(\lambda^2+s_0^2)}P(i\lambda)
\;d\lambda\\
&=\frac{\pi}{s}P_\sigma(s)-\frac{\pi}{s_0}P_\sigma(s_0).
\end{split}
\end{equation}
For the last equality we used that $P_\sigma(s)$ is an even polynomial.
By \eqref{lapltr1} and \eqref{lapltr2}, we get
\begin{equation}
\begin{split}
&\Tr_s\left((A(\sigma)+s^2)^{-1}-(A(\sigma)+s_0^2)^{-1}\right)=
2\pi\vol(X) \left(\frac{P_\sigma(s)}{s}-\frac{P_\sigma(s_0)}{s_0}\right)\\
&\hskip3truecm+\frac{1}{2s}\sum_{[\gamma]\neq e}\frac{\ell(\gamma)}
{n_\Gamma(\gamma)}
L_{\sym}(\gamma;\sigma)e^{-s\ell(\gamma)}-\frac{1}{2s_0}\sum_{[\gamma]\neq e}
\frac{\ell(\gamma)}{n_\Gamma(\gamma)}L_{\sym}(\gamma;\sigma)e^{-s_0\ell(\gamma)}.
\end{split}
\end{equation}
By \eqref{logzeta} we have
\[
\sum_{[\gamma]\neq e}\frac{\ell(\gamma)}{n_\Gamma(\gamma)}
L_{\sym}(\gamma;\sigma)e^{-s\ell(\gamma)}=\frac{Z^\prime(s,\sigma)}{Z(s,\sigma)}
+\frac{Z^\prime(s,w_A\sigma)}{Z(s,w_A\sigma)},
\]
which is the logarithmic derivative of the symmetrized Selberg zeta function
$S(s,\sigma)$ defined by \eqref{symmsel}. Thus we get
\begin{equation}\label{lapltr3}
\begin{split}
\Tr_s\left((A(\sigma)+s^2)^{-1}-(A(\sigma)+s_0^2)^{-1}\right)=&
2\pi\vol(X) \left(\frac{P_{\sigma}(s)}{s}-\frac{P_{\sigma}(s_0)}{s_0}\right)\\
&+\frac{1}{2s}\frac{S^\prime(s,\sigma)}{S(s,\sigma)}
-\frac{1}{2s_0}\frac{S^\prime(s_0,\sigma)}{S(s_0,\sigma)}.
\end{split}
\end{equation}
Put
\[
\Xi(s,\sigma)=\exp\left(4\pi\vol(X)\int_0^s P_\sigma(r)\;dr\right)\;
S(s,\sigma).
\]
Then \eqref{lapltr3} can be rewritten as
\begin{equation}\label{lapltr7}
\Tr_s\left((A(\sigma)+s^2)^{-1}-(A(\sigma)+s_0^2)^{-1}\right)=
\frac{1}{2s}\frac{\Xi^\prime(s,\sigma)}{\Xi(s,\sigma)}-
\frac{1}{2s_0}\frac{\Xi^\prime(s_0,\sigma)}{\Xi(s_0,\sigma)}.
\end{equation}
From this equality one can deduce the existence of the meromorphic extension
of $S(s,\sigma)$ and determine the location of the singularities, i.e.,
zeros and poles of $S(s,\sigma)$. Let $\lambda_1<\lambda_2<\cdots$ be the
eigenvalues of $A(\sigma)$. For each $\lambda_j$ let $\E(\lambda_j)$ be
the eigenspace of $A(\sigma)$ with eigenvalue $\lambda_j$. Put
\[
m_s(\lambda_j,\sigma)=\dim_{\gr}\E(\lambda_j).
\]
If $\lambda_j<0$, we choose the square root $\sqrt{\lambda_j}$ which has
positive imaginary part. Put
\[
s_j^\pm=\pm i\sqrt{\lambda_j},\quad j\in\N.
\]
\begin{prop}\label{prop-merom}
The Selberg zeta function $S(s,\sigma)$, defined for $\Re(s)>2$ by 
\eqref{selzeta}, has a meromorphic extension to $s\in\C$. 
The set of singularities of $S(s,\sigma)$ equals $\{s_j^\pm\colon j\in\N\}$. 
If $\lambda_j\neq 0$, then  the order of $S(s,\sigma)$ at both $s_j^+$ and 
$s_j^-$ is equal to $m_s(\lambda_j,\sigma)$. The order of the singularity at
$s=0$ is $2m_s(0,\sigma)$. 
\end{prop}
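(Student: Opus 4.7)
The strategy is to combine the identity \eqref{lapltr7} with the spectral resolution of $A(\sigma)$. Since $A(\sigma)$ is self-adjoint with discrete spectrum and the difference of resolvents $(A(\sigma)+s^2)^{-1}-(A(\sigma)+s_0^2)^{-1}$ is of trace class (as noted before \eqref{heatexp}), the left-hand side of \eqref{lapltr7} admits the spectral expansion
\[
\Tr_s\left((A(\sigma)+s^2)^{-1}-(A(\sigma)+s_0^2)^{-1}\right) = (s_0^2-s^2)\sum_j\frac{m_s(\lambda_j,\sigma)}{(\lambda_j+s^2)(\lambda_j+s_0^2)}.
\]
By \eqref{weyllaw} and the trivial bound $|m_s(\lambda_j,\sigma)|\le\dim\E(\lambda_j)$, this series converges absolutely and locally uniformly away from the discrete set $\{s_j^\pm:j\in\N\}$, and thus defines a meromorphic function on $\C$ whose only singularities lie in $\{s_j^\pm\}$.

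Next I would analyze the singularities of $\Xi'(s,\sigma)/\Xi(s,\sigma)$ by rewriting \eqref{lapltr7} as
\[
\frac{\Xi'(s,\sigma)}{\Xi(s,\sigma)} = 2s\cdot\Tr_s\left((A(\sigma)+s^2)^{-1}-(A(\sigma)+s_0^2)^{-1}\right) + \frac{s}{s_0}\frac{\Xi'(s_0,\sigma)}{\Xi(s_0,\sigma)}.
\]
From the spectral expansion above, the function $\Xi'/\Xi$ has only simple poles at the points $s_j^\pm$. A direct residue computation gives: for $\lambda_j\neq 0$ the two distinct simple poles $s_j^\pm$ of $2s\cdot m_s(\lambda_j,\sigma)/(\lambda_j+s^2)$ each have residue $m_s(\lambda_j,\sigma)$, while for $\lambda_j=0$ the term $2s\cdot m_s(0,\sigma)/s^2=2m_s(0,\sigma)/s$ contributes a simple pole at $s=0$ with residue $2m_s(0,\sigma)$. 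In particular, all residues of $\Xi'/\Xi$ are integers.

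To conclude, I would integrate $\Xi'(s,\sigma)/\Xi(s,\sigma)$ along a path from a base point in the half-plane $\Re(s)>2$, where $\Xi(s,\sigma)$ is holomorphic and nowhere zero by \eqref{selzeta} and \eqref{symmsel}. Since all residues of the logarithmic derivative are integers, this integration produces a meromorphic extension of $\Xi(s,\sigma)$ to all of $\C$, with a zero or pole of order $m_s(\lambda_j,\sigma)$ at each $s_j^\pm$ when $\lambda_j\neq 0$, and of order $2m_s(0,\sigma)$ at $s=0$. Because the prefactor $\exp(4\pi\vol(X)\int_0^sP_\sigma(r)\,dr)$ relating $\Xi(s,\sigma)$ and $S(s,\sigma)$ is entire and nowhere vanishing, the divisor of $S(s,\sigma)$ is identical, which gives the proposition. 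The main technical point is justifying the termwise extraction of poles and residues in the spectral sum; this follows from the absolute, locally uniform convergence of the series away from $\{s_j^\pm\}$, which in turn rests on \eqref{weyllaw} and the telescoping character of the summand.
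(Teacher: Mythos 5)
Your proof is correct and follows essentially the same route as the paper's: expand the left-hand side of \eqref{lapltr7} spectrally, use \eqref{weyllaw} for locally uniform convergence away from $\{s_j^\pm\}$, read off the residues of $\Xi'/\Xi$ from the partial fraction identity $2s/(s^2+\lambda_j)=1/(s-s_j^+)+1/(s-s_j^-)$, treat $\lambda_j=0$ separately, and pass from $\Xi$ to $S$ via the entire nonvanishing exponential prefactor. The one place where you are more explicit than the paper is in spelling out why integer residues of the logarithmic derivative yield a single-valued meromorphic extension of $\Xi$ by integration — the paper leaves this step implicit — but the underlying argument is the same.
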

\begin{proof}
The left hand side of \eqref{lapltr7} equals
\[
\sum_{j=1}^\infty m_s(\lambda_j,\sigma)\left\{\frac{1}{s^2+\lambda_j}-
\frac{1}{s_0^2+\lambda_j}\right\}.
\]
By \eqref{weyllaw} the series converges absolutely and uniformly on compact
subsets which shows that it is a meromorphic function of $s\in\C$ and
the only poles are simple and occur exactly at the points $\{s_j^\pm\colon
j\in\N\}$. Hence the logarithmic derivative of $\Xi(s,\sigma)$ is a meromorphic
function with the same poles. Let $\lambda_j\neq0$. Then
\[
\frac{2s}{s^2+\lambda_j}=\frac{1}{s-s_j^+}+
\frac{1}{s-s_j^-}.
\]
It follows that $s_j^\pm$ are simple poles of $\Xi^\prime(s,\sigma)\cdot
\Xi(s,\sigma)^{-1}$ with residue $m_s(\lambda_j,\sigma)$. Hence the order of
$\Xi(s,\sigma)$ at $s^\pm_j$ equals $m_s(\lambda_j,\sigma)$. In the same way
it follows that the order of $\Xi(s,\sigma)$ at $s=0$ equals $2m_s(0,\sigma)$.
\end{proof}
Now subtract from \eqref{lapltr7} the same equation for $-s$ and multiply by 
$2s$. Then we get
\[
\frac{\Xi^\prime(s,\sigma)}{\Xi(s,\sigma)}+\frac{\Xi^\prime(-s,\sigma)}
{\Xi(-s,\sigma)}=0,
\]
which shows that the logarithmic derivative of $\Xi(s)\cdot\Xi(-s)^{-1}$ 
equals zero. 
Therefore $\Xi(s)\cdot\Xi(-s)^{-1}$ is 
constant. By Proposition \ref{prop-merom}  the order of $S(s,\sigma)$ at zero 
is even. Hence
\[
\lim_{s\to 0}\frac{\Xi(s)}{\Xi(-s)}=1.
\]
This implies $\Xi(s)=\Xi(-s)$. Since $P_\sigma(z)$ is even, we obtain the 
following functional equation for $S(s,\sigma)$:
\begin{equation}\label{functequ3}
S(s,\sigma)=\exp\left(-8\pi\vol(X)\int_0^s P_\sigma(r)\;dr\right)\;
S(-s,\sigma).
\end{equation}
Note that $\overline{Z(s,\sigma_m)}=Z(\overline s,\sigma_{-m})$. Hence for 
$s\in\R$ we have $S(s,\sigma)=|Z(s,\sigma)|^2$. Then \eqref{functequ3} is
reduced to
\begin{equation}\label{functequ4}
|Z(s,\sigma)|=\exp\left(-4\pi\vol(X)\int_0^s P_\sigma(r)\;dr\right)\;
|Z(-s,\sigma)|,\quad s\in\R.
\end{equation}

\section{The determinant formula}\label{detformula}
\setcounter{equation}{0}

By \cite[Theorem 3.19]{BO} the twisted Selberg zeta 
function can be expressed as a graded regularized determinant of $A(\sigma)$.
We include a simple proof of this formula for our case.

First we recall the notion of the graded regularized determinant of an 
elliptic self-adjoint operator.
Let $E=E^+\oplus E^-$ be a $\Z/2\Z$-graded Hermitian vector bundle over a 
compact
Riemannian manifold. Let $P\colon C^\infty(Y,E)\to  C^\infty(Y,E)$ be an 
elliptic differential operator which is symmetric and bounded from below. 
Assume that $P$ 
preserves the grading, i.e., assume that with respect to the decomposition
\[
C^\infty(Y,E)=C^+(Y,E^+)\oplus C^\infty(Y,E^-)
\]
$P$ takes the form 
\[
P=\begin{pmatrix}P^+&0\\0&P^-\end{pmatrix}. 
\]
Then we define the graded determinant $\det_\gr(P)$ of $P$ by
\begin{equation}\label{grdet}
\det_{\gr}(P)=\frac{\det(P^+)}{\det(P^-)}.
\end{equation}
Given $\sigma\in\hat M$, let $A(\sigma)$ be the elliptic operator defined by
\eqref{aoperator}. It acts in a graded vector bundle. Hence the  graded 
determinant $\det_\gr(s^2+A(\sigma))$ is defined. Let 
$P_{\sigma}(r)$ be the
Plancherel polynomial \eqref{planch2}. By \cite[Theorem 3.19]{BO} the twisted
symmetrized Selberg zeta function $S(s,\sigma)$ can be expressed by the graded 
determinant as follows.
\begin{prop}
We have
\begin{equation}\label{seldet1}
S(s;\sigma)=\det_\gr\left(s^2+A(\sigma)\right)\exp\left(-4\pi\vol(X)
\int_0^sP_{\sigma}(r)\;dr\right),\quad \sigma\neq\sigma_0,
\end{equation} 
and
\begin{equation}\label{seldet2}
Z(s;\sigma_0)=\det\left(s^2-1+\Delta\right)\exp\left((6\pi)^{-1}\vol(X)
s^3\right),
\end{equation}
where $\Delta$ is the Laplace operator on $C^\infty(X)$ and $\det$ is the
usual regularized determinant. 
\end{prop}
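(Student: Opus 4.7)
The plan is to compute $\log\det_\gr(s^2+A(\sigma))$ directly via the Mellin transform of the heat supertrace formula in Proposition~\ref{prop-virtual}, and to identify the two contributions (Plancherel and geodesic) with the two factors on the right-hand side of \eqref{seldet1}.

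By definition,
\[
\log\det_\gr\bigl(s^2+A(\sigma)\bigr) = -\frac{d}{dz}\bigg|_{z=0}\zeta_\gr\bigl(z;s^2+A(\sigma)\bigr),\qquad
\zeta_\gr\bigl(z;s^2+A(\sigma)\bigr) = \frac{1}{\Gamma(z)}\int_0^\infty t^{z-1}e^{-ts^2}\Tr_s\bigl(e^{-tA(\sigma)}\bigr)\,dt
\]
for $\Re(z)\gg 0$, continued meromorphically to $z=0$. Substituting the two terms from \eqref{virtual8} splits $\zeta_\gr$ into a Plancherel (identity) contribution and a geodesic contribution, which I would treat separately.

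For the Plancherel contribution, after exchanging the order of integration and using Euler's Gamma integral in the $t$-variable one gets $2\vol(X)\int_\R P_\sigma(i\lambda)(s^2+\lambda^2)^{-z}\,d\lambda$. Since $P_\sigma(i\lambda)$ is an even polynomial in $\lambda$, the beta-function identity reduces this to a finite sum of terms $s^{1+2k-2z}\Gamma(z-k-\tfrac{1}{2})/\Gamma(z)$, each vanishing at $z=0$ and contributing an $s^{2k+1}$ monomial upon differentiation. Direct computation shows that the total contribution to $-\zeta'_\gr(0)$ is precisely $4\pi\vol(X)\int_0^s P_\sigma(r)\,dr$, matching the exponential prefactor on the right of \eqref{seldet1}. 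For the geodesic contribution, the Mellin transform of the Gaussian kernel $(4\pi t)^{-1/2}e^{-\ell(\gamma)^2/(4t)-s^2 t}$ yields a Macdonald $K$-Bessel function, which at $z=0$ reduces to $\ell(\gamma)^{-1}e^{-s\ell(\gamma)}$ via the identity $K_{-1/2}(x)=\sqrt{\pi/(2x)}\,e^{-x}$. Summing and recognizing the resulting series as $-\log S(s,\sigma)$ via the log-expansion \eqref{logzeta} together with \eqref{symmsel}, the geodesic contribution to $\log\det_\gr(s^2+A(\sigma))=-\zeta'_\gr(0)$ equals $\log S(s,\sigma)$. Combining the two contributions and exponentiating gives \eqref{seldet1}.

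The main obstacle is justifying the termwise Mellin transform and the interchange of summation over $[\gamma]$ with the analytic continuation in $z$; the length-spectrum growth estimate \eqref{growth} together with standard bounds on the Mellin kernel is what ensures uniform convergence in a suitable half-plane of $z$. The exceptional case $\sigma = \sigma_0$ is handled by the same argument: here $m_{\nu_0}(\sigma_0) = 2$ places two copies of $E_{\nu_0}$ in the positive grading, so $A(\sigma_0)$ acts as $\Delta-1$ on each copy and $\det_\gr(s^2+A(\sigma_0)) = \det(s^2-1+\Delta)^2$. Since $w_A\sigma_0 = \sigma_0$ we have $S(s,\sigma_0) = Z(s,\sigma_0)^2$; using $\int_0^s P_{\sigma_0}(r)\,dr = -s^3/(12\pi^2)$ and taking the square root of the formula just established yields \eqref{seldet2}.
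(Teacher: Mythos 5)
Your proposal is correct and takes a genuinely different, arguably more direct route than the paper. The paper's proof differentiates $\log\det_\gr(A(\sigma)+s^2)$ in $s$, uses the resolvent trace identity \eqref{lapltr3} from the functional-equation section to equate that derivative with $\frac{d}{ds}\log S(s,\sigma)+4\pi\vol(X)P_\sigma(s)+bs$, integrates to obtain \eqref{detform4} up to unknown constants $b,c$, and then fixes $b=c=0$ by examining the $s\to\infty$ behavior of both sides along $\R$; in that last step the Mellin transform of the Plancherel term is computed exactly (as you do), but the geodesic term is only bounded by $O(e^{-cs^2})$, which suffices to kill the constants. You instead compute $-\zeta_\gr'(0)$ directly from \eqref{virtual8}, evaluating the geodesic contribution exactly via the $K_{-1/2}$-Bessel identity; the $\ell(\gamma)$ cancels against the orbital weight, the remaining series is exactly $-\log S(s,\sigma)$ by \eqref{logzeta} and \eqref{symmsel}, and the Plancherel piece gives $4\pi\vol(X)\int_0^s P_\sigma$. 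This bypasses the integration step and the determination of constants entirely, making the two contributions on the right-hand side of \eqref{seldet1} appear term by term from the two pieces of the trace formula. Your reduction of the $\sigma_0$ case via $m_{\nu_0}(\sigma_0)=2$ and $S(s,\sigma_0)=Z(s,\sigma_0)^2$ is also clean, where the paper simply declares the proof "similar." The one thing worth stating a bit more explicitly is that the Plancherel piece contributes nothing to $\zeta_\gr(0)$ (each $\Gamma(z-j-1/2)/\Gamma(z)$ factor has a simple zero at $z=0$), which is what makes the regularity at $z=0$ and the clean separation of the two contributions work — you gesture at this but do not say it outright.
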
 
\begin{proof} We give a simple proof of this formula. Let
$\sigma\in\hat M$ . For $\Re(s^2)\gg 0$ let
\begin{equation}\label{dzeta}
\zeta(z,s)=\int_0^\infty e^{-ts^2}\Tr_s\left(e^{-tA(\sigma)}\right)t^{z-1}\;dt.
\end{equation}
The integral converges absolutely and uniformly on compact subsets of the
half-plane $\Re(z)>3/2$.  It admits an 
extension to a meromorphic function of $z\in\C$ which is differentiable
in $s$. It is regular at $z=0$ and we have
\begin{equation}\label{lauexp2}
\zeta(z,s)= -\log \det(A(\sigma)+s^2)+O(z).
\end{equation}
Furthermore for $\Re(z)>3/2$ we have
\begin{equation}
-\frac{1}{2s}\frac{d}{ds}\zeta(z,s)=\int_0^\infty e^{-ts^2}
\Tr_s\left(e^{-tA(\sigma)}\right)t^{z}\;dt.
\end{equation}
Thus by \eqref{lauexp2} we get 
\begin{equation}
\begin{split}
\frac{1}{2s}\frac{d}{ds}\log\det_{\gr}\left(A(\sigma)+s^2\right)&-
\frac{1}{2s_0}\frac{d}{ds}\log\det_{\gr}\left(A(\sigma)+s^2\right)\big|_{s=s_0}\\
&=\lim_{z\to0}\left(-\frac{1}{2s}\frac{d}{ds}\zeta(z,s)
+\frac{1}{2s_0}\frac{d}{ds}\zeta(z,s)\big|_{s=s_0}\right)\\
&=\int_0^\infty(e^{-ts^2}-e^{-ts_0^2})\Tr_s\left(e^{-tA(\sigma)}\right)\;dt.
\end{split}
\end{equation}
Assume that $\sigma\neq\sigma_0$. 
Together with \eqref{resolvent} and \eqref{lapltr3} we get 
\begin{equation}
\frac{d}{ds}\log\det_{\gr}\left(A(\sigma)+s^2\right)=\frac{d}{ds}\log S(s,\sigma)
+4\pi\vol(X)P_\sigma(s)+bs
\end{equation}
for some $b\in\C$.
Integrating this equality  gives
\begin{equation}\label{detform4}
\log S(s,\sigma)=-4\pi\vol(X)\int_0^s P_\sigma(r)\,dr
+\log\det_{\gr}\left(A(\sigma)+s^2\right)+\frac{b}{2} s^2+c
\end{equation}
for some $c\in\C$. In order to determine the constants $b$ and $c$ we take
$s\in\R$ and consider the asymptotic behavior of both sides of 
\eqref{detform4} as $s\to\infty$. 
By \eqref{logzeta} it follows that $\log S(s,\sigma)\to 0$ as $s\to\infty$.
Next consider the behavior of $\log\det_{\gr}\left(A(\sigma)+s^2\right)$ for
$s\in\R$ and $s\to\infty$. By \eqref{lauexp2} we have
\begin{equation}\label{lndet}
\log\det_{\gr}\left(A(\sigma)+s^2\right)
=-\frac{d}{dz}\left(z\zeta(z,s)\right)\big|_{z=0}.
\end{equation}
Denote the first term on the right hand side of \eqref{virtual8} by 
$I(t,\sigma)$ and the second by $H(t,\sigma)$.
Let $\Re(z)>3/2$. Then by \eqref{virtual8} and \eqref{dzeta} we get
\begin{equation}\label{dzeta1}
\begin{split}
\zeta(z,s)=\int_0^\infty e^{-ts^2} I(t,\sigma)t^{z-1}\;dt
+\int_0^\infty e^{-ts^2} H(t,\sigma)t^{z-1}\;dt.
\end{split}
\end{equation}
It follows from the definition of $H(t,\sigma)$ that the integral 
$\int_0^\infty e^{-ts^2} H(t,\sigma)t^{z-1}\;dt$ is an entire function of $z\in\C$
and for every compact subset $\omega\subset\C$ there exist $C,c>0$ such that
\begin{equation}\label{rapdecay}
\left|\frac{d}{dz}\int_0^\infty e^{-ts^2} H(t,\sigma)t^{z-1}\;dt\right|\le 
C \;e^{-cs^2}\quad z\in\omega,\;s\ge 0.
\end{equation}
To deal with the first integral on the right hand side of \eqref{dzeta1}, 
we note that
\begin{equation}\label{integr}
\int_0^\infty e^{-ts^2}\left(\int_\R e^{-t\lambda^2}\lambda^{2j}\;d\lambda\right)
dt=\Gamma(j+1/2)\Gamma(-j-1/2+z)s^{2j-2z+1}.
\end{equation}
By \eqref{planch2} the Plancherel polynomial $P_\sigma(z)$ is of the form
$P_\sigma(z)=a_1+a_2 z^2$. Using the definition of $I(t,\sigma)$ and
\eqref{integr}, we get
\begin{equation}
\begin{split}
\frac{d}{dz}\biggl(z\int_0^\infty e^{-ts^2} I(t,\sigma)t^{z-1}\;dt\biggr)
\bigg|_{z=0}&=-4\pi\vol(X)\left(a_1 s-\frac{a_2}{3}s^3\right)\\
&=-4\pi\vol(X)\int_0^s P_\sigma(r)\;dr.
\end{split}
\end{equation}
Together with \eqref{lndet}, \eqref{dzeta1}, and \eqref{rapdecay} we obtain
\[
\log\det_{\gr}\left(A(\sigma)+s^2\right)=4\pi\vol(X)\int_0^\infty P_\sigma(r)\;dr
+O(e^{-cs^2})
\]
for $s\in\R$, $s\to\infty$.
This implies that the constants $b$ and $c$ in \eqref{detform4} are zero.
Exponentiating \eqref{detform4}, we get \eqref{seldet1}. The proof of
\eqref{seldet2} is similar.
\end{proof}

\noindent
{\it Remark.}
From the statement of Theorem 3.19 in \cite{BO} it is not apparent that the 
determinant is the graded determinant. However, it is the general
understanding in \cite{BO} that the trace of a trace class operator on a graded
bundle is the super trace corresponding to the grading (see \cite[p. 29]{BO}).
Consequently regularized determinants of elliptic operators on graded bundles
are always understood in \cite{BO} as graded determinants.

\smallskip
Now let $\tau$ be an irreducible, finite-dimensional representation of $G$ 
with highest weight
$\Lambda_\tau=(m,n)$. For $w\in W_G$ let $\sigma_{\tau,w}\in\hat M$ and
$\lambda_{\tau,w}$ be defined by \eqref{wcharacter}. Let
\begin{equation}\label{casimir5}
\Delta(w)=\bigoplus_{\substack{\nu\\m_\nu(\sigma_{\tau,w})\neq0}}A_\nu+\tau(\Omega).
\end{equation}
This is an elliptic operator acting on $C^\infty(X,E(\sigma_{\tau,w}))$. 
Using \eqref{characterw1}, an explicite computation shows that for all 
$w\in W_G$ we have 
\begin{equation}\label{casimir2}
\lambda_{\tau,w}^2+c(\sigma_{\tau,w})=\frac{1}{2}\left(m(m+2)+n(n+2)\right)
=\tau(\Omega).
\end{equation}
Using \eqref{casimir2}, 
and \eqref{casimir5}, it follows that
\begin{equation}\label{casimir4}
A(\sigma_{\tau,w})+\lambda_{\tau,w}^2=\Delta(w)
\end{equation}
as operators on $C^\infty(X,E(\sigma_{\tau,w}))$. Then it follows from 
\eqref{seldet1}  that
\[
S(s-\lambda_{\tau,w};\sigma_{\tau,w})=\det_\gr(s^2-2\lambda_{\tau,w}s+\Delta(w))
\exp\left(-4\pi\vol(X)\int_0^{s-\lambda_{\tau,w}}P_{\sigma_{\tau,w}}(r)\,
dr\right),
\]
if $\sigma_{\tau,w}\neq\sigma_0$. If $\sigma_{\tau,w}=\sigma_0$, we use
\eqref{seldet2}, which leads to a similar formula
\begin{prop}
Let $\tau_\theta\ncong\tau$. There is a constant $c=c(\tau)$ such that
\begin{equation}\label{ruelledet1}
R_\tau(s)R_{\tau_\theta}(s)=e^{c\vol(X)s}\prod_{w\in W_G}
\det_\gr(s^2-2\lambda_{\tau,w}s+\Delta(w))^{(-1)^{\ell(w)+1}}.
\end{equation}
\end{prop}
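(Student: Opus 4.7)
The plan is to combine equation \eqref{ruellesymm}, which already expresses $R_\tau(s)R_{\tau_\theta}(s)$ as an alternating product of symmetrized Selberg zeta functions $S(s-\lambda_{\tau,w},\sigma_{\tau,w})$, with the determinant formula \eqref{seldet1} and the Casimir identity \eqref{casimir4}. First I would observe that for $\tau_\theta\ncong\tau$, i.e. $\Lambda_\tau=(m,n)$ with $m\neq n$, the list of characters in \eqref{characterw1} never contains $\sigma_0$: indeed $\sigma_{m-n}$ and $\sigma_{n-m}$ avoid $\sigma_0$ because $m\neq n$, while $\sigma_{\pm(m+n+2)}$ avoid $\sigma_0$ because $m,n\geq 0$. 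Hence \eqref{seldet1} (and not the exceptional formula \eqref{seldet2}) applies uniformly to every factor.

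Next I would substitute $s\mapsto s-\lambda_{\tau,w}$ in \eqref{seldet1} and use \eqref{casimir4}, which turns $(s-\lambda_{\tau,w})^2+A(\sigma_{\tau,w})$ into $s^2-2\lambda_{\tau,w}s+\Delta(w)$. Taking the alternating product over $w\in W_G$ and using \eqref{ruellesymm}, I obtain
\[
R_\tau(s)R_{\tau_\theta}(s)=\exp\bigl(-4\pi\vol(X)\,\Phi_\tau(s)\bigr)\prod_{w\in W_G}\det_{\gr}\bigl(s^2-2\lambda_{\tau,w}s+\Delta(w)\bigr)^{(-1)^{\ell(w)+1}},
\]
where
\[
\Phi_\tau(s)=\sum_{w\in W_G}(-1)^{\ell(w)+1}\int_0^{s-\lambda_{\tau,w}}P_{\sigma_{\tau,w}}(r)\,dr.
\]
It remains to prove that $\Phi_\tau(s)$ is linear in $s$, so that the exponential factor takes the form $e^{c\vol(X)s}$ with $c=c(\tau)$.

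The main step, and the only one requiring real computation, is precisely this linearity. Using \eqref{characterw1} and setting $k_1=m-n$, $k_2=m+n+2$, the four pairs $(\sigma_{\tau,w},\lambda_{\tau,w})$ are $(\sigma_{k_1},k_2/2)$, $(\sigma_{-k_2},-k_1/2)$, $(\sigma_{k_2},k_1/2)$, $(\sigma_{-k_1},-k_2/2)$, with signs $-,+,+,-$ respectively (the identity and the longest element in $W_G$ contribute $-$, the two simple reflections contribute $+$). Since $P_{\sigma_k}(z)=\frac{1}{4\pi^2}(k^2/4-z^2)$ depends only on $k^2$, the integral $F(k,s):=\int_0^s P_{\sigma_k}(r)\,dr=\frac{1}{4\pi^2}(k^2s/4-s^3/3)$ is odd in $s$ and a short computation gives
\[
F(k_2,s+k_1/2)+F(k_2,s-k_1/2)-F(k_1,s+k_2/2)-F(k_1,s-k_2/2)=\frac{(k_2^2-k_1^2)\,s}{4\pi^2}.
\]
The cubic and constant contributions cancel thanks to the matching of shifts $\pm k_1/2,\pm k_2/2$ across the two characters, which is the precise reason one organizes the product using the Weyl-symmetric set \eqref{characterw1}. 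This yields $\Phi_\tau(s)=(k_2^2-k_1^2)s/(4\pi^2)$ and hence the claimed linear exponent with $c=-(k_2^2-k_1^2)/\pi$, completing the proof.

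The main obstacle I anticipate is purely bookkeeping: identifying the correct signs $(-1)^{\ell(w)+1}$ on each of the four Weyl representatives in \eqref{characterw1} and verifying that the two pairs of shifts are symmetric, so that the even-degree terms of $P_\sigma$ produce only a linear-in-$s$ remainder. No new analytic input is needed beyond the formulas \eqref{ruellesymm}, \eqref{seldet1}, and \eqref{casimir4} already established in the paper.
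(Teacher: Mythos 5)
Your proposal is correct and follows essentially the same route as the paper: substitute the determinant formula \eqref{seldet1} together with the Casimir shift \eqref{casimir4} into the symmetrized-Selberg expression \eqref{ruellesymm}, and then verify by direct computation that the accumulated exponential factor $\Phi_\tau(s)=\sum_{w}(-1)^{\ell(w)+1}\int_0^{s-\lambda_{\tau,w}}P_{\sigma_{\tau,w}}(r)\,dr$ is linear in $s$. The paper compresses the last step to ``an explicit computation gives $F(s)=-\pi^{-2}(m+1)(n+1)s$,'' whereas you carry it out: your identification of the four pairs in \eqref{characterw1} with $(\sigma_{\pm k_1},\pm k_2/2)$, $(\sigma_{\pm k_2},\pm k_1/2)$ and the signs $-,+,+,-$ is right, the cubic terms cancel exactly because each character is integrated over the symmetric pair of shifts $\pm k_j/2$, and the result $\Phi_\tau(s)=(k_2^2-k_1^2)s/(4\pi^2)=\pi^{-2}(m+1)(n+1)s$ is linear as required. (Your sign here is actually opposite to the sign stated in the paper's proof; since $k_2^2-k_1^2=4(m+1)(n+1)>0$, your value $\Phi_\tau(s)=+\pi^{-2}(m+1)(n+1)s$ and the resulting $c=-4(m+1)(n+1)/\pi$ appear to be the correct ones, but this has no bearing on the proposition, which asserts only the existence of some constant $c(\tau)$.)
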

\begin{proof}
By assumption we have $\tau=\tau_{m,n}$ with
$m\neq n$. It follows from \eqref{characterw1} that $\sigma_{\tau,w}\ncong\sigma_0$ for
all $w\in W_G$. Put
\begin{equation}\label{func}
F(s)=\sum_{w\in W_G}(-1)^{\ell(w)+1}\int_0^{s-\lambda_{\tau,w}}
P_{\sigma_{\tau,w}}(r)\;dr.
\end{equation}
Then it follows from \eqref{ruellesymm} and \eqref{seldet1} that
\begin{equation}\label{ruelledet2}
R_\tau(s)R_{\tau_\theta}(s)=e^{-4\pi\vol(X)F(s)}\prod_{w\in W_G}
\det_\gr\left(s^2-2s\lambda_{\tau,w}+\Delta(w)\right)^{(-1)^{\ell(w)+1}}.
\end{equation}
Using \eqref{planch2} and \eqref{characterw1}, an explicite computation gives
\[
F(s)=-\pi^{-2}(m+1)(n+1)s.
\]
\end{proof}
Now we consider the case $\tau_\theta=\tau$. Then $\tau=\tau_{m,m}$ for some 
$m\in\N_0$. 
\begin{prop}
Let $m\in\N_0$. There exists a constant $c=c(m)$ such that
\begin{equation}\label{ruelledet3}
R_{\tau_{m,m}}(s)=e^{c\vol(X)s}\cdot\frac{\det\left((s+m+1)^2-1+\Delta\right)
\det\left((s-m-1)^2-1+\Delta\right)}
{\det_{\gr}\left(s^2+A(\sigma_{2m+2})\right)}.
\end{equation}
\end{prop}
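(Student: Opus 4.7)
The plan is to mirror exactly the argument the author just gave for the case $\tau_\theta\ncong\tau$, using the special identity \eqref{ruelle5} in place of \eqref{ruellesymm}. Starting from
\[
R_{\tau_{m,m}}(s)=Z(s-(m+1),\sigma_0)\,Z(s+m+1,\sigma_0)\,S(s,\sigma_{2m+2})^{-1},
\]
I would apply \eqref{seldet2} to each of the two $\sigma_0$-factors and \eqref{seldet1} to the factor $S(s,\sigma_{2m+2})$, noting that $2m+2\neq 0$ so that \eqref{seldet1} is applicable. This produces on the right-hand side precisely the ratio of regularized determinants that appears in \eqref{ruelledet3}, multiplied by an exponential factor whose exponent is
\[
\frac{\vol(X)}{6\pi}\Bigl((s-m-1)^3+(s+m+1)^3\Bigr)+4\pi\vol(X)\int_0^s P_{\sigma_{2m+2}}(r)\,dr.
\]

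The remaining task is to verify that this exponent is linear in $s$, i.e.\ has the form $c(m)\vol(X)\,s$ as claimed. Expanding $(s-a)^3+(s+a)^3=2s^3+6a^2 s$ with $a=m+1$, the first group of terms becomes $\frac{\vol(X)}{3\pi}s^3+\frac{\vol(X)}{\pi}(m+1)^2 s$. Using the explicit form \eqref{planch2}, one has $P_{\sigma_{2m+2}}(r)=\frac{1}{4\pi^2}\bigl((m+1)^2-r^2\bigr)$, so the integral contributes $\frac{\vol(X)}{\pi}\bigl((m+1)^2 s-\tfrac{1}{3}s^3\bigr)$. The two $s^3$ contributions cancel exactly, and the surviving linear-in-$s$ piece becomes $\frac{2(m+1)^2}{\pi}\vol(X)\,s$. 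Thus $c(m)=\frac{2(m+1)^2}{\pi}$ (or more abstractly, just some constant depending only on $m$), which is the content of \eqref{ruelledet3} after exponentiating.

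The main conceptual point — and the only step that requires genuine checking rather than bookkeeping — is that the cubic-in-$s$ terms really do cancel between the two $Z(\cdot,\sigma_0)$ factors and the $S(s,\sigma_{2m+2})^{-1}$ factor. This cancellation is the diagonal analogue of the identity $F(s)=-\pi^{-2}(m+1)(n+1)s$ used in the previous proposition, and it is forced by the fact that both $Z(\cdot,\sigma_0)$ and $S(\cdot,\sigma_{2m+2})$ carry the same type of polynomial-in-$s$ exponential correction coming from the Plancherel measure. Once this cancellation is confirmed, the stated formula follows immediately by exponentiating and absorbing the constant (in $s$) terms, of which there are none here because $(s-a)^3+(s+a)^3$ is odd in $s$. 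No further obstacles arise, since the graded determinant $\det_\gr(s^2+A(\sigma_{2m+2}))$ is already the object produced by \eqref{seldet1}.
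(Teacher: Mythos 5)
Your proposal is correct and follows exactly the paper's own route: apply \eqref{ruelle5}, substitute \eqref{seldet2} into the two $Z(\cdot,\sigma_0)$ factors and \eqref{seldet1} into $S(s,\sigma_{2m+2})^{-1}$, and verify via \eqref{planch2} that the resulting exponent $\frac{1}{6\pi}\bigl((s+m+1)^3+(s-m-1)^3\bigr)+4\pi\int_0^s P_{\sigma_{2m+2}}(r)\,dr$ collapses to the linear term $2\pi^{-1}(m+1)^2 s$. Your computation of the cubic cancellation and the constant $c(m)=2\pi^{-1}(m+1)^2$ agrees with the paper.
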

\begin{proof}
Put
\[
F_m(s)=\frac{1}{6\pi}\left((s+m+1)^3+(s-m-1)^3\right)
+4\pi\int_0^sP_{\sigma_{2m+2}}(r)
\,dr.
\]
Using \eqref{ruelle5}, \eqref{seldet1} and \eqref{seldet2}, it follows that
\[
R_{\tau_{m,m}}(s)=e^{\vol(X)F_m(s)}
\cdot\frac{\det\left((s+m+1)^2-1+\Delta\right)
\det\left((s-m-1)^2-1+\Delta\right)}
{\det_{\gr}\left(s^2+A(\sigma_{2m+2})\right)}.
\]
Using \eqref{planch2}, it follows that $F_m(s)=2\pi^{-1}(m+1)^2 s$.
\end{proof}

\section{Proof of Theorem \ref{theo-ruelle}}\label{heattrace}
\setcounter{equation}{0}

Since \cite{Wo} has not been published yet, we include a proof of Theorem 
\ref{theo-ruelle}. 
Let $\tau\colon G\to\GL(V_\tau)$ be an irreducible finite-dimensional 
representation with associated flat bundle $E_\tau$ equipped with an admissible
metric. Let $\Delta_p(\tau)$ be the Laplacian on $E_\tau$-valued $p$-forms. 
Let 
\begin{equation}\label{anator2}
K(t,\tau):=\sum_{p=1}^3(-1)^p p\Tr\left(e^{-t\Delta_p(\tau)}\right)
\end{equation}
and
\[
q(\tau)=\sum_{p=1}^3(-1)^p p\dim\ker H^p(X,E_\tau).
\]
Then by definition of the analytic torsion we have 
\begin{equation}\label{anator11}
\log T_X(\tau)=\frac{1}{2}\frac{d}{ds}\left(\frac{1}{\Gamma(s)}\int_0^\infty 
\left(K(t,\tau)-q(\tau)\right) t^{s-1}\;dt\right)\bigg|_{s=0},
\end{equation}
where the right hand side is defined near $s=0$ by analytic continuation of the
Mellin transform. The first step of the proof is to apply the trace formula to 
express 
$K(t,\tau)$ in terms of the length of closed geodesics. This is the basis 
for the relation between analytic torsion and the twisted Ruelle zeta 
function. 

Let $\pg$ be the orthogonal complement of $\kf$ in $\gf$ with respect to the
Killing form.  Let $x_0=eK$. Recall that there is a canonical  isomorphism 
$T_{x_0}(G/K)\cong\pg$.
Let $R_\Gamma$ denote the right regular representation 
of $G$ on $L^2(\Gamma\bs G)$ (resp. $C^\infty(\Gamma\bs G)$). 
Using \eqref{iso}, we get a canonical 
isomorphism
\begin{equation}\label{twistforms}
\Lambda^p(X,E_\tau)\cong\left(C^\infty(\Gamma\bs G)\otimes \Lambda^p\pg^*
\otimes V_\tau\right)^K,
\end{equation}
where $K$ acts by $k\in K\mapsto R_\Gamma(k)\otimes \Lambda^p\Ad_\pg^*(k)\otimes
\tau(k)$. There is a similar isomorphism for the space $L^2\Lambda^p(X,E_\tau)$
of $L^2$-sections of $\Lambda^pT^*X\otimes E_\tau$. 
With respect to the isomorphism \eqref{twistforms}, we have the following 
generalization of Kuga's lemma 
\begin{equation}\label{kuga1}
\Delta_p(\tau)=-R_\Gamma(\Omega)\otimes\Id+\tau(\Omega)\Id,
\end{equation}
(see \cite[(6.9)]{MM}), where $\Omega$ is the Casimir element and 
$\tau(\Omega)$ is the Casimir eigenvalue of $\tau$. 

Let $\tilde\Delta_p(\tau)$ be the lift of $\Delta_p(\tau)$ to the universal
covering $\tilde X=G/K$. Let $e^{-t\tilde\Delta_p(\tau)}$, $t>0$,
 be the corresponding heat semigroup. This is
a smoothing operator on 
\[
L^2\Lambda^p(\tilde X;\tilde E_\tau)\cong (L^2(G)\otimes\Lambda^p\mathfrak 
p^*\otimes V_\tau)^K,
\]
which commutes with the action of $G$. Therefore, it is of the form
\begin{displaymath}
 \left( e^{-t\tilde\Delta_p(\tau)}\phi\right)(g)=\int_G  
H^{\tau,p}_t(g^{-1}g')\phi(g') \;dg',\quad 
\phi\in(L^2(G)\otimes\Lambda^p\mathfrak p^*\otimes V_\tau)^K,\;\;g\in G,
\end{displaymath}
where the kernel
$H^{\tau,p}_t\colon G\to\End(\Lambda^p\mathfrak p^*\otimes
V_\tau)$ 
belongs to $C^\infty\cap L^2$ and  satisfies the covariance property
\begin{equation}\label{covar}
H^{\tau,p}_t(k^{-1}gk')=\nu_p(\tau)(k)^{-1} H^{\tau,p}_t(g)\nu_p(\tau)(k'),
\end{equation}
with respect to the representation 
\begin{equation}
\nu_p(\tau):=\Lambda^p\Ad_K^*\otimes \tau\colon K\to 
\GL(\Lambda^p\pg^*\otimes V_\tau).
\end{equation}
Moreover, for all $q>0$ we have
\begin{equation}\label{schwartz1}
H^{\tau,p}_t \in (\mathcal{C}^q(G)\otimes\End(\Lambda^p\mathfrak 
p^*\otimes V_\tau))^{K\times K},
\end{equation}
where $\Co^q(G)$ denotes Harish-Cahndra's $L^p$-Schwartz space. 
The proof is similar to the proof of Proposition 2.4 in \cite{BM}. Let
\[ 
h^{\tau,p}_t(g)=\tr H^{\tau,p}_t(g). 
\] 
Repeating the arguments which we used to prove
\eqref{trace2}, we get
\begin{equation}
\Tr\left(e^{-t\Delta_p(\tau)}\right)=\Tr R_\Gamma(h^{\tau,p}_t).
\end{equation}
Put
\begin{equation}\label{alter}
  k_t^\tau=\sum_{p=1}^3(-1)^pp\, h^{\tau,p}_t.
\end{equation}
By \eqref{anator2} we have
\[
K(t,\tau)=\Tr R_\Gamma(k_t^\tau). 
\]
We can now apply the Selberg trace formula \cite{Wa1}. Let the notation be as 
in \eqref{anator3}. Then we get
\begin{equation}\label{anator3a}
\begin{split}    
K(t,\tau)&=\OP{Vol}(X)k^\tau_t(e)\\
  &\mspace{30mu} +\frac{1}{2\pi}\sum_{[\gamma]\neq e}
\frac{\ell(\gamma)}{n_\Gamma(\gamma)D(\gamma)}\sum_{n\in\Z}
\overline{\sigma_n(m_\gamma)}\int_\R\Theta_{n,\lambda}(k^\tau_t)
 e^{-i\ell(\gamma)\lambda}\;d\lambda,
\end{split}
\end{equation}
where the notation is the same as in \eqref{anator3}. 
The characters $\Theta_{n,\lambda}(k^\tau_t)$ can be computed in the same was 
as in section \ref{sec-bochlapl}.
Let $\pi$ be a unitary representation of $G$ on 
a Hilbert space $\H_\pi$. Set
\[
\tilde \pi(H^{\tau,p}_t)=\int_G \pi(g)\otimes H^{\tau,p}_t(g)\,dg.
\]
This defines a bounded operator on 
$\H_\pi\otimes \Lambda^p\mathfrak p^*\otimes V_\tau$. As in 
\cite[pp. 160-161]{BM} it follows from 
\eqref{covar} that relative to the splitting
\[
\H_\pi\otimes \Lambda^p\mathfrak p^*\otimes V_\tau=
\left(\H_\pi\otimes \Lambda^p\mathfrak p^*\otimes V_\tau\right)^K\oplus
\left[\left(\H_\pi\otimes \Lambda^p\mathfrak p^*\otimes V_\tau\right)^K
\right]^\perp,
\]
$\tilde \pi(H^{\tau,p}_t)$ has the form
\[
\tilde \pi(H^{\tau,p}_t)=\begin{pmatrix}\pi(H^{\tau,p}_t)& 0\\ 0& 0
\end{pmatrix}
\]
with $\pi(H^{\tau,p}_t)$ acting on $\left(\H_\pi\otimes 
\Lambda^p\mathfrak p^*\otimes V_\tau\right)^K$. Using \eqref{kuga1} it follows 
as in \cite[Corollary 2.2]{BM} that
\[
\pi(H^{\tau,p}_t)=e^{t(\pi(\Omega)-\tau(\Omega))}\Id
\]
on $\left(\H_\pi\otimes \Lambda^p\mathfrak p^*\otimes V_\tau\right)^K$.
As in \eqref{equtrace} we get
\begin{equation}\label{equtrace1}
\tr\pi(H^{\tau,p}_t)=\tr\pi(h^{\tau,p}_t).
\end{equation}
Now let $\pi$ be a unitary principal series representation $\pi_{n,\lambda}$. 
Using \eqref{equtrace1} and \eqref{ucasimir} we get 
\begin{equation}\label{fourierinv}
\Theta_{n,\lambda}(h^{\tau,p}_t)=e^{-t(\lambda^2+1-n^2/4+\tau(\Omega))}
  \dim\left(\H_{n}\otimes\Lambda^p\mathfrak p^*\otimes V_\tau\right)^K.
\end{equation}
Denote by $\C_n$ the $M$-module defined by $\sigma_n$.  By Frobenius 
reciprocity \cite[p. 208]{Kn} we have 
\[
\dim\left(\H_{\xi,\lambda}\otimes\Lambda^p\mathfrak p^*\otimes V_\tau\right)^K
=\dim\left(\C_n\otimes\Lambda^p\mathfrak p^*\otimes V_\tau\right)^M.
\]
and by \eqref{alter} we get
\[
\Theta_{n,\lambda}(k_t^\tau)=e^{-t(\lambda^2+1-n^2/4+\tau(\Omega))}
\sum_{p=1}^3(-1)^p p\dim\left(\C_n\otimes\Lambda^p\mathfrak p^*\otimes 
V_\tau\right)^M.
\]
Choose an orthonormal basis of $\pg$ as in \cite[p.9]{Mil}. Using this basis it
follows that as $M$-modules,  $\pg$ and $\af\oplus\nf$ are equivalent. Thus
we get 
\begin{equation}
\sum_{p=1}^{3}(-1)^p p\,\Lambda^p\mathfrak  p^*
=\sum_{p=1}^{3}(-1)^{p}p\left(\Lambda^p\mathfrak n^*
+\Lambda^{p-1}\mathfrak n^*\right)
=\sum_{p=0}^{2}(-1)^{p+1}\Lambda^p\mathfrak n^*.
\end{equation}
Therefore, the Fourier transformation of $k^\tau_t$ is given by
\begin{equation}\label{charac1}
\Theta_{n,\lambda}(k^\tau_t)=e^{-t(\lambda^2+1-n^2/4+\tau(\Omega))}
\sum_{p=0}^{2}(-1)^{p+1}\dim(\C_n\otimes\Lambda^p\mathfrak n^*\otimes V_\tau)^M.
\end{equation}
This formula can be simplified using the real version of Konstant's 
Bott-Borel-Weil theorem \cite{Si}. 
We apply Lemma \ref{thmkostant} to determine the $n\in\Z$ for which 
$\dim(\C_n\otimes\Lambda^p\mathfrak n^*\otimes V_\tau)^M\neq0$. We decompose the
characters on the right hand side of \eqref{kostant1} according to
\eqref{wcharacter}. 
Let $\sigma_{\tau,w}\in\hat M$ and $\lambda_{\tau,w}\in\frac{1}{2}\Z$ be defined
by \eqref{wcharacter}. 
Using \eqref{kostant1} and \eqref{casimir2}, we get
\begin{equation}\label{charac2}
\begin{split}
\sum_{n\in\Z}
\overline{\sigma_n(m_\gamma)}\int_\R\Theta_{n,\lambda}(k^\tau_t)
&e^{-i\ell(\gamma)\lambda}\,d\lambda\\
&=\sum_{w\in W_G}(-1)^{\ell(w)+1}
\sigma_{\tau,w}(m_\gamma)
e^{-t \lambda_{\tau,w}^2}\frac{e^{-\ell(\gamma)^2/(4t)}}{(4\pi t)^{1/2}}.
\end{split}
\end{equation}
Next we consider the contribution of the identity to \eqref{anator3a}. By
\eqref{schwartz1},  $k^\tau_t$ is in $\mathcal{C}^q(G)$ for
all $q>0$. Therefore we can  apply the Plancherel  formula for
$G$ (see \cite[Theorem 11.2]{Kn}).  With respect to the normalizations of Haar
measures used in \cite{Kn}, we have
\begin{displaymath}
  k^\tau_t(e)=\sum_{n\in\Z}\int_{\mathbb R}
\Theta_{n,\lambda}(k^\tau_t)P_{\sigma_n}(i\lambda)\,d\lambda,
\end{displaymath}
where $P_{\sigma_n}(z)$ is the Plancherel polynom \eqref{planch2}.
Repeating the arguments that led to \eqref{charac2}, we get
\begin{equation}
k_t^\tau(e)=\sum_{w\in W_G}(-1)^{\ell(w)+1} e^{-t \lambda_{\tau,w}^2}
\int_{\mathbb R} e^{-t\lambda^2}P_{\sigma_{\tau,w}}(i\lambda)\,d\lambda.
\end{equation}
Combined with  \eqref{anator3a} and \eqref{charac2}, we obtain
\begin{equation}\label{anator4}
\begin{split}
K(t,\tau)=\sum_{w\in W_G}(-1)^{\ell(w)+1} e^{-t \lambda_{\tau,w}^2}
\biggl(\vol(X)&\int_{\R} e^{-t\lambda^2}P_{\sigma_{\tau,w}}(i\lambda)\,d\lambda\\
&\mspace{14mu}+\sum_{\{\gamma\}\neq\{e\}}
\frac{\ell(\gamma)}{n_\Gamma(\gamma)}L(\gamma;\sigma_{\tau,w})
\frac{e^{-\ell(\gamma)^2/(4t)}}{(4\pi t)^{1/2}}\biggr),
\end{split}
\end{equation}   
where $L(\gamma,\sigma)$ is defined by 
\begin{equation}\label{lefschetz1}
L(\gamma,\sigma)=\frac{\sigma(m_\gamma)
e^{-\ell(\gamma)}}
{\det\left(\Id-\Ad(m_\gamma a_\gamma)_{\overline \nf}\right)}.
\end{equation}

Unfortunately, the constants 
$\lambda_{\tau,w}$ appearing in the exponential factors prevent us from 
applying the Mellin transform to this formula directly. This problem occurred
already in \cite{Fr}. To overcome this problem we use the auxiliary operators
introduced in section \ref{sec-bochlapl}.

Using \eqref{virtual11} and \eqref{casimir2}, it follows 
that for $w\in W_G$ we have
\begin{equation}\label{virtual5}
\begin{split}
e^{\tau(\Omega)t}K(t,\sigma_{\tau,w})=e^{-t\lambda_{\tau,w}^2}
\biggl(2&\vol(X)\int_\R e^{-t\lambda^2}
P_{\sigma_{\tau,w}}(i\lambda)\;d\lambda\\
&+\sum_{[\gamma]\neq 1}\frac{\ell(\gamma)}{n_\Gamma(\gamma)}
(L(\gamma;\sigma_{\tau,w})+L(\gamma;w_A(\sigma_{\tau,w})))\frac{e^{-\ell(\gamma)^2/(4t)}}{(4\pi t)^{1/2}}\biggr).
\end{split}
\end{equation}
Next observe that by \eqref{characterw1} there exists a decomposition 
\[
W_G=W_{0}\sqcup W_{1},
\]
with $|W_{i}|=2$, $i=1,2$, and a bijection $j\colon W_{0}\to W_{1}$ such
that for $w\in W_{0}$ we have
\[
\sigma_{\tau,j(w)}=w_A(\sigma_{\tau,w}),\quad \lambda_{\tau,j(w)}=
-\lambda_{\tau,w}.
\]
Hence by \eqref{anator4} and \eqref{virtual5} we get
\begin{equation}\label{anator5}
K(t,\tau)=\frac{1}{2}
\sum_{w\in W_G}(-1)^{\ell(w)+1}e^{\tau(\Omega)t}K(t;\sigma_{\tau,w}).
\end{equation}
This equality can be expressed in a slightly different way as follows. 
Denote by $\Tr_s$ the supertrace with respect to the grading of 
$E(\sigma_{\tau,w})$. Using the definition of $K(t,\sigma_{\tau,w})$ by
\eqref{virtual1} and the definition of $\Delta(w)$ by \eqref{casimir5}, we get
\begin{equation}\label{anator6}
K(t,\tau)=\frac{1}{2}
\sum_{w\in W_G}(-1)^{\ell(w)+1}\Tr_s\left(e^{-t\Delta(w)}\right).
\end{equation}

To continue we need to determine the location of the spectrum of the operators 
$A(\sigma)$.
\begin{lem}\label{spectrum}
For $\sigma\in\hat M$ we have $A(\sigma)\ge -1$. Moreover, if 
$k\notin\{0,\pm2\}$, then $A(\sigma_k)>-1$. 
\end{lem}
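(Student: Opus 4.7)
The plan is to analyze the spectrum of $A(\sigma)$ via the spectral decomposition of $L^2(\Gamma \bs G)$ into unitary irreducible representations of $G$, and then bound $\pi(\Omega)$ case by case from above.

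First, I would use that $\Gamma$ is cocompact to write $L^2(\Gamma \bs G) = \bigoplus_\pi m_\Gamma(\pi) \H_\pi$ as a discrete Hilbert sum over $\hat G$. Since each $A_\nu$ is induced by the scalar $-R(\Omega)$ on $(L^2(\Gamma \bs G) \otimes V_\nu)^K$, it acts as multiplication by $-\pi(\Omega)$ on the $\pi$-isotypic component indexed by $\nu$. Therefore the spectrum of $A(\sigma_k) = \bigoplus_{\nu : m_\nu(\sigma_k) \ne 0} A_\nu + c(\sigma_k)$ is contained in
\[
\{c(\sigma_k) - \pi(\Omega) : \pi \subset L^2(\Gamma \bs G), \; [\pi|_K : \nu_l] > 0 \text{ for some } l \in \{|k|, |k|-2\}\},
\]
and the bound $A(\sigma_k) \ge -1$ reduces to showing $\pi(\Omega) \le k^2/4$ for every relevant $\pi$.

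Next, I would go through the three families comprising the unitary dual of $G = \SL(2,\C)$. For the trivial representation, $\pi(\Omega) = 0 \le k^2/4$, and the $K$-type constraint $\nu_0 \in \{\nu_{|k|}, \nu_{|k|-2}\}$ forces $k \in \{0, \pm 2\}$. For the unitary principal series $\pi_{n, \lambda}$, the eigenvalue \eqref{ucasimir} reads $\pi_{n,\lambda}(\Omega) = -\lambda^2 + n^2/4 - 1$, and the Frobenius-reciprocity constraint $|n| \le l \le |k|$ gives $\pi_{n,\lambda}(\Omega) \le k^2/4 - 1 < k^2/4$. For the complementary series $\pi^c_x$ with $0 < x < 2$ and Casimir eigenvalue \eqref{casimir8}, the cocompactness of $\Gamma$ combined with $\Delta = A_{\nu_0} \ge 0$ on $X$ forces $1 - x^2 \ge 0$, whence $x \le 1$ and $\pi^c_x(\Omega) \le 0 \le k^2/4$.

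Putting these together establishes $A(\sigma) \ge -1$ for every $\sigma \in \hat M$. For the strict inequality, equality $\pi(\Omega) = k^2/4$ is unattainable by principal series (strict above) and by complementary series whenever $k \ne 0$; the trivial representation attains equality only when $k = 0$ and contributes to $\Spec(A(\sigma_k))$ only when $k \in \{0, \pm 2\}$. Hence for $k \notin \{0, \pm 2\}$ no summand yields the value $-1$, so $A(\sigma_k) > -1$. The main subtlety is the exclusion of complementary-series parameters $x > 1$ from $L^2(\Gamma \bs G)$; this is precisely where cocompactness of $\Gamma$ is essential and constitutes the only substantive input beyond pure representation theory.
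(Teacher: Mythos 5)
Your argument follows the same route as the paper: decompose $L^2(\Gamma\bs G)$ into irreducibles, observe that $A_\nu$ acts by $-\pi(\Omega)$ on each isotypic component, and then walk through the unitary dual of $\SL(2,\C)$ (trivial, principal, complementary) using Frobenius reciprocity to constrain which $\pi$ contribute to which $\sigma_k$. The one place you genuinely deviate is in bounding the complementary series: the paper simply cites the description of $\hat G$ with $0<x<1$, so that $-\pi_x^c(\Omega)=1-x^2>0$ directly, whereas you start from the looser range $0<x<2$ of \S2.4 and then invoke cocompactness plus $\Delta=A_{\nu_0}\ge 0$ on the spherical vector to force $x\le 1$, hence $\pi_x^c(\Omega)\le 0$. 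This is a valid substitute (for $\SL(2,\C)$ all complementary series are spherical, so each one really is seen by the function Laplacian on $X$), and it has the minor merit of being robust against the discrepancy between \S2.4 and the proof in \S6 regarding the complementary-series range; the cost is that you only get $\pi_x^c(\Omega)\le 0$ rather than $<0$, but since $c(\sigma_k)>-1$ for $k\neq 0$ this is enough for the strict inequality the lemma actually claims. Everything else — the bound $\pi_{n,\lambda}(\Omega)\le k^2/4-1$ via the chain $|n|\le l\le|k|$, and the observation that the trivial representation enters only through $\nu_0$ and hence only for $k\in\{0,\pm2\}$ — matches the paper.
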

\begin{proof}
Let $\hat G$ denote the unitary dual of $G$. Let
\begin{equation}\label{regrepr}
L^2(\Gamma\bs G)=\widehat\bigoplus_{\pi\in\hat G} m_\Gamma(\pi)\H_\pi
\end{equation}
be the spectral decomposition of the right regular representation of $G$
on $L^2(\Gamma\bs G)$. Let $(\nu,V_\nu)$ be an irreducible unitary 
representation of $K$.  Then $L^2(X,E_\nu)\cong \left(L^2(\Gamma\bs G)\otimes 
V_\nu\right)^K $. Using \eqref{regrepr}, we get
\begin{equation}\label{specdecomp}
\left(L^2(\Gamma\bs G)\otimes V_\nu\right)^K =\widehat\bigoplus _{\pi\in\hat G} 
m_\Gamma(\pi)\left(\H_\pi\otimes V_\nu\right)^K.
\end{equation}
This decomposition corresponds to the spectral resolution of $A_\nu$ as follows.
Assume that $m_\Gamma(\pi)\dim(\H_\pi\otimes V_\nu)^K\neq 0$. Then 
$m_\Gamma(\pi)(\H_\pi\otimes V_\nu)^K$ is an eigenspace
of $A_\nu$ with eigenvalue $-\pi(\Omega)$. Note that $\hat G$ is the union
of the trivial representation, the unitary principal series $\pi_{k,\lambda}$
with $k\in\Z$ and $\lambda\in\R$, and the complementary series $\pi_{x}^c$
with $0<x<1$ \cite[Proposition 49]{KS}, \cite[Theorem 16.2]{Kn}
(where for the latter reference the different parametrization of the induced
representations has to be taken into account).  

First consider the principal series $\pi_{n,\lambda}$.
By Frobenius reciprocity \cite[p. 208]{Kn} we have for $l\in\N_0$
\begin{equation}\label{dimension}
\dim(\H_{\pi_{k,\lambda}}\otimes V_{\nu_l})^K=[\pi_{k,\lambda}^\vee|_K:\nu_l]=
[\nu_l|_M:\sigma_{-k}]=[\nu_l|_M:\sigma_{k}].
\end{equation}
By \eqref{restrict1} it follows that 
$(\H_{\pi_{k,\lambda}}\otimes V_{\nu_l})^K\neq0$ implies $l\ge k$.
Moreover, by \eqref{restrict2} it follows that $m_{\nu_l}(\sigma_m)\neq 0$ 
implies $m\ge l$.
Thus if $m_{\nu}(\sigma_m)\neq0$ and 
$(\H_{\pi_{k,\lambda}}\otimes V_{\nu})^K\neq0$, then we have $m\ge k$.
Hence if $\nu\in\hat K$ and $\sigma\in\hat M$ are such that 
$m_\nu(\sigma)\neq0$ and $(\H_{\pi_{k,\lambda}}\otimes V_{\nu})^K\neq0$, then
it follows from \eqref{ucasimir} and \eqref{cl} that
\begin{equation}\label{casimir9}
-\pi_{k,\lambda}(\Omega)+c(\sigma)\ge 0.
\end{equation}
Next consider the complementary series. By \eqref{cl} we have $c(\sigma)\ge 
-1$ for all $\sigma\in\hat M$. Since $0< x<1$,  it follows from 
\eqref{casimir8} that
\begin{equation}\label{casimir10}
-\pi_{x}^c(\Omega)+c(\sigma)> -1.
\end{equation}
for all $\sigma\in\hat M$. Finally, the trivial representation of $G$ 
occurs in \eqref{specdecomp} only if $\nu$ is the trivial representation 
$\nu_0$. Moreover, by \eqref{restrict2} we have $m_{\nu_0}(\sigma_l)\neq0$,
only if $l=0$ or $l=2$. Thus by \eqref{casimir9}, \eqref{casimir10}, 
and the definition of $A(\sigma)$ by \eqref{aoperator}, the statement of 
the Lemma follows.
\end{proof}
We apply this lemma to study the kernel of the operator $\Delta(w)$, $w\in W_G$,
 which is defined by \eqref{casimir5}.

\begin{lem}\label{kernel}
Let $\tau$ be an irreducible, finite-dimensional representation of $G$. 
Assume that $\tau_\theta\ncong\tau$. Then $\ker\Delta(w)=\{0\}$ for all 
$w\in W_G$.
\end{lem}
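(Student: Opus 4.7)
The plan is to use the identity \eqref{casimir4}, namely $\Delta(w) = A(\sigma_{\tau,w}) + \lambda_{\tau,w}^2$, and to show that this operator is strictly positive for every $w\in W_G$ whenever $\tau_\theta\ncong\tau$. Since $\Delta(w)$ is self-adjoint, strict positivity gives $\ker\Delta(w)=\{0\}$.

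Write $\tau=\tau_{m,n}$; the hypothesis $\tau_\theta\ncong\tau$ together with \eqref{highestw} means $m\neq n$. The four pairs $(\sigma_{\tau,w},\lambda_{\tau,w})$ are listed in \eqref{characterw1}, and fall into two families: one with $\sigma_{\tau,w}=\sigma_{\pm(m-n)}$ and $\lambda_{\tau,w}^2=((m+n)/2+1)^2$, and one with $\sigma_{\tau,w}=\sigma_{\pm(m+n+2)}$ and $\lambda_{\tau,w}^2=(m-n)^2/4$. In all four cases the parameter index is nonzero (since $m\neq n$ and $m+n+2\geq 2$), so $\sigma_{\tau,w}\neq \sigma_0$.

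In the first family, $m+n\geq 1$ forces $\lambda_{\tau,w}^2\geq 9/4$, and Lemma~\ref{spectrum} gives $A(\sigma_{\tau,w})\geq -1$, so $\Delta(w)\geq 5/4>0$. In the second family, $\lambda_{\tau,w}^2=(m-n)^2/4$; when $|m-n|\geq 3$ we again have $\lambda_{\tau,w}^2\geq 9/4$, so the same argument works. The delicate subcase is $|m-n|\in\{1,2\}$, where $\lambda_{\tau,w}^2\in\{1/4,1\}$ and Lemma~\ref{spectrum}'s bound $A\geq -1$ is too weak. Here I would sharpen the bound to $A(\sigma_k)\geq 0$ for every $k\neq 0$, by inspecting the three contributions to the spectral decomposition used in the proof of Lemma~\ref{spectrum}: the trivial representation contributes only through $\nu_0$, which appears in $\xi_{\sigma_k}$ only when $k=0$ (by \eqref{restrict2}); the principal series already contribute nonnegatively by \eqref{casimir9}; and the complementary series $\pi_x^c$ contributes only when some even $\nu_l$ occurs in $\xi_{\sigma_k}$, forcing $k$ even with $|k|\geq 2$, and then the contribution is $k^2/4-x^2>0$. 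Consequently, for $|m-n|\in\{1,2\}$ we have $|m+n+2|\geq 3$ and $A(\sigma_{\pm(m+n+2)})\geq 0$, so $\Delta(w)\geq \lambda_{\tau,w}^2>0$.

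The only real obstacle is this sharpening of Lemma~\ref{spectrum} in the subcases $|m-n|=1,2$; everything else is a direct substitution into \eqref{characterw1} and \eqref{casimir4}. Combining the two cases yields $\Delta(w)>0$, and hence $\ker\Delta(w)=\{0\}$, for every $w\in W_G$.
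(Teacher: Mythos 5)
Your proof is correct in substance and follows essentially the same strategy as the paper: use the identity $\Delta(w)=A(\sigma_{\tau,w})+\lambda_{\tau,w}^2$ from \eqref{casimir4} and prove strict positivity by case analysis on the size of $\lambda_{\tau,w}$, isolating the delicate subcases $|m-n|\in\{1,2\}$. Where the paper treats $\lambda_{\tau,w}=\pm1$ (via the strict inequality $A(\sigma_{2l})>-1$, $|2l|\ge4$, in Lemma~\ref{spectrum}) and $\lambda_{\tau,w}=\pm1/2$ (via a Frobenius-reciprocity argument excluding the complementary and trivial series) by two separate arguments, you prove a single uniform sharpening $A(\sigma_k)\ge0$ for $k\ne0$ and apply it to both; that is a marginally cleaner packaging of the same spectral input.

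One small slip in your justification of the sharpened bound: you assert that $\nu_0$ appears in $\xi_{\sigma_k}$ only when $k=0$. By \eqref{restrict2}, $\xi_{\sigma_{\pm2}}=\nu_2-\nu_0$, so $m_{\nu_0}(\sigma_{\pm2})=-1\ne0$; $\nu_0$ also occurs for $k=\pm2$. This does not invalidate the claimed inequality $A(\sigma_k)\ge0$ for $k\ne0$, because the trivial representation contributes the eigenvalue $-\pi_{\mathrm{triv}}(\Omega)+c(\sigma_{\pm2})=0+0=0\ge0$; nor does it affect the application, since the second family in your delicate subcase has $\sigma_{\tau,w}=\sigma_{\pm(m+n+2)}$ with $m+n+2\ge3$, where $\nu_0$ genuinely does not occur. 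But as written, the parenthetical appeal to \eqref{restrict2} supports the claim only for $k\notin\{0,\pm2\}$, so the $k=\pm2$ case should be covered explicitly, e.g.\ by noting that $c(\sigma_{\pm2})=0$.
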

\begin{proof}
Let $\tau=\tau_{m,n}$ with $m\neq n$. We use \eqref{casimir4} to express 
$\Delta(w)$ in terms of $A(\sigma_{\tau,w})$. By \eqref{characterw1} we
have $\lambda_{\tau,w}\in\frac{1}{2}\Z\setminus\{0\}$ for all $w\in W_G$. 
If $|\lambda_{\tau,w}|>1$, it follows from \eqref{casimir4} that
$\Delta(w)>0$. It remains to consider the cases $\lambda_{\tau,w}=\pm 1$
and $\lambda_{\tau,w}=\pm 1/2$. In the first case we have $|m-n|=2$. Then it
follows from \eqref{characterw1} that $\sigma_{\tau,w}=\sigma_{2l}$ with 
$|l|\ge 2$. By Lemma \ref{spectrum} we get $\Delta(w)>0$. In the second case
we have $|m-n|=1$. By \eqref{characterw1} it follows that $\sigma_{\tau,w}
=\sigma_{2l+1}$ for some $l\in\Z$. Let $\nu\in\hat K$ such that 
$m_\nu(\sigma_{2l+1})\neq0$. By \eqref{restrict2} there exists $p\in\N_0$ such
that $\nu=\nu_{2p+1}$. Since $\pi^c_x$ is induced from the trivial
 representation and $[\nu_{2p+1}|_M:\sigma_0]=0$, Frobenius reciprocity 
\cite[p. 208]{Kn} implies
\begin{equation}\label{dimension2}
\dim(\H_{\pi^c_x}\otimes V_{\nu_{2p+1}})^K=[\pi^c_x|_K:\nu_{2p+1}]=
[\nu_{2p+1}|_M:\sigma_0]=0.
\end{equation}
Thus in this case the complementary series does not occur
in \eqref{specdecomp}. Also the trivial representation does not occur.
By \eqref{casimir9} it follows that $A(\sigma_{\tau,w})\ge 0$. Using
\eqref{casimir4} we get $\Delta(w)>0$. 
\end{proof}

Now we can turn to the proof of Theorem \ref{theo-ruelle}. First assume that 
$\tau\ncong\tau_\theta$. Then it follows from
\cite[Chapt. VII, Theorem 6.7]{BW} that $H^*(X,E_\tau)=0$. Hence $\Delta_p(\tau)
>0$ for all $p$, $0\le p\le 3$. By Lemma \ref{kernel} we also have 
$\Delta(w)>0$, $w\in W_G$. Hence $K(t,\tau)$ and $\Tr(e^{-t\Delta(w)})$, 
$w\in W_G$, are exponentially decreasing as $t\to\infty$. Therefore we can
take the Mellin transform of both sides of \eqref{anator6} and we get
\[
\frac{1}{\Gamma(s)}\int_0^\infty K(t,\tau)t^{s-1}\;dt=
\frac{1}{2}\sum_{w\in W_G}(-1)^{\ell(w)+1}\frac{1}{\Gamma(s)}\int_0^\infty
\Tr_s\left(e^{-t\Delta(w)}\right) t^{s-1}\,dt,
\]
which holds for $\Re(s)>3/2$. After analytic continuation we compare the 
derivatives at $s=0$ of both sides. Using \eqref{anator11} we get
\begin{equation}\label{anatorvirt}
T_X(\tau)^4=\prod_{p=1}^3\det\left(\Delta_p(\tau)\right)^{2(-1)^{p+1}p}=
\prod_{w\in W_G}\det_{\gr}\left(\Delta(w)\right)^{(-1)^{\ell(w)+1}}.
\end{equation}
Now we use the determinant formula \eqref{seldet1} to relate the right hand
side to the value at zero of the Ruelle zeta function.
Since $\Delta(w)>0$,  it follows that
$\det_{\gr}(s^2-2s\lambda_{\tau,w}+\Delta(w))$ is regular at $s=0$ and
its value at $s=0$ is equal to $\det_{\gr}\left(\Delta(w)\right)\neq0$. Hence
\begin{equation}\label{value1}
\lim_{s\to 0}\prod_{w\in W_G}
\det_{\gr}\left(s^2-2s\lambda_{\tau,w}+\Delta(w)\right)^{(-1)^{\ell(w)+1}}
=\prod_{w\in W_G}\det_{\gr}\left(\Delta(w)\right)^{(-1)^{\ell(w)+1}}.
\end{equation}
By \eqref{ruelledet2} it follows that $R_\tau(s)R_{\tau_\theta}(s)$ is regular
at zero. Now observe that $\overline\tau=\tau_\theta$. Furthermore by 
\eqref{ruelle1} we have
\[
\overline{R_\tau(s)}=R_{\tau_\theta}(\overline s).
\]
This implies that $R_\tau(s)$ is regular at $s=0$ and
\begin{equation}\label{value2}
|R_\tau(0)|^2=\prod_{w\in W_G}\det_{\gr}\left(\Delta(w)\right)^{(-1)^{\ell(w)+1}}.
\end{equation}
Combining \eqref{anatorvirt} and \eqref{value2}, the first statement of 
Theorem \ref{theo-ruelle} follows.

Next assume that $\tau_\theta=\tau$. Then there exists $m\in\N_0$ such that
$\tau=\tau_{m,m}$. We use 
\eqref{ruelledet3} to determine the order of $R_{\tau}(s)$ at $s=0$. For
$m\in\N_0$ let
\begin{equation}
h_m=\dim_{\gr}\ker(A(\sigma_{2m+2})),
\end{equation}
where $\dim_{\gr}$ denotes the graded dimension of a graded vector space, 
i.e., if $V=V^+\oplus V^-$ is 
a graded finite-dimensional vector space, then $\dim_{\gr}V=\dim V^+-\dim V^-$.
Assume that $m\ge 1$. Then $\det\left((s\pm(m+1))^2-1+\Delta\right)$ is 
regular and nonzero at $s=0$. Furthermore, 
$\det_{\gr}\left(s^2+A(\sigma_{2m+2})\right)$ has order $2h_m$ at $s=0$ and
\begin{equation}\label{ruelleord1}
\lim_{s\to 0}s^{-2h_m}\det_{\gr}\left(s^2+
A(\sigma_{2m+2})\right)=\det_{\gr}\left(A(\sigma_{2m+2})\right).
\end{equation}
By \eqref{ruelledet3}, it follows that $R_{\tau}(s)$ has order $-2h_m$
at $s=0$ and we have
\begin{equation}\label{laurent1}
\lim_{s\to0}s^{2h_m}R_{\tau_{m,m}}(s)=
\frac{\det\left((m+1)^2-1+\Delta\right)^2}
{\det_{\gr}\left(A(\sigma_{2m+2})\right)}.
\end{equation}
On the other hand, using \eqref{characterw2}, it follows from 
\eqref{anator6} that
\begin{equation}\label{anator9}
K(t,\tau)=\Tr_s\left(e^{-tA(\sigma_{2m+2})}\right)-
2e^{-t\left((m+1)^2-1\right)}\Tr\left(e^{-t\Delta}\right).
\end{equation}
Taking the limit $t\to\infty$ of both sides of this equality, we get
\begin{equation}\label{ruelleord2}
h_m=\sum_{p=1}^3(-1)^p p\dim\left(\ker\Delta_p(\tau)\right).
\end{equation}
Moreover \eqref{anator9} also implies
\[
T_X(\tau)^2=\prod_{p=1}^3\det\left(\Delta_p(\tau)\right)^{(-1)^{p+1}p}
=\frac{\det\left((m+1)^2-1+\Delta\right)^2}
{\det_{\gr}\left(A(\sigma_{2m+2})\right)}.
\]
Combining this equality with \eqref{laurent1} and \eqref{ruelleord2}, we 
obtain the second statement of Theorem \eqref{theo-ruelle}.

\section{Proof of Theorem \ref{asymptotic}}\label{mainth}

We are now ready to prove our main result.
We consider the representation 
$\tau_m$. Using \eqref{restrict5}, it follows from (\ref{decompo1})that 
\begin{equation}\label{prod1}
R_{\tau_m}(s)=\prod_{k=0}^m R\left(s-\left(m/2-k\right),\sigma_{m-2k}\right).
\end{equation}
We distinguish the cases where $m$ is odd and even. Let $m\ge3$. Then  we get
\begin{equation}\label{prod2}
\begin{split}
R_{\tau_{2m}}(s)&=\prod_{k=0}^{4} R(s-(2-k),\sigma_{4-2k})
\prod_{k=3}^{m} R(s-k,\sigma_{2k})R(s+k,\sigma_{-2k})\\
&=R_{\tau_4}(s)\prod_{k=3}^{m} R(s-k,\sigma_{2k})R(s+k,\sigma_{-2k}).
\end{split}
\end{equation}
Similarly, for $m\ge 2$ we get
\begin{equation}\label{prod3}
R_{\tau_{2m+1}}(s)=
R_{\tau_3}(s)\prod_{k=2}^m R(s-k-1/2,\sigma_{2k+1})R(s+k+1/2,\sigma_{-(2k+1)}).
\end{equation}
Now recall that by Proposition \ref{prop-ruelle}, 1), each $R(s,\sigma_l)$, 
$l\in\Z$, is regular  in the half-plane $\Re(s)>2$ and does not vanish in this
half-plane. By the functional equation \eqref{functequ} the same holds in the 
half-plane $\Re(s)<2$. Therefore the products on the right hand side of 
\eqref{prod2}
and \eqref{prod3} are regular at $s=0$. Furthermore it follows from 
\eqref{ruellezeta} that 
\begin{equation}\label{modulus}
|R(s,\sigma_l)|=|R(\overline s,\sigma_{-l})|.
\end{equation}
Using  \eqref{prod2}, \eqref{prod3} and Theorem \eqref{theo-ruelle} we get
\begin{equation}\label{tors1}
T_X(\tau_{2m})^2=T_X(\tau_4)^2\prod_{k=3}^{m} |R(k,\sigma_{2k})|\cdot 
|R(-k,\sigma_{2k})|,\quad m\ge 3.
\end{equation}
and
\begin{equation}\label{tors2}
T_X(\tau_{2m+1})^2=T_X(\tau_3)^2\prod_{k=2}^{m} |R(k+1/2,\sigma_{2k+1})|\cdot 
|R(-k-1/2,\sigma_{2k+1})|,\quad m\ge 2.
\end{equation}
By the functional equation \eqref{functequ} and \eqref{modulus} we get
\[
|R(-k,\sigma_{2k})|=
\exp\left(-\frac{4}{\pi}\vol(\Gamma\bs\bH^3)k\right)|R(k,\sigma_{2k})|
\]
Together with \eqref{tors1} this leads to 
\begin{equation}\label{tors3}
T_X(\tau_{2m})=T_X(\tau_4)
\prod_{k=3}^m\exp\left(-\frac{2}{\pi}\vol(\Gamma\bs\bH^3)k\right)
 |R(k,\sigma_{2k})|.
\end{equation}
Similarly
\begin{equation}\label{tors4}
T_X(\tau_{2m+1})=T_X(\tau_3)\prod_{k=2}^m
\exp\left(-\frac{2}{\pi}\vol(\Gamma\bs\bH^3)(k+1/2)\right)
 |R(k+1/2,\sigma_{2k+1})|.
\end{equation}

To continue we need the following estimation.
\begin{lem}\label{bound1} 
There exists $C>0$ such that for all $m\in\N$, $m\ge 3$, we have
\[
\sum_{k=3}^m\big|\log |R(k,\sigma_{2k})|\big|\le C,\quad
\sum_{k=2}^m\big|\log |R(k+1/2,\sigma_{2k+1})|\big|\le C.
\]
\end{lem}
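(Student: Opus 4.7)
The plan is to show that each individual term $\bigl|\log|R(k,\sigma_{2k})|\bigr|$ decays exponentially in $k$, so that summing over $k$ gives a convergent series whose total bounds the partial sums uniformly in $m$. The starting point is the logarithmic identity \eqref{logruelle1},
\[
\log R(s,\sigma) \;=\; -\sum_{[\gamma]\neq e}\frac{\sigma(m_\gamma)}{n_\Gamma(\gamma)}\,e^{-s\ell(\gamma)},
\]
which converges absolutely for $\Re(s)>2$. Taking absolute values and using that $\sigma_{2k}$ is a unitary character of $M$, so $|\sigma_{2k}(m_\gamma)|=1$ uniformly in $k$ and in $\gamma$, gives the pointwise estimate
\[
\bigl|\log|R(k,\sigma_{2k})|\bigr| \;=\; \bigl|\Re\log R(k,\sigma_{2k})\bigr| \;\leq\; \sum_{[\gamma]\neq e}\frac{e^{-k\ell(\gamma)}}{n_\Gamma(\gamma)}.
\]

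Next I would bound the right-hand side using the growth estimate \eqref{growth}. Letting $\ell_0 := \min_{\gamma\neq e}\ell(\gamma)>0$ and grouping each nontrivial class with its primitive root, the bound reduces to $\sum_{[\gamma_0]\,\mathrm{prim}} \bigl(-\log(1-e^{-k\ell(\gamma_0)})\bigr)$, which for $k$ large enough (absorbing finitely many initial terms into the constant) is dominated by $2\sum_{[\gamma_0]\,\mathrm{prim}}e^{-k\ell(\gamma_0)}$. A Stieltjes integration by parts against the counting function $N(R):=\#\{[\gamma]:\ell(\gamma)\leq R\}\ll e^{2R}$ from \eqref{growth} then gives, for $k\geq 3$,
\[
\sum_{[\gamma_0]\,\mathrm{prim}}e^{-k\ell(\gamma_0)} \;\leq\; k\int_{\ell_0}^{\infty} e^{-kR}\,N(R)\,dR \;\ll\; \frac{k}{k-2}\,e^{-(k-2)\ell_0} \;\ll\; e^{-k\ell_0/2}.
\]

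Summing this exponentially decaying bound over $k\geq 3$ yields a convergent geometric series, producing the uniform constant $C$ and establishing the first inequality of the lemma. The second inequality follows by the identical argument applied at $s=k+1/2$, noting that $k+1/2\geq 5/2>2$ for $k\geq 2$, so the same convergence estimates apply without modification. There is no real obstacle to this proof; the only point requiring attention is ensuring that all constants are genuinely independent of $k$ and of $\sigma_{2k}$, which is immediate from the unitarity of $\sigma_{2k}$ and from the fact that the estimate \eqref{growth} depends only on $X$.
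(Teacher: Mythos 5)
Your proof is correct, and it rests on the same initial estimate as the paper's (bound $\bigl|\log|R(k,\sigma_{2k})|\bigr|$ via the log-expansion and the unitarity of $\sigma_{2k}$), but you execute the final summation over $k$ differently. The paper interchanges the order of summation: after writing the bound as the double sum $\sum_{[\gamma]\,\text{prim}}\sum_{n\ge 1}\frac{1}{n}e^{-nk\ell(\gamma)}$, it sums the geometric series over $k=3,\dots,m$ for each fixed $(\gamma,n)$, bounds the result by $C_1\,e^{-3n\ell(\gamma)}$ with $C_1=(1-e^{-\delta})^{-1}$, and recognizes the remaining sum as $C_1\log R(3,\sigma_0)^{-1}$ — an explicit, uniform constant. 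You instead bound each individual term $\bigl|\log|R(k,\sigma_{2k})|\bigr|$ by an exponentially decaying function of $k$ via Stieltjes integration against the length-counting function $N(R)\ll e^{2R}$ from \eqref{growth}, then sum the resulting geometric series in $k$. Both routes work; the paper's is shorter and produces an explicit constant without invoking the counting estimate beyond what is already needed for convergence at $s=3$, while yours makes the individual decay rate in $k$ explicit, at the cost of an extra integration-by-parts step. One small nit: your first displayed inequality $\bigl|\log|R(k,\sigma_{2k})|\bigr|\le\sum_{[\gamma]\neq e}\frac{e^{-k\ell(\gamma)}}{n_\Gamma(\gamma)}$ already coincides with the paper's bound $\sum_{\text{prim}}\sum_n\frac{1}{n}e^{-nk\ell}$ after grouping by primitive root, so the subsequent step rewriting it as $\sum_{\text{prim}}(-\log(1-e^{-k\ell(\gamma_0)}))$ and then re-linearizing to $2\sum e^{-k\ell(\gamma_0)}$ is a detour; you could pass directly to the Stieltjes estimate from the first display.
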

\begin{proof} We consider the first case.
Since $|\sigma_{2k}(m_\gamma)|=1$, we have
\[
1-e^{-k\ell(\gamma)}\le \big|1-\sigma_{2k}(m_\gamma)e^{-k\ell(\gamma)}\big|\le
1+e^{-k\ell(\gamma)}.
\]
Let $k\ge 3$. Using that the infinite product (\ref{ruellezeta}) is absolutely 
convergent for $\Re(s)> 2$, we get
\[
\sum_{\substack{[\gamma]\not=e\\\pr}}\log\big(1-e^{-k\ell(\gamma)}\big)\le
\log|R(k,\sigma_{2k})|\le 
\sum_{\substack{[\gamma]\not=e\\\pr}}\log\big(1+e^{-k\ell(\gamma)}\big),
\]
which implies
\begin{equation}\label{bound3}
\big|\log|R(k,\sigma_{2k})|\big|\le \sum_{\substack{[\gamma]\not=e\\\pr}}
\sum_{n=1}^\infty\frac{1}{n}e^{-nk\ell(\gamma)}.
\end{equation}
Let $\delta=\inf\{\ell(\gamma)\colon \gamma\in\Gamma\setminus\{e\}\}$.
Put $C_1=(1-e^{-\delta})^{-1}$. 
Using \eqref{bound3}  we get
\[
\begin{split}
\sum_{k=3}^m\big|\log |R(k,\sigma_{2k})|\big|&\le\sum_{n=1}^\infty \frac{1}{n}
\sum_{\substack{[\gamma]\not=e\\\pr}}\sum_{k=3}^m e^{-\ell(\gamma)nk}\\
&=\sum_{n=1}^\infty \frac{1}{n}\sum_{\substack{[\gamma]\not=e\\\pr}}
\left(\frac{1-e^{-n(m+1)\ell(\gamma)}}{1-e^{-n\ell(\gamma)}}-
\left(1+e^{-n\ell(\gamma)}+e^{-2n\ell(\gamma)}\right)\right)\\
&=\sum_{n=1}^\infty\frac{1}{n} \sum_{\substack{[\gamma]\not=e\\\pr}}
\frac{e^{-3n\ell(\gamma)}-e^{-(m+1)n\ell(\gamma)}}{1-e^{-n\ell(\gamma)}}\\
&\le C_1 \sum_{\substack{[\gamma]\not=e\\\pr}}\sum_{n=1}^\infty
\frac{e^{-3n\ell(\gamma)}}{n}\\ 
&=C_1\log R(3,\sigma_0)^{-1}=C.
\end{split}
\]
The other case is similar.
\end{proof}
Taking the logarithm of both sides of \eqref{tors3} and \eqref{tors4}, 
respectively, we obtain
\[
\begin{split}
\log T_X(\tau_{2m})=\log T_X(\tau_4)+\sum_{k=3}^m\log |R_{2k}(k)|
-\frac{1}{\pi}\vol(\Gamma\bs\bH^3)\left(m(m+1)-6\right).
\end{split}
\]
and 
\[
\begin{split}
\log T_X(\tau_{2m+1})=\log T_X(\tau_3)+\sum_{k=2}^m\log \Bigg|R_{2k+1}
\left(k+\frac{1}{2}\right)\Bigg|
-\frac{1}{\pi}\vol(\Gamma\bs\bH^3)(m(m+2)-3).
\end{split}
\]
Applying Lemma \ref{bound1} we get
\[
-\log T_X(\tau_m)=\frac{1}{4\pi}\vol(\Gamma\bs\bH^3)m^2 + O(m)
\]
as $m\to\infty$.  This completes the proof of Theorem \ref{asymptotic}.

\end{document}